\newcommand{\TT}{\mathbf{T}}
\renewcommand{\SS}{\mathbf{S}}
\renewcommand{\P}{\mathbb{P}}
\newcommand{\e}{\mathbf{e}}
\newtheorem{thm}{Theorem}[section]
\newtheorem{rem}[thm]{Remark}
\newtheorem{Remarks}[thm]{Remarks}
\newcommand{\N}{\mathbb{N}}
\newcommand{\E}{\mathbb{E}}
\newcommand{\calI}{\mathcal{I}}
\newcommand{\DD}{\mathbf{D}}
\newtheorem{Prop}[thm]{Proposition}
\newtheorem{Lemma}[thm]{Lemma}
\newtheorem{Cor}[thm]{Corollary}
\begin{document}

\title[Central Limit Theorems for Random Walks on Free Products]{Central Limit Theorems for Drift and Entropy of Random Walks on Free Products}

\author{Lorenz A. Gilch}

\address{Lorenz A. Gilch: University of Passau, Innstr. 33, 94032 Passau, Germany}

\email{Lorenz.Gilch@uni-passau.de}
\urladdr{http://www.math.tugraz.at/$\sim$gilch/}
\date{\today}
\keywords{free product, random walk, central limit theorem, drift, entropy, renewal times, analyticity}

\maketitle

\begin{abstract}
In this article we consider a natural class of random walks on free products of graphs, which arise as convex combinations of random walks on the single factors. From the works of Gilch \cite{gilch:07,gilch:11} it is well-known that for these random walks the asymptotic entropy as well as the drift w.r.t. the natural transition graph distance and also w.r.t. the word length exist. The aim of this article is to formulate three central limit theorems with respect to both drift definitions and the entropy. In the case that the random walk depends on finitely many parameters we show that the corresponding variances in the central limit theorems  w.r.t. both drifts vary real-analytically in terms of  these parameters, while the variance in the central limit theorem w.r.t. the entropy varies real-analytically at least in the case of free products of finite graphs.
\end{abstract}

\section{Introduction}

Let $V_1, V_2$ be finite or countable, disjoint sets with $|V_i|\geq 2$, and fix distinguished elements $o_i \in V_i$, $i\in \{1,2\}$.
Suppose that each $V_i$ carries a transition matrix $P_i \in [0,1]^{V_i \times V_i}$.
The free product $V := V_1 \ast V_2$ consists of all finite words over the alphabet $(V_1 \cup V_2)\setminus \{o_1,o_2\}$ such that two consecutive letters do not lie in the same $V_i$.
We consider a transient Markov chain $(X_n)_{n \in\N_0}$ on $V$, starting at the empty word $X_0 := o$, with transition law given by a convex combination of the kernels $P_i$. Denote by $\Vert x\Vert$ the word length of $x\in V$ and let $d(o,x)$ denote the distance of $x$ to $o$ w.r.t. the natural graph metric on the transition graph of the random walk on $V$. Then, by \mbox{Gilch \cite{gilch:07},} it is well-known that the (asymptotic) drifts
$$
\lim_{n\to\infty} \frac{\Vert X_n\Vert}{n} \quad \textrm{ and } \quad
\lim_{n\to\infty} \frac{d(o,X_n)}{n}
$$
exist almost surely and both limits are  almost surely constant. Moreover, let $\pi_n$ denote the distribution of $X_n$. In Gilch \cite{gilch:11} it is shown that the (asymptotic) entropy
$$
\lim_{n\to\infty} -\frac1{n}\log \pi_n(X_n)
$$
exists almost surely and the limit is also almost surely constant. The aim of this note is to establish three central limit theorems w.r.t.  the drifts and the entropy. In particular, we will show that if the random walk's law depends on finitely many parameters, then the variances in these central limit theorems vary real-analytically in terms of these  parameters, where analyticity of the variance in the entropy case is restricted  to free products of finite graphs.
\par
Let me outline some results on random walks on free products and their importance.
Random walks on free products have been studied extensively, and there is a broad range of literature.
Asymptotics of return probabilities were elaborated in Gerl and Woess \cite{gerl-woess}, Woess \cite{woess3}, Sawyer \cite{sawyer78}, Cartwright and Soardi \cite{cartwright-soardi}, Lalley \cite{lalley93,lalley:04}, and Candellero and Gilch \cite{candellero-gilch}; explicit drift and entropy formulas for free products of finite groups were obtained by Mairesse and Math\'eus \cite{mairesse1}, while Gilch \cite{gilch:07,gilch:11} calculated later explicit formulas  for the drift and the entropy in the more general, inhomogeneous setting of free products of graphs. 
Spectral properties were analyzed in Shi et al. \cite{sidoravicius:18}.
\par
The relevance of free products in group theory arises from Stallings’ Splitting Theorem (see Stallings \cite{stallings:71}), which characterizes finitely generated groups with more than one geometric end as those admitting a representation as a nontrivial free product by amalgamation or as an HNN extension over a finite subgroup; we note that free products arise as the special case of amalgams over the trivial subgroup.
Most prior works focus on  free products of groups, which are space-homogeneous and admit transitive random walks.
In this article we address the more general setting of free products of graphs, which are lacking such a homogenity.
In particular, we revisit results of \cite{gilch:07,gilch:11} on the drift and the entropy of random walks on free products of graphs, where the random walk's trajectory is decomposed into disjoint  segments via so-called exit  times $(\e_k)_{k\in\N}$. We will use this decomposition in order to derive three central limit theorems in association with the asymp\-totic drift and entropy of $(X_n)_{n\in\N_0}$. With the help of a very detailed understanding of this exit time process, the hard main work of the proofs lies in the rigorous preparation of some setting such that some standard reasoning for deriving central limit theorems can be applied.
\par
The paper is organized as follows: Section \ref{sec:free-products} recalls the definitions of free products and associated random walks on them.
In Section \ref{sec:last-cone-entry-process} we introduce last cone entry times (exit times) and derive a sequence of renewal times $(\TT_k)_{k\in\N_0}$. Finally, in
Sections \ref{sec:CLT}, \ref{sec:CLT-2} and \ref{sec:CLT-3} we drive the proposed central limit theorems and show that the variances vary real-analytically, when the random walk's probability law depends on finitely many parameters only. In the Appendix \ref{app:proof-Dk-iid} 
we outsource some proofs for better fluidity of reading.

\section{Free Products and Random Walks}
\label{sec:free-products}

\subsection{Free Products}

Let $V_1, V_2$ be disjoint, finite or countable sets with $|V_i|\geq 2$ for $i\in\mathcal{I}:=\{1,2\}$. We fix a distinguished element $o_i\in V_i$, called the \textit{root} of $V_i$, 
for each $i\in\mathcal{I}$, and we set $V_i^\times:=V_i\setminus\{o_i\}$.  The  case $|V_1|=|V_2|=2$ will be excluded; see Remark \ref{rem:case-2-2}.(i).  

The \textit{free product} of $V_1$ and $V_2$ is  given by the set
\[
V := V_1 \ast V_2 := \Bigl\{ v_1\ldots v_k \,\Bigl|\,  k\in\mathbb{N}, \; v_\ell \in V_1^\times\cup V_2^\times,\; v_i\in V_j^\times \Rightarrow v_{i+1}\notin V_j^\times \Bigr\}\cup\{o\},
\]
the set of finite words over the alphabet $V_1^\times\cup V_2^\times$ with no two consecutive letters coming from the same $V_j^\times$, where $o$ denotes the empty word.  
\par
A partial composition law is defined as follows: if we have $u=u_1\ldots u_m$, \mbox{$v=v_1\ldots v_n\in V$,} $m,n\in\N$, and $u_m\in V_i^\times$, $v_1\notin V_i^\times$ for some $i\in\mathcal{I}$, then $uv$ is the concatenated word, which is again an element of $V$.  
We write $\delta(u):=i$, and make the convention $uo_j=u=ou=uo$ for $j\in\mathcal{I}\setminus\{\delta(u)\}$.  Furthermore,  the \textit{word length} of $u=u_1\ldots u_m$ is denoted by $\Vert u\Vert :=m$.  
\par
For $u\in V$, the \textit{cone} rooted at $u$ is  given by
\[
C(u):=\{w\in V \mid w \text{ has prefix } u\},
\]
the set of words in $V$ starting with $u$.

\subsection{Random Walks}
\label{subsec:random-walks}
Let $P_i=\bigl(p_i(x,y)\bigr)_{x,y\in V_i}$ be transition matrices on $V_i$, $i\in\mathcal{I}$, such that every $x\in V_i^\times$ is accessible from $o_i$ in finitely many steps with positive probability, that is, $P_i^{n_x}(o_i,x)>0$ for some $n_x\in\N$.  For sake of simplicity, we assume that $p_i(x,x)=0$ for every $i\in\mathcal{I}$ and all $x\in V_i$, and we assume that there exist $j\in\mathcal{I}$, $y\in V_j^\times$ and $n_y\in \N$ with $P_j^{n_y}(y,y)>0$; see Remarks \ref{rem:case-2-2}.(iii).
Fix $\alpha\in(0,1)$, and set $\alpha_1:=\alpha$, $\alpha_2:=1-\alpha$.  
Define the lifted transition matrix $\overline P_i=\bigl(\overline p_i(x,y)\bigr)_{x,y\in V}$ by  
\[
\overline p_i(uv,uw):=p_i(v,w),
\]
if $u=u_1\ldots u_m$ with $u_m\notin V_i$ and $v,w\in V_i$, $i\in\mathcal{I}$; otherwise, we set $\overline p_i(x,y):=0$.
A natural random walk on $V$ is governed by the transition matrix   
\[
P = \bigl(p(x,y)\bigr)_{x,y\in V} := \alpha_1\overline P_1+\alpha_2\overline P_2.
\]
The transition graph $\mathcal{X}$ w.r.t. $P$ arises from the transition graphs $\mathcal{X}_1, \mathcal{X}_2$ w.r.t. $P_1$, $P_2$ in a natural way as follows: take copies of $\mathcal{X}_1$ and $\mathcal{X}_2$ and glue them together at their roots $o_1$ and $o_2$, which becomes the single vertex $o$. Inductively, at each in the previous step newly added vertex \mbox{$u=u_1\ldots u_m\in V\setminus\{o\}$} with $\delta(u)=i\in\calI$  attach a copy of $\mathcal{X}_j$, $j\in\mathcal{I}\setminus\{\delta(u)\}$, where $u$ and the root $o_j$ of the new copy of $\mathcal{X}_j$ are glueded together to one single vertex, which becomes  $u=uo_j$. The vertices $u_{m+1}\in V_j^\times$ of the newly attached copy $\mathcal{X}_j$ become the elements $u_1\ldots u_mu_{m+1}$ in $\mathcal{X}$. See, e.g., Gilch \cite[Example 2.1]{gilch:22} for a graphic visualization.
\par
The transition graph $\mathcal{X}$ gives rise to a natural graph distance $d(\cdot,\cdot)$; that is, for $x,y\in V$, $d(x,y)$ denotes the minimal length of an (oriented) path from $x$ to $y$ in $\mathcal{X}$.
\par
Let $(X_n)_{n\in\N_0}$ describe a random walk on $V$ governed by $P$ with $X_0:=o$. 
The corresponding $n$-step transition probabilities are  denoted by
\[
p^{(n)}(x,y):=\mathbb{P}[X_n=y\mid X_0=x], \qquad x,y\in V,\; n\in\mathbb{N}_0.
\]
Additionally, we write $\mathbb{P}_x[\,\cdot\,]:=\mathbb{P}[\, \cdot\mid X_0=x]$.  
\par
We say that the \textit{random walk's law depends on finitely many parameters} if there are $d\in\N$ and $p_1,\ldots, p_d\in (0,1)$ such that for all $x,y\in V$ with $p(x,y)>0$ there exists $k\in\{1,\ldots,d\}$ such that $p(x,y)=p_k$. In other words, $P$ has only the entries $p_1,\ldots, p_d,0$ and $1$. If we regard  $p_1,\ldots,p_d$ as parameters taking values in $(0,1)$, then
 we denote by $\mathcal{P}_d$ the set of all vectors $(p_1,\ldots,p_d)\in (0,1)$ such that $P$ is a stochastic matrix on $V$  allowing a well-defined random walk on $V$.
\par
An important property is given by the following lemma which states that paths inside a cone can be shifted to paths originating from $o$ in a measure preserving way:
\begin{Lemma}\label{lem:cone-probabilities}
Let be $n\in\mathbb{N}$, $x\in V$, and $v_1,\dots,v_n \in C(x)$. We write $v_i=xu_i$, where $u_i\in V$ does not start with a letter in $V_{\delta(x)}^\times$. Then
\[
\mathbb{P}_x[X_1=v_1,\dots,X_n=v_n]=\mathbb{P}_o[X_1=u_1,\dots,X_n=u_n].
\]
\end{Lemma}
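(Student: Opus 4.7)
The plan is to use the Markov property to reduce to a one-step identity, and then to verify that identity directly from the definition of the lifted transition kernels $\overline P_i$.

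First, setting $v_0 := x$ and $u_0 := o$, the Markov property gives
$\mathbb{P}_x[X_1 = v_1, \ldots, X_n = v_n] = \prod_{j=1}^n p(v_{j-1}, v_j),$
and the analogous factorization holds on the right with the pairs $(u_{j-1}, u_j)$. So the lemma reduces to establishing, for each $j \in \{1, \ldots, n\}$, the one-step identity $p(xu_{j-1}, xu_j) = p(u_{j-1}, u_j).$

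Second, I would verify this identity from the decomposition $P = \alpha_1\overline{P}_1 + \alpha_2\overline{P}_2$ together with the defining relation $\overline p_i(uv, uw) = p_i(v,w)$. The crucial feature is that the lifted kernel $\overline P_i$ depends only on the suffix pair $(v,w) \in V_i \times V_i$ at which the move occurs and not on the common prefix $u$. Thus, if $u_{j-1} \neq o$ and ends in some letter of $V_{\delta(u_{j-1})}^\times$, then $xu_{j-1}$ ends in the same letter; the admissible decompositions used in $\overline P_i$ are identical on both sides after absorbing $x$ into the prefix, and the one-step equality $p(xu_{j-1}, xu_j) = p(u_{j-1}, u_j)$ is immediate. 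The hypothesis that each $u_j$ does not start with a letter in $V_{\delta(x)}^\times$ is exactly what ensures that $xu_j \in V$, so that the concatenations are legitimate.

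The only genuinely case-sensitive step is $u_{j-1} = o$, so that $xu_{j-1} = x$. Here $u_j$ must be one $P$-step away from $o$, hence a single letter in $V_i^\times$ for some $i$, and the first-letter hypothesis forces $i \neq \delta(x)$. Invoking the convention $x = x\,o_i$, the last letter of the prefix $x$ lies in $V_{\delta(x)}^\times$ and in particular not in $V_i$, so that $\overline p_i(x, xu_j) = \overline p_i(x \cdot o_i,\, x \cdot u_j) = p_i(o_i, u_j) = \overline p_i(o, u_j)$. Multiplying by $\alpha_i$ and using that $\overline P_{i'}$ contributes zero from $x$ to $xu_j$ for $i' \neq i$ yields $p(x, xu_j) = p(o, u_j)$, which closes the remaining case. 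Combining the two cases and multiplying over $j$ gives the claim.

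The whole argument is essentially a formal shift-invariance of the lifted kernels under prepending a valid prefix; there is no substantial obstacle, only some bookkeeping around the convention $uo_j = u$ and the first-letter compatibility hypothesis, particularly in the boundary case $u_{j-1} = o$.
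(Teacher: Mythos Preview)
Your argument is correct. The reduction to a one-step identity via the Markov property, followed by the verification that $p(xu_{j-1},xu_j)=p(u_{j-1},u_j)$ directly from the definition of the lifted kernels $\overline P_i$, is exactly the right way to see this; the boundary case $u_{j-1}=o$ is handled cleanly via the convention $x=xo_i$ for $i\neq\delta(x)$.

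As for comparison: the paper does not actually prove this lemma but simply cites \cite[Lemma~3.2]{gilch:22}. Your direct argument is the natural (and essentially the only) proof, so there is nothing substantive to contrast; you have filled in what the paper outsources. One minor stylistic remark: in your first case ($u_{j-1}\neq o$) you might also want to say a word about the sub-case $u_j=o$, where the step is from $xu_{j-1}$ back to $x$; this is covered by your general reasoning (write $x=xo_i$ with $i=\delta(u_{j-1})$), but making it explicit would mirror the care you took in the second case.
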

\begin{proof}
See \cite[Lemma 3.2]{gilch:22}.
\end{proof}

We introduce the following generating functions: for $x,y\in V$, $z\in\mathbb{C}$, $i\in\mathcal{I}$, define
\[
\begin{aligned}
G(x,y|z) &:= \sum_{n\geq 0} p^{(n)}(x,y)\cdot z^n \quad \quad \textrm{(Green function)},\\
L(x,y|z) &:= \sum_{n\geq 0} \mathbb{P}_x\bigl[ X_n=y,  \forall m\in\{1,\ldots,m\}: X_m \neq x \bigr]\cdot z^n,\\
\xi_i(z) &:= \sum_{n\geq 1} \mathbb{P}\bigl[X_n\in V_i^\times, \forall m<n: X_m\notin V_i^\times \bigr]\cdot z^n.
\end{aligned}
\]
Analogously, for $x_i,y_i\in V_i$, let $G_i(x_i,y_i|z)$ and $L_i(x_i,y_i|z)$ denote the corresponding functions on $V_i$ with respect to $P_i$.  
\par
In the following we collect some important equations in relation with these generating functions.
By \cite[Lemma~1.6]{gilch},  we have
\begin{eqnarray}
G(x,y|z) &=& G(x,x|z)\cdot L(x,y|z),\label{eq:G-L} \\
G_i(x_i,y_i|z) &=& G_i(x_i,x_i|z)\cdot L_i(x_i,y_i|z) \quad \textrm{ for } i\in\mathcal{I}, x_i,y_i\in V_i.\label{eq:Gi-Li}
\end{eqnarray}
If every path from $x$ to $y$ has to pass through $w$, then  
\begin{equation}\label{eq:L-L}
L(x,y|z)=L(x,w|z)\cdot L(w,y|z).
\end{equation}
Moreover, by \cite[Proposition 2.7]{gilch}, for $x_i,y_i\in V_i\subset V$,
\begin{equation}\label{eq:G-xi}
L(x_i,y_i|z)=L_i\bigl(x_i,y_i \,\bigl| \,\xi_i(z)\bigr).
\end{equation}
We remark that, for $i\in\mathcal{I}$ and $z>0$,
\begin{equation}\label{lem:4-1equ-1}
 \sum_{x\in V_i^\times} L_i\bigl(o_i,x\,\bigl|\, \xi_i(z)\bigr) \stackrel{(\ref{eq:Gi-Li})}{=} \sum_{x\in V_i^\times} \frac{G_i\bigl(o_i,x\,\bigl|\, \xi_i(z)\bigr)}{G_i\bigl(o_i,o_i\,\bigl|\, \xi_i(z)\bigr)}
\end{equation}
and
\begin{eqnarray}
\sum_{x\in V_i^\times}G_i\bigl(o_i,x\,\bigl|\, \xi_i(z)\bigr)&\leq& \sum_{x\in V_i}\sum_{n\geq 0} p_i^{(n)}(o_i,x)\cdot \xi_i(z)^n \nonumber\\
&=&\sum_{n\geq 0}  \underbrace{\sum_{x\in V_i}p_i^{(n)}(o_i,x)}_{=1} \cdot \xi_i(z)^n
 =\frac{1}{1-\xi_i(z)}.\label{lem:4-1equ-2}
\end{eqnarray}

For $x\in V\setminus\{o\}$ with $\delta(x)=i$, set  
\[
\xi_i:=\xi_i(1)=\mathbb{P}_x\bigl[\exists n\in\mathbb{N}: X_n\notin C(x)\bigr],
\]
which depends only on $i$ but not on $x$ itself due to the recursive structure of free products.  
By \cite[Lemma~2.3]{gilch:07}, we have $\xi_i<1$, hence
\[
\mathbb{P}_x\bigl[ \forall n\in\mathbb{N}: X_n\in C(x)\bigr]=1-\xi_i>0.
\]

As a \textit{basic assumption} we assume throughout this article that the spectral radius at $o$ satisfies  
\[
\varrho:=\limsup_{n\to\infty}p^{(n)}(o,o)^{1/n}<1,
\]
or equivalently, we assume that the Green function $G(o,o|z)$ has radius of convergence strictly bigger than $1$. This guarantees that all generating functions $G(x,y|z)$, $L(x,y|z)$, and $\xi_i(z)$ have radii of convergence strictly bigger than $1$; see Woess \cite[Proposition 9.18]{woess}. In particular,  $(X_n)$ is transient under this assumption.

\begin{Remarks}\label{rem:case-2-2}
\begin{enumerate}[label=(\roman*)]
\item If $|V_1|=|V_2|=2$ and $P_1,P_2$ are irreducible, then the random walk is recurrent, which is easy to check.
\item If one out of $P_1,P_2$ is not irreducible, then $\varrho<1$, which is also easy to see. 
If $P_1$ and $P_2$ are irreducible and reversible, then we have again $\varrho<1$; see \cite[Theorem 10.3]{woess}.  
\item The assumption $p_i(x,x)=0$ for every $i\in\mathcal{I}$ and $x\in V_i$ is just for presentational reason in order to avoid lengthy case distinctions, which does not affect the validity of the results at all. The assumption on existence of $j\in\mathcal{I}$, $y\in V_j^\times$ and $n_y\in\N$ with $p_j(o_j,y)>0$ ensures that the variances in our central limit theorems are non-zero. However, this assumption is also just for sake of simplicity and can be weakened, but it may not  be dropped completely.
\end{enumerate}
\end{Remarks}

The \textit{rate of escape} or \textit{drift} of $(X_n)_{n\in\N_0}$ is given by the almost sure constant limit 
$$
\lambda = \lim_{n\to\infty} \frac{d(o,X_n)}{n}.
$$
In  \cite[Corollary 4.2]{gilch} it is shown that $\lambda>0$ exists and it is almost surely constant. Moreover, there exists also a constant $\ell>0$ such that
$$
\ell =\lim_{n\to\infty} \frac{\Vert X_n\Vert}{n} \quad \textrm{ almost surely};
$$
see \cite[Theorem 3.3]{gilch:07}. This limit is called \textit{rate of escape w.r.t. the block/word length}.
\par
For $\varepsilon_0>0$, we say that $P$ is \textit{$\varepsilon_0$-uniform} if 
$$
p(x, y) > 0 \quad \Longrightarrow \quad  p(x, y) \geq \varepsilon_0 >0.
$$
Denote by $\pi_n$ the distribution of $X_n$. If  $P$ is $\varepsilon_0$-uniform, then  \mbox{\cite[Theorem 3.7]{gilch:11}} guarantees existence of  a real number $h>0$ such that
$$
h=\lim_{n\to\infty} -\frac1n \log \pi_n(X_n) \quad \textrm{ almost surely};
$$
This limit is called the \textit{asymptotic entropy} of $(X_n)_{n\in\N_0}$.
\par
The aim of this article is to prove the following three central limit theorems, where $(\TT_k)_{k\in\N_0}$ is a sequence of renewal times, which we will define in (\ref{equ:def-Tk}) in Section \ref{sec:last-cone-entry-process}.
\begin{thm}\label{th:CLT}
Assume that $G(o,o|z)$ has radius of convergence strictly bigger than $1$. Then:
$$
\frac{d(o,X_n)-n\cdot \lambda}{\sigma_\lambda\cdot \sqrt{n}} \xrightarrow{\mathcal{D}} N(0,1),
$$
where
$$
\sigma_\lambda=\frac{\E\Bigl[\Bigl(d(X_{\TT_0},X_{\TT_1})-(\TT_1-\TT_0)\cdot\lambda\Bigr)^2\Bigr]}{\E[\TT_1-\TT_0]}.
$$
Moreover, if the random walk's law depends on finitely many parameters $p_1,\ldots, p_d\in(0,1)$, then the mapping
$$
\mathcal{P}_d \ni (p_1,\ldots,p_d) \mapsto \sigma_\lambda=\sigma_\lambda(p_1,\ldots,p_d)
$$
varies real-analytically.
\end{thm}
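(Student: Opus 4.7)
The plan is to decompose the trajectory via the renewal times $(\TT_k)$ from Section~\ref{sec:last-cone-entry-process} into i.i.d.\ blocks, to apply the classical CLT to these blocks, and then to transfer the result to $d(o,X_n)$ by a random-index argument.

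First I would verify that the increments
\[
L_k := \TT_{k+1}-\TT_k, \qquad D_k := d\bigl(X_{\TT_k},X_{\TT_{k+1}}\bigr), \qquad k\geq 0,
\]
form an i.i.d.\ sequence. The defining property of $\TT_k$ is that at that time the walk enters the cone $C(X_{\TT_k})$ and never leaves it again; combined with Lemma~\ref{lem:cone-probabilities}, which translates the law inside any cone back to the law starting at $o$, this yields that the joint distribution of $(L_k,D_k)$ is independent of $k$ and that the blocks are mutually independent. The formal argument is outsourced to Appendix~\ref{app:proof-Dk-iid}. Exponential-tail bounds for $L_0$, and hence for $D_0\leq L_0$, follow from the basic assumption that $G(o,o|z)$ has radius of convergence strictly bigger than $1$: the renewal increments are dominated by consecutive cone-exit times, whose generating functions are built from $\xi_i(z)$ and remain analytic past $z=1$.

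Having an i.i.d.\ sequence with finite second moments at hand, the classical CLT yields
\[
\frac{1}{\sqrt{N}}\sum_{k=0}^{N-1}\bigl(D_k-\lambda L_k\bigr)\xrightarrow{\mathcal{D}}N\bigl(0,\E\bigl[(D_0-\lambda L_0)^2\bigr]\bigr),
\]
where $\E[D_0-\lambda L_0]=0$ follows from $d(o,X_{\TT_N})/\TT_N\to\lambda$ combined with the strong law of large numbers. Setting $N(n):=\max\{k\colon \TT_k\leq n\}$, the elementary renewal theorem gives $N(n)/n\to 1/\E[L_0]$ almost surely, and Anscombe's random-index CLT converts the display above into the limit law for $\sum_{k=0}^{N(n)-1}(D_k-\lambda L_k)/\sqrt{n}$ with the announced variance $\E[(D_0-\lambda L_0)^2]/\E[L_0]$. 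Finally the remainders $d(o,X_n)-d(o,X_{\TT_{N(n)}})$ and $\lambda(n-\TT_{N(n)})$ are each dominated by $L_{N(n)+1}$ and are $o(\sqrt{n})$ in probability by the exponential tail of $L_0$, completing the CLT.

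For the real-analyticity, the generating functions $\xi_i(z)$, $L_i(\cdot,\cdot|z)$ and $G_i(\cdot,\cdot|z)$ depend jointly real-analytically on $(p_1,\dots,p_d,z)$ in a neighbourhood of $(p_1,\dots,p_d,1)$ for every $(p_1,\dots,p_d)\in\mathcal{P}_d$, since the condition $\varrho<1$ is open in the parameters. Assembling from these a bivariate generating function for the joint law of $(L_0,D_0)$ according to the cone construction of $\TT_1$ exhibits $\E[L_0]$, $\E[L_0^2]$, $\E[D_0L_0]$ and $\E[D_0^2]$ as evaluations at $z=1$ of first and second derivatives of real-analytic functions, and hence as real-analytic functions of $(p_1,\dots,p_d)$. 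Real-analyticity of $\lambda$ in the parameters has already been established in \cite{gilch:07}. Since $\E[L_0]>0$ throughout $\mathcal{P}_d$, the resulting ratio $\sigma_\lambda$ is real-analytic.

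The main technical obstacle I anticipate is the explicit description of the joint generating function of $(L_0,D_0)$ in terms of the single-factor objects $\xi_i$, $L_i$ and $G_i$: a renewal block is not a single cone excursion but a concatenation of several exit and return pieces, and keeping track of graph-distance contributions versus word-length contributions inside each factor requires systematic use of the identities \eqref{eq:G-xi} and \eqref{eq:L-L}. Once this combinatorial description is in place, extracting moments as derivatives at $z=1$ and verifying $\E[L_0]>0$ are routine, the latter being guaranteed by the non-degeneracy condition recalled in Remark~\ref{rem:case-2-2}(iii).
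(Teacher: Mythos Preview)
Your argument for the CLT itself is essentially the paper's: decompose along the renewal times $(\TT_k)$, apply the classical CLT to the i.i.d.\ centred increments $D_k-\lambda L_k$, pass to the random index $N(n)$ via Anscombe/Billingsley, and absorb the overshoot using the exponential tails of the renewal increments. (Two minor corrections: the remainder needs both $\TT_0$ and $\TT_{N(n)+1}-\TT_{N(n)}$, not $L_{N(n)+1}$; and analyticity of $\lambda$ is not in \cite{gilch:07} but is obtained here from Corollary~\ref{cor:ell-formula} together with Propositions~\ref{prop:T1-T0-analytic} and~\ref{prop:Dk-iid}(iii).)

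The real-analyticity part, however, diverges from the paper and has a genuine gap. You propose to build a bivariate generating function for $(L_0,D_0)$ out of $\xi_i$, $L_i$, $G_i$ using \eqref{eq:G-xi} and \eqref{eq:L-L}, and then differentiate. But those identities, and the single-factor functions themselves, encode only the \emph{time} variable; none of them carry information about the graph distance $d(o,\cdot)$. To track $D_0$ you would need distance-weighted sums of the form $\sum_{x\in V_i^\times} L_i(o_i,x\mid z)\,w^{d_i(o_i,x)}$, and for infinite factors there is no way to express these in the algebra generated by $\xi_i$, $L_i$, $G_i$. So the ``assembling'' step you flag as the main obstacle is not merely combinatorially heavy---it is not achievable from the ingredients you list.

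The paper avoids this entirely by a more elementary device. Since each $\P[D_0=N,\,L_0=n]$ comes from paths of length exactly $n$, it is a polynomial in $p_1,\dots,p_d$, homogeneous of degree $n$, with nonnegative integer coefficients (the form \eqref{equ:form-n-path}). The pointwise bound $D_0\le L_0$ then gives
\[
\sum_{n,N} N^j\,\P[D_0=N,\,L_0=n]\,(1+\delta)^n \;\le\; \sum_{n} n^j\,\P[L_0=n]\,(1+\delta)^n,
\]
which is finite by the exponential moments of $L_0$. This establishes analyticity of $\E[D_0]$, $\E[D_0^2]$ and $\E[D_0 L_0]$ directly, with no bivariate generating function and no closed-form expression in the single-factor data.
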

We refer to the beginning of Section 5 in \cite{gilch:22}, where more details and explanations concerning real-analyticity in terms of $p_1,\ldots,p_d$  are given.
\par
An analogous central limit theorem holds for the rate of escape w.r.t. the block length:
\begin{thm}\label{th:CLT-2}
Assume that $G(o,o|z)$ has radius of convergence strictly bigger than $1$. Then:
$$
\frac{\Vert X_n\Vert-n\cdot \ell}{\sigma_\ell\cdot \sqrt{n}} \xrightarrow{\mathcal{D}} N(0,1),
$$
where
$$
\sigma_\ell=\frac{\E\bigl[\bigl(2-(\TT_1-\TT_0)\cdot\ell\bigr)^2\bigr]}{\E[\TT_1-\TT_0]}.
$$
Moreover, if the random walk's law depends on finitely many parameters $p_1,\ldots, p_d\in(0,1)$, then the mapping
$$
\mathcal{P}_d \ni (p_1,\ldots,p_d) \mapsto \sigma_\ell=\sigma_\ell(p_1,\ldots,p_d)
$$
varies real-analytically.
\end{thm}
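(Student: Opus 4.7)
The plan is to adapt the renewal-theoretic approach that underlies Theorem \ref{th:CLT} and exploit the fact that the word length $\Vert X_n\Vert$ increases by a \emph{deterministic} amount between consecutive renewal times $\TT_k$. The construction of the renewal times $(\TT_k)_{k\in\N_0}$ in Section \ref{sec:last-cone-entry-process} is based on last cone entries, so between $\TT_k$ and $\TT_{k+1}$ the walk permanently enters a new cone rooted at a word that is exactly one letter longer than at the previous renewal; iterating across the alternation of $V_1$- and $V_2$-blocks, one obtains
\[
\Vert X_{\TT_{k+1}}\Vert - \Vert X_{\TT_k}\Vert = 2 \quad \textrm{almost surely},
\]
while the time increments $D_k := \TT_{k+1}-\TT_k$ form an i.i.d.\ sequence, which will have been established on the way to proving Theorem \ref{th:CLT} (see Appendix \ref{app:proof-Dk-iid}). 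This explains the appearance of the constant $2$ in the formula for $\sigma_\ell$.

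Setting $K(n):=\max\{k\in\N_0 \mid \TT_k\leq n\}$, one then decomposes
\[
\frac{\Vert X_n\Vert - n\ell}{\sqrt n} \,=\, \frac{\Vert X_n\Vert - \Vert X_{\TT_{K(n)}}\Vert}{\sqrt n} \,+\, \frac{2K(n) + \Vert X_{\TT_0}\Vert - n\ell}{\sqrt n}.
\]
The first summand is negligible in probability: since $n-\TT_{K(n)} \leq D_{K(n)+1}$ and since the radius of convergence of $G(o,o|z)$ strictly exceeding $1$ transfers (via the cone factorization encoded by $\xi_i(z)$) to exponential tails of $D_0$, the word-length fluctuation on the last incomplete block is tight. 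For the second summand I would invoke the classical renewal central limit theorem: from $\ell=2/\E[D_0]$ and $K(n) \leq k \Longleftrightarrow \TT_k \geq n$, the CLT for partial sums $\TT_k=\sum_{j<k} D_j$ together with Anscombe's theorem yields
\[
\frac{2K(n) - n\ell}{\sqrt n} \xrightarrow{\mathcal D} N\bigl(0,\sigma_\ell^2\bigr), \qquad \sigma_\ell^2 = \frac{\E\bigl[(2-D_0\ell)^2\bigr]}{\E[D_0]},
\]
which matches the claimed variance. Combining both terms via Slutsky concludes the CLT part.

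For the real-analyticity statement, I would express $\E[D_0]$ and $\E[D_0^2]$ as the first two derivatives at $z=1$ of a probability generating function $\E\bigl[z^{D_0}\bigr]$. By Lemma \ref{lem:cone-probabilities} and the renewal construction, this generating function can be written as a rational combination of the basic generating functions $G_i(o_i,o_i|\xi_i(z))$, $L_i(o_i,x|\xi_i(z))$ and $\xi_i(z)$ evaluated in a neighbourhood of $z=1$ where all of them are analytic (thanks to the radius-of-convergence assumption). Since in the parametrised setting each of these functions depends real-analytically on $(p_1,\ldots,p_d)\in\mathcal P_d$ (this is exactly the analyticity machinery already used in Theorem \ref{th:CLT} and in \cite{gilch:22}), so do $\E[D_0]$ and $\E[D_0^2]$, and hence so does $\sigma_\ell$.

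The main obstacle is not the CLT itself, which is a direct application of standard renewal theory once the block structure is in place, but rather the structural step: rigorously verifying that the renewal times produced by the last-cone-entry process make the word-length increments deterministically equal to $2$, that the $D_k$ are i.i.d.\ with exponentially decaying tails, and that the generating function of $D_0$ admits a clean closed-form expression in terms of the factor generating functions. All of these rest on the careful combinatorial analysis of the exit time process $(\e_k)_{k\in\N}$ developed in Section \ref{sec:last-cone-entry-process} and on the analyticity of $\xi_i(z)$ past $z=1$, which is where the assumption on the spectral radius is crucially used.
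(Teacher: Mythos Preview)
Your proposal is correct and follows essentially the same route as the paper. The paper likewise observes that $\Vert X_{\TT_k}\Vert-\Vert X_{\TT_{k-1}}\Vert=2$ deterministically, applies a random-index CLT (Billingsley, Theorem~14.4, which is Anscombe-type) to the centered i.i.d.\ summands $2-(\TT_k-\TT_{k-1})\ell$, controls the overshoot $\Vert X_n\Vert-\Vert X_{\TT_{\mathbf{k}(n)}}\Vert$ via the exponential tails of $\TT_1-\TT_0$, and deduces real-analyticity of $\sigma_\ell^2$ from that of $\E[\TT_1-\TT_0]$ and $\E[(\TT_1-\TT_0)^2]$; the only cosmetic difference is that the paper obtains the latter analyticity by writing $\P[\TT_1-\TT_0=n]$ as a polynomial in $p_1,\dots,p_d$ with nonnegative coefficients (Proposition~\ref{prop:T1-T0-analytic}) rather than by exhibiting an explicit closed-form rational expression for $\E[z^{\TT_1-\TT_0}]$.
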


If $X_{\TT_0}=x\in V$ and $X_{\TT_1}=xy_1x_1$ with $x_1\in V_1^\times$ and $y_1\in V_2^\times$, then we set $\mathbf{W}_1:=y_1x_1$. With this notation 
we also have the following central limit theorem related with the asymptotic entropy:
\begin{thm}\label{th:CLT-3}
Assume that $P$ is $\varepsilon_0$-uniform for some $\varepsilon_0>0$ and that $G(o,o|z)$ has radius of convergence strictly bigger than $1$. Then:
$$
\frac{-\log \pi_n(X_n)-n\cdot h}{\sigma_h\cdot \sqrt{n}} \xrightarrow{\mathcal{D}} N(0,1),
$$
where
$$
\sigma_h=\frac{\E\Bigl[\Bigl(-\log L(o,\mathbf{W}_1|1)-(\TT_1-\TT_0)\cdot h\Bigr)^2\Bigr]}{\E[\TT_1-\TT_0]}.
$$
\end{thm}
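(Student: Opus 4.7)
The proof will follow the renewal-decomposition strategy used for Theorems~\ref{th:CLT} and \ref{th:CLT-2}, applied now to the blockwise log-weights $\NN_k:=-\log L(o,\W{k}|1)$. Since in a free product every path from $o$ to $X_{\TT_k}$ must pass through each of the intermediate renewal vertices $X_{\TT_1},\dots,X_{\TT_{k-1}}$, Equation~\eqref{eq:L-L} together with Lemma~\ref{lem:cone-probabilities} yields the multiplicative factorization
$$
L(o,X_{\TT_k}|1)=\prod_{j=1}^{k}L(o,\W{j}|1),\qquad\text{hence}\qquad -\log L(o,X_{\TT_k}|1)=\sum_{j=1}^{k}\NN_j.
$$
By the strong renewal property established in Section~\ref{sec:last-cone-entry-process} (whose proof is given in Appendix~\ref{app:proof-Dk-iid}), the pairs $(\TT_k-\TT_{k-1},\NN_k)_{k\geq 1}$ form an i.i.d.\ sequence, and the entropy formula from \cite{gilch:11} reads $h=\E[\NN_1]/\E[\TT_1-\TT_0]$, so that $Z_k:=\NN_k-h(\TT_k-\TT_{k-1})$ is centered.

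The main and only genuinely new step, and hence the main obstacle, is to show that
$$
\bigl|-\log\pi_n(X_n)+\log L(o,X_n|1)\bigr|=o(\sqrt n)\quad\text{almost surely.}
$$
The upper bound is immediate from $\pi_n(X_n)=p^{(n)}(o,X_n)\leq G(o,X_n|1)=G(o,o|1)\cdot L(o,X_n|1)$, giving error $O(1)$. For the matching lower bound, let $k(n):=\max\{k:\TT_k\leq n\}$. The hypothesis that $G(o,o|z)$ has radius of convergence strictly bigger than $1$ forces $\TT_1-\TT_0$ to have exponential tails (a fact exploited throughout Section~\ref{sec:last-cone-entry-process}), so that the overshoot satisfies $n-\TT_{k(n)}=O(\log n)$ almost surely. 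Combined with $\varepsilon_0$-uniformity, this allows one to insert the realized path from $X_{\TT_{k(n)}}$ to $X_n$ of length $n-\TT_{k(n)}$, which contributes a factor at least $\varepsilon_0^{n-\TT_{k(n)}}$, and reduces the claim to a logarithmic-scale comparison of $p^{(\TT_{k(n)})}(o,X_{\TT_{k(n)}})$ with $L(o,X_{\TT_{k(n)}}|1)$. This comparison can be handled by the same type of estimates used to establish the LLN for the entropy in \cite{gilch:11}.

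With this reduction in hand, the CLT follows exactly as for Theorems~\ref{th:CLT} and \ref{th:CLT-2}. Writing
$$
-\log\pi_n(X_n)-nh=\sum_{j=1}^{k(n)}Z_j+h\bigl(\TT_{k(n)}-n\bigr)+o(\sqrt n),
$$
the boundary term is of order $O(\TT_{k(n)+1}-\TT_{k(n)})=o(\sqrt n)$ almost surely by the exponential tail of the renewal increments. The $\varepsilon_0$-uniformity further yields the deterministic domination $\NN_j\leq(-\log\varepsilon_0)(\TT_j-\TT_{j-1})$, so $\NN_j$ inherits exponential moments from $\TT_j-\TT_{j-1}$; consequently $(Z_j)_{j\geq 1}$ is an i.i.d., centered sequence of finite variance. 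Anscombe's theorem combined with the strong law $k(n)/n\to 1/\E[\TT_1-\TT_0]$ then yields
$$
\frac{1}{\sqrt n}\sum_{j=1}^{k(n)}Z_j\xrightarrow{\mathcal{D}}N\!\left(0,\frac{\E[Z_1^2]}{\E[\TT_1-\TT_0]}\right),
$$
which is precisely the stated central limit theorem with the given formula for $\sigma_h$. All analytic difficulty is concentrated in the intermediate comparison step; the remaining arguments are direct adaptations of those developed for the two preceding theorems.
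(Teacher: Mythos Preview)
Your renewal-and-Anscombe strategy matches the paper's Section~\ref{sec:CLT-3} exactly: decompose along the times $\TT_k$, show the increments $\NN_k=-\log L(o,\W{k}|1)$ are i.i.d.\ with exponential moments (via $\varepsilon_0$-uniformity and the exponential tail of $\TT_1-\TT_0$), center them, and conclude by a random-index CLT together with Slutsky for the boundary terms.

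The one substantive divergence is the passage from $-\log\pi_n(X_n)$ to $d_L(o,X_n):=-\log L(o,X_n|1)$, which you flag as ``the main and only genuinely new step.'' In fact the paper's proof \emph{does not carry out this step at all}: Section~\ref{sec:CLT-3} literally concludes with a CLT for $d_L(o,X_n)$, not for $-\log\pi_n(X_n)$, invoking only the LLN-level identity $h=\lim_n d_L(o,X_n)/n$ from \cite{gilch:11}. So you have spotted something the paper leaves implicit.

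That said, your own treatment of this step is not a proof. The inequality $\pi_n(X_n)\leq G(o,o|1)\,L(o,X_n|1)$ indeed gives one direction with $O(1)$ error. For the other direction, your reduction---insert the realized path after $\TT_{\mathbf{k}(n)}$ and then compare $p^{(\TT_{\mathbf{k}(n)})}(o,X_{\TT_{\mathbf{k}(n)}})$ with $L(o,X_{\TT_{\mathbf{k}(n)}}|1)$---is circular: the residual comparison is the original problem at a random earlier time, and appealing to ``the same type of estimates used in \cite{gilch:11}'' does not close it. A rigorous lower bound on $p^{(n)}(o,X_n)/L(o,X_n|1)$ requires genuinely additional input (e.g.\ control of the first-passage time to $X_n$ together with a return-probability estimate for the slack). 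Minor point: your factorization $L(o,X_{\TT_k}|1)=\prod_j L(o,\W{j}|1)$ drops the initial factor $L(o,X_{\TT_0}|1)$, since $X_{\TT_0}\neq o$; compare \eqref{equ:dL-decomposition}.
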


If we consider free products of \textit{finite} graphs then we also have the following:
\begin{Cor}\label{cor:CLT-3}
Under the assumptions of Theorem \ref{th:CLT-3} and if the random walk's law depends on finitely many parameters $p_1,\ldots, p_d\in(0,1)$, then the mapping
$$
\mathcal{P}_d \ni (p_1,\ldots,p_d) \mapsto \sigma_h=\sigma_h(p_1,\ldots,p_d)
$$
varies real-analytically.
\end{Cor}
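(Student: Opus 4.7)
The plan is to expand the square appearing in the definition of $\sigma_h$ and reduce real-analyticity to that of finitely many analytic building blocks, in the same spirit as the arguments that yield Theorems \ref{th:CLT} and \ref{th:CLT-2}. Writing
$$
\sigma_h\cdot \E[\TT_1-\TT_0] = \E\bigl[\log^2 L(o,\mathbf{W}_1|1)\bigr] + 2h\cdot \E\bigl[(\TT_1-\TT_0)\cdot \log L(o,\mathbf{W}_1|1)\bigr] + h^2\cdot \E\bigl[(\TT_1-\TT_0)^2\bigr],
$$
the denominator $\E[\TT_1-\TT_0]>0$ together with $\E[(\TT_1-\TT_0)^2]$ will already have been shown to depend real-analytically on $(p_1,\ldots,p_d)\in\mathcal{P}_d$ during the proofs of Theorems \ref{th:CLT} and \ref{th:CLT-2}, while real-analyticity of $h$ on $\mathcal{P}_d$ is available from \cite{gilch:22}. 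The genuinely new task is therefore to prove real-analyticity of the two terms containing $\log L(o,\mathbf{W}_1|1)$.

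This is where the finiteness of $V_1,V_2$ enters decisively. Under that hypothesis, $\mathbf{W}_1=y_1x_1$ ranges over a finite set $\mathcal{W}\subset V$, since both $V_1^\times$ and $V_2^\times$ are finite. Conditioning on $\mathbf{W}_1$ then converts the two expectations into finite linear combinations
$$
\sum_{w\in\mathcal{W}}\bigl(\log L(o,w|1)\bigr)^2\cdot \P[\mathbf{W}_1=w] \quad\text{and}\quad \sum_{w\in\mathcal{W}}\log L(o,w|1)\cdot \E\bigl[(\TT_1-\TT_0)\,\mathbf{1}_{\{\mathbf{W}_1=w\}}\bigr].
$$
For each $w\in\mathcal{W}$, the coefficient $L(o,w|1)$ is real-analytic in $(p_1,\ldots,p_d)$ by (\ref{eq:G-xi}) together with the known analyticity of $\xi_i(1)$ and of the finite-dimensional factor Green and first-passage functions, it is strictly positive by irreducibility on the finite factors, and it is strictly less than $1$ by transience; composing with $\log$ therefore again yields a real-analytic function. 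The joint weights $\P[\mathbf{W}_1=w]$ and $\E[(\TT_1-\TT_0)\mathbf{1}_{\{\mathbf{W}_1=w\}}]$ arise as the value at $z=1$, respectively the $z$-derivative at $z=1$, of a refinement of the generating function for $\TT_1-\TT_0$ constructed in Sections \ref{sec:last-cone-entry-process}, \ref{sec:CLT} and \ref{sec:CLT-2}: fixing $\mathbf{W}_1=w$ only prescribes the identities of the first two new letters, which amounts to inserting specific entries of the factor kernels $P_1,P_2$ at two designated positions, a polynomial modification that preserves real-analyticity.

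The main obstacle, and the reason the statement is restricted to finite factors, is exactly the logarithm. If $V_1$ or $V_2$ were infinite, then $\mathcal{W}$ would be infinite and there would in general be no uniform lower bound on $L(o,w|1)$ as $w$ ranges over $\mathcal{W}$, so the decomposition above would become an infinite series with unbounded logarithmic coefficients and would require delicate uniform analytic control to conclude analyticity of the sum. Finiteness of $V_1,V_2$ sidesteps this entirely: $\mathcal{W}$ is finite, so one automatically obtains a single strictly positive lower bound on $L(o,w|1)$ that is uniform in $w\in\mathcal{W}$ on a neighbourhood of any given parameter vector, and only finitely many analytic terms enter. Substituting all pieces back into the expanded identity for $\sigma_h\cdot \E[\TT_1-\TT_0]$ and dividing by the real-analytic, strictly positive denominator $\E[\TT_1-\TT_0]$ then yields the corollary.
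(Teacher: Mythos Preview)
Your proposal is correct and follows essentially the same route as the paper. Both expand the square in $\sigma_h$, invoke the already-established real-analyticity of $\E[\TT_1-\TT_0]$ and $\E[(\TT_1-\TT_0)^2]$, and identify the treatment of the $\log L(o,\mathbf{W}_1|1)$ terms as the only new ingredient, which is handled using finiteness of the factors. The sole presentational difference is that you first split the expectation into a finite sum over $w\in\mathcal{W}$ and argue term by term, whereas the paper keeps the sum intact and instead uses finiteness to extract a uniform bound $|\log L(o,y_1x_1|1)|\le M_0$ on a common complex neighbourhood of the parameter vector; the two arguments are equivalent. One minor remark: the claim that $L(o,w|1)<1$ ``by transience'' is neither needed nor obviously true in general; all that matters for composing with $\log$ is $L(o,w|1)>0$, which follows from accessibility.
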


\section{Last Cone Entry Times}
\label{sec:last-cone-entry-process}

The main idea is  to decompose the random walk's trajectory into i.i.d. pieces, which allow us to derive the proposed central limit theorems. 
For this purpose, we want to track the random walk's trajectory to ``infinity'', that is, the way how $(X_n)_{n\in\N_0}$ converges to some boundary point, which we do not specify closer.
For $k\in \N$, define the \textit{$k$-th last cone entry time} (or \textit{exit time}) as
$$
\mathbf{e}_k := \inf \Bigl\{ m>0 \,\Bigl|\, \Vert X_m\Vert =k, \forall n\geq m: X_n\in C(X_{m})\Bigr\}.
$$
In other words, the random time $\mathbf{e}_k$ is the first instant of time from which on the random walk remains in the cone $C(X_{\mathbf{e}_k})$, that is, from time $\e_k$ on the first $k$ letters of $X_n$ remain unchanged. 
In particular, we have \mbox{$X_{\e_{k}-1}\notin C(X_{\e_k})$.} These last cone entry times have been used in an essential way in \cite{gilch:07,gilch:11,gilch:22}. In \cite[Proposition 2.5]{gilch:07} it is shown that $\lim_{n\to\infty}\Vert X_n\Vert =\infty$ almost surely, which implies that $\mathbf{e}_k<\infty$ almost surely for every $k\in\N$. Hence, we obtain a sequence of nested cones $C(X_{\mathbf{e}_k})\supset C(X_{\mathbf{e}_{k+1}})$, $k\in\N$, which are successively finally  entered by the random walk without any further exits; this sequence tracks the random walk's path to ``infinity''.
%
\par
In the following we  are interested just in those exit times $\e_{k}$ 
when the random walk finally enters a cone $C(X_{\e_k})$ with $\delta(X_{\e_k})=1$. 
Since the letters of a word in $V$ arise alternatingly from $V_1$ and $V_2$ 
we either must have 
$$
\delta(X_{\e_1})=\delta(X_{\e_3})=\delta(X_{\e_5})=\ldots=1 \ \textrm{ and  }\ \delta(X_{\e_2})=\delta(X_{\e_4})=\ldots=2
$$ 
\textit{or} we must have 
$$
\delta(X_{\e_1})=\delta(X_{\e_3})=\delta(X_{\e_5})=\ldots=2 \ \textrm{ and } \ \delta(X_{\e_2})=\delta(X_{\e_4})=\ldots=1.
$$
Therefore, we filter the sequence $(\e_k)_{k\in\N}$ 
accordingly as follows: 
let be 
$$
\tau := \begin{cases}
1, & \textrm{if } X_{\mathbf{e}_1}\in V_1^\times,\\
2, &\textrm{if } X_{\mathbf{e}_2}\in V_1^\times,
\end{cases}
$$
and  set for $k\in\N_0$
\begin{equation}\label{equ:def-Tk}
\TT_k:=\e_{2k+\tau}.
\end{equation}
A crucial observation will be the following proposition:

\begin{Prop}\label{prop:T1-T0-exp-mom}
$\TT_1-\TT_0$ has exponential moments.
\end{Prop}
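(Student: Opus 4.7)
The plan is to show $\E[z^{\TT_1-\TT_0}]<\infty$ for some $z>1$. Assume $\tau=1$; the case $\tau=2$ is symmetric. By the elementary inequality $z^{A+B}\le z^{2A}+z^{2B}$ (valid for $z>0$ and $A,B\ge 0$) applied to $\TT_1-\TT_0=(\e_2-\e_1)+(\e_3-\e_2)$, it suffices to show that each $\e_{k+1}-\e_k$ has exponential moments, for $k\in\{1,2\}$.

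I would decompose $\e_{k+1}-\e_k$ into a sequence of ``tries'' at settling one letter deeper. By the strong Markov property at $\e_k$ combined with Lemma \ref{lem:cone-probabilities}, conditional on $X_{\e_k}=x$ with $\delta(x)=i$, let $\sigma_1<\sigma_2<\dots$ be the successive visits of the walk to words of length $\Vert x\Vert+1$ lying in $C(x)$; at each $\sigma_j$ the walk is at some $X_{\sigma_j}=xy^{(j)}$ with $y^{(j)}\in V_{3-i}^\times$, and write $B_j$ for the event that $X_n\in C(xy^{(j)})$ for all $n\ge\sigma_j$. Then $\e_{k+1}-\e_k=\sigma_J$, where $J:=\min\{j\ge 1:B_j\text{ occurs}\}$. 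The strong Markov property together with the recursive cone structure yields $\PP[B_j\mid\mathcal F_{\sigma_j}]=1-\xi_{3-i}$ (a value not depending on $X_{\sigma_j}$ itself), so writing $\Delta_j:=\sigma_{j+1}-\sigma_j$ and $h_{3-i}(z):=\E[z^{\Delta_j}\mathbf{1}_{\sigma_{j+1}<\infty}\mid B_j^c,\mathcal F_{\sigma_j}]$ (again depending only on $3-i$ by the cone shift), iteration of the strong Markov property gives
\[
\E_x\bigl[z^{\e_{k+1}-\e_k}\mathbf{1}_{A_x}\bigr]=\frac{(1-\xi_{3-i})\,\E_x[z^{\sigma_1}\mathbf{1}_{\sigma_1<\infty}]}{1-\xi_{3-i}\,h_{3-i}(z)},
\]
where $A_x:=\{X_n\in C(x)\ \forall n\ge 0\}$.

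At $z=1$ we have $h_{3-i}(1)\le 1$, so $\xi_{3-i}\,h_{3-i}(1)\le\xi_{3-i}<1$; hence, provided that both $\E_x[z^{\sigma_1}\mathbf{1}_{\sigma_1<\infty}]$ and $h_{3-i}(z)$ are analytic in a neighborhood of $z=1$, the whole right-hand side is finite for some $z>1$ by continuity. These two quantities are first-passage/excursion generating functions for the random walk, and by Lemma \ref{lem:cone-probabilities} combined with the identities \eqref{eq:G-L}--\eqref{eq:G-xi} they can be expressed as rational/composite expressions in $\xi_1(z)$, $\xi_2(z)$, and factor-level functions of the form $L_j(\cdot,\cdot\,|\,\cdot)$, each of which has radius of convergence strictly greater than $1$ by the basic assumption $\varrho<1$ together with \cite[Proposition 9.18]{woess}. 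Thus $\E_x[z^{\e_{k+1}-\e_k}\mathbf{1}_{A_x}]<\infty$ for some $z>1$, and dividing by $\PP_x(A_x)=1-\xi_i>0$ yields exponential moments for $\e_{k+1}-\e_k$. The main obstacle is the bookkeeping required to identify $\E_x[z^{\sigma_1}\mathbf{1}_{\sigma_1<\infty}]$ and $h_{3-i}(z)$ in explicit closed form: after a failed try the walk may exit $C(xy^{(j)})$ either by popping $y^{(j)}$ (returning to $x$) or by jumping directly to some $xy'$ at the same word length, and these two exit modes must be consolidated into a single analytic generating function at $z=1$ via systematic use of the cone shift and the first-passage decompositions \eqref{eq:G-L}--\eqref{eq:G-xi}.
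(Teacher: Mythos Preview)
Your approach differs from the paper's and, as written, has a real gap. The claim that $h_{3-i}(z)=\E[z^{\Delta_j}\mid B_j^c,\mathcal F_{\sigma_j}]$ depends only on $3-i$ is false in the inhomogeneous setting the paper treats. The time to \emph{exit} $C(xy^{(j)})$ from $xy^{(j)}$ is indeed independent of $y^{(j)}$ by the cone shift, but the exit \emph{point} is governed by $p_{3-i}(y^{(j)},\cdot)$, which varies with $y^{(j)}$: whether the walk lands back at $x$ (and must then climb again to depth $\Vert x\Vert+1$) or directly at some $xy'$ depends on $p_{3-i}(y^{(j)},o_{3-i})$. Hence the law of $\Delta_j$ depends on $y^{(j)}$, the ``tries'' are not i.i.d., and your geometric-series identity does not follow. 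One could repair this by tracking the induced Markov chain on $V_{3-i}^\times$ and using a matrix-valued generating function (or a uniform bound when the factors are finite), but that is substantially more work than you indicate. A smaller issue: $\e_k$ is not a stopping time, so ``strong Markov at $\e_k$'' is not available; what is actually needed is the path decomposition $\{\e_k=m,X_{\e_k}=x\}=\{X_{m-1}\notin C(x),X_m=x\}\cap\{X_n\in C(x)\ \forall n\ge m\}$ followed by the ordinary Markov property at the deterministic time $m$.

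For comparison, the paper avoids all of this. It shows directly, via Lemma~\ref{lem:cone-probabilities} and a decomposition over $(\TT_0,X_{\TT_0},\TT_1,X_{\TT_1})$, that $\E[z^{\TT_1-\TT_0}]=\mathcal F(z)$ for a single explicit generating function, and then bounds $\mathcal F(z)\le\sum_{x_1\in V_1^\times,\,y_1\in V_2^\times}G(o,y_1x_1\mid z)$, which is finite for some $z>1$ by \eqref{eq:G-L}--\eqref{eq:G-xi}, \eqref{lem:4-1equ-1}--\eqref{lem:4-1equ-2}, and the assumption $\varrho<1$. No splitting into $\e$-increments or geometric ``tries'' is needed.
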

\begin{proof}
First, we note that Lemma \ref{lem:cone-probabilities} gives for $z\in\mathbb{C}$ and every $x\in V$ with $\delta(x)=1$:
$$
\sum_{\substack{n\in\N,\\ x_1\in V_1^\times,\\ y_1\in V_2^\times}} \P_x\left[\begin{array}{c} X_{n-1}\notin C(xy_1x_1),\\ X_n=xy_1x_1, \\ \forall m<n: X_m\in C(x)\end{array}\right]\cdot z^n 
= \sum_{\substack{n\in\N,\\ x_1\in V_1^\times,\\ y_1\in V_2^\times}} \P\left[\begin{array}{c} X_{n-1}\notin C(y_1x_1),\\ X_n=y_1x_1, \\ \forall m<n: X_m\notin V_1^\times\end{array}\right]\cdot z^n. 
$$
Denote by
$$
\mathcal{W}:=V_1^\times \cup \bigl\{u_2 u_1\mid  u_1\in V_1^\times, u_2\in V_2^\times\bigr\}
$$
the support of $X_{\TT_0}$. Then we obtain for $z>0$ by decomposing according to the values of $\TT_0,\TT_1$ and $X_{\TT_0},X_{\TT_1}$:
\begin{eqnarray*}
&&\mathbb{E}\bigl[z^{\TT_1-\TT_0}\bigr] \\[1ex] 
&=&  \sum_{\substack{n\in\N,\\ x\in \mathcal{W},\\ x_1\in V_1^\times, y_1\in V_2^\times}} \P\Bigl[ X_{\TT_0}=x, X_{\TT_1}=xy_1x_1, \TT_1-\TT_0=n\Bigr]\cdot z^n\\
&=& \sum_{\substack{l\in\N, \\ x\in \mathcal{W}}}  \P\left[\begin{array}{c} X_{l-1}\notin C(x), \\ X_l=x\end{array}\right] \cdot \sum_{\substack{n\in\N,\\ x_1\in V_1^\times,\\ y_1\in V_2^\times}} \P_x\left[\begin{array}{c} X_{n-1}\notin C(xy_1x_1),\\ X_n=xy_1x_1, \\ \forall m<n: X_m\in C(x)\end{array}\right]\cdot z^n \\
&&\quad \cdot \P_{xy_1x_1}\Bigl[\forall j\geq 1: X_j\in C(xy_1x_1)\Bigr]\\[1ex]
&\stackrel{\textrm{Lemma \ref{lem:cone-probabilities}}}{=}&  \sum_{\substack{l\in\N, \\ x\in \mathcal{W}}}  \P\left[\begin{array}{c} X_{l-1}\notin C(x), \\ X_l=x\end{array}\right] \cdot \underbrace{\sum_{\substack{n\in\N,\\ x_1\in V_1^\times,\\ y_1\in V_2^\times}} \P\left[\begin{array}{c} X_{n-1}\notin C(y_1x_1),\\ X_n=y_1x_1, \\ \forall m<n: X_m\notin V_1^\times\end{array}\right]\cdot z^n}_{=:\mathcal{F}(z)} \cdot (1-\xi_1)\\
&=& \underbrace{\sum_{\substack{l\in\N, \\ x\in \mathcal{W}}}  \P\left[\begin{array}{c} X_{l-1}\notin C(x), \\ X_l=x\end{array}\right]\cdot (1-\xi_1)}_{=\P[\TT_0<\infty]=1} \cdot \mathcal{F}(z) =\mathcal{F}(z).
%
%
\end{eqnarray*}
For $z>0$,  we can bound $\mathcal{F}(z)$ from above as follows:
\begin{eqnarray*}
\mathcal{F}(z)& \leq& \sum_{\substack{x_1\in V_1^\times,\\ y_1\in V_2^\times}} G(o,y_1x_1|z) \stackrel{(\ref{eq:G-L})}{=} \sum_{\substack{x_1\in V_1^\times,\\ y_1\in V_2^\times}}G(o,o|z) \cdot L(o,y_1x_1|z) \\
&\stackrel{(\ref{eq:L-L}),(\ref{eq:G-xi})}{=}& G(o,o|z) \cdot\sum_{\substack{x_1\in V_1^\times,\\ y_1\in V_2^\times}} L_2\bigl(o_2,y_1|\xi_2(z)\bigr) \cdot L_1\bigl(o_1,x_1|\xi_1(z)\bigr) \\
&\stackrel{(\ref{lem:4-1equ-1}),(\ref{lem:4-1equ-2})}{\leq} & \frac{G(o,o|z)}{\bigl(1-\xi_1(z)\bigr)\cdot G_1\bigl(o_1,o_1|\xi_1(z)\bigr)\cdot \bigl(1-\xi_2(z)\bigr)\cdot G_2\bigl(o_2,o_2|\xi_2(z)\bigr)}.
\end{eqnarray*}
Recall that $G(o,o|z)$,  $G_i\bigl(o_i,o_i|\xi_i(z)\bigr)$ and $\xi_i(z)$ have radii of convergence strictly bigger than $1$ and that $\xi_i(1)=\xi_i<1$ for $i\in\mathcal{I}$. Therefore, continuity of the involved functions together with Pringsheim's Theorem yield that $\mathcal{F}(z)$ has radius of convergence strictly bigger than $1$, which in turn implies that $\mathbb{E}\bigl[z^{\TT_1-\TT_0}\bigr]$ has radius of convergence strictly bigger than $1$. This proves existence of exponential moments of $\TT_1-\TT_0$.
\end{proof}

We also have:

\begin{Prop}\label{lem:T0-exp-mom}
$\TT_0$ has exponential moments.
\end{Prop}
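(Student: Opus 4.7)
The plan is to repeat the computation of Proposition \ref{prop:T1-T0-exp-mom} almost verbatim, but for the \emph{initial} cone entry $\TT_0$ rather than an increment between consecutive such entries. Decomposing according to the joint value $(l, x) = (\TT_0, X_{\TT_0})$ and invoking the strong Markov property, one writes, for $z>0$,
$$
\mathbb{E}\bigl[z^{\TT_0}\bigr] = \sum_{\substack{l \in \N,\\ x \in \mathcal{W}}} \P\bigl[X_{l-1} \notin C(x),\, X_l = x\bigr] \cdot \P_x\bigl[\forall n \geq 1: X_n \in C(x)\bigr] \cdot z^l.
$$
The key observation---analogous to the appearance of the factor $(1-\xi_1)$ in the previous proof---is that every $x \in \mathcal{W}$ ends in a letter of $V_1^\times$, so by the recursive structure of free products the cone-survival probability $\P_x[\forall n \geq 1: X_n \in C(x)]$ equals $1-\xi_1$ independently of $x$.

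Using the crude bound $\P[X_{l-1} \notin C(x),\, X_l = x] \leq \P[X_l = x]$ and summing over $l$ then gives
$$
\mathbb{E}\bigl[z^{\TT_0}\bigr] \leq (1-\xi_1) \sum_{x \in \mathcal{W}} G(o, x | z).
$$
Splitting $\mathcal{W}$ into its two parts $V_1^\times$ and $\{y_1 x_1 \mid y_1 \in V_2^\times,\, x_1 \in V_1^\times\}$ and factoring each Green function via (\ref{eq:G-L}), (\ref{eq:L-L}) and (\ref{eq:G-xi}) (together with Lemma \ref{lem:cone-probabilities} in the two-letter case) produces exactly the sums $\sum_{x_i \in V_i^\times} L_i\bigl(o_i, x_i \bigl|\xi_i(z)\bigr)$ already treated in the previous proposition. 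By (\ref{lem:4-1equ-1}) and (\ref{lem:4-1equ-2}) the whole expression is majorised by a product of $G(o, o | z)$, $G_i\bigl(o_i, o_i | \xi_i(z)\bigr)$ and $(1 - \xi_i(z))^{-1}$. By the basic assumption and $\xi_i(1)<1$, all of these have radii of convergence strictly bigger than $1$, so continuity together with Pringsheim's theorem yields that the upper bound is finite for some $z > 1$, establishing exponential moments of $\TT_0$.

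There is no genuinely new obstacle compared with Proposition \ref{prop:T1-T0-exp-mom}; the only point requiring a moment of care is the identification of the cone-survival factor, which here is uniformly $1 - \xi_1$ because every word in $\mathcal{W}$ ends in $V_1^\times$. Once this is in place, the remaining estimate and the analyticity arguments are identical to those of the preceding proof.
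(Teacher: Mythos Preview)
Your proposal is correct and follows essentially the same route as the paper's own proof: decompose according to $(\TT_0, X_{\TT_0})$, extract the uniform cone-survival factor $1-\xi_1$, bound by Green functions, and factor via (\ref{eq:G-L}), (\ref{eq:L-L}), (\ref{eq:G-xi}), (\ref{lem:4-1equ-1}), (\ref{lem:4-1equ-2}) before applying Pringsheim. One terminological quibble: the factorisation you write uses the ordinary Markov property at the deterministic time $l$, not the strong Markov property (indeed $\TT_0$ is not a stopping time), but the identity itself is exactly what the paper uses.
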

\begin{proof}
For $z>0$, we obtain by decomposing according to the values of $\TT_0$ and $X_{\TT_0}$:
\begin{eqnarray*}
&&\E\bigl[z^{\TT_0}\bigr]=\sum_{n\geq 1} \P[\TT_0=n]\cdot z^n= \sum_{n\geq 1} \P[\mathbf{e}_{\tau}=n]\cdot z^n\\
&=& \underbrace{\sum_{\substack{m\in\N,\\ x\in V_1^\times}}\P\left[\begin{array}{c} X_{m-1}\notin C(x),\\ X_m=x\end{array}\right]\cdot z^m \cdot \P_x\bigl[\forall j\geq 1: X_j\in C(x)\bigr]}_{\textrm{case $\tau=1$}}\\
&&\quad +  \underbrace{\sum_{\substack{\substack{n\in\N,\\ x_1\in V_1^\times,\\ y_1\in V_2^\times}}}\P\left[\begin{array}{c}X_{n-1}\notin C(y_1x_1),\\ X_n=y_1x_1\end{array}\right] \cdot z^n \cdot \P_{y_1x_1}\bigl[\forall j\geq 1: X_j\in C(y_1x_1)\bigr]}_{\textrm{case $\tau=2$}}\\
&\leq &  \sum_{x\in V_1^\times}G(o,x|z) \cdot (1-\xi_1) + \sum_{\substack{x_1\in V_1^\times,\\ y_1\in V_2^\times}}G(o,y_1x_1|z) \cdot (1-\xi_1)\\
&\stackrel{(\ref{eq:G-L})}{=}&  \sum_{x\in V_1^\times}G(o,o|z)\cdot L(o,x|z) \cdot (1-\xi_1) \\
&&\quad + \sum_{x_1\in V_1^\times, y_1\in V_2^\times}G(o,o|z)\cdot L(o,y_1x_1|z) \cdot (1-\xi_1)\\
&\stackrel{(\ref{eq:L-L}),(\ref{eq:G-xi})}{=}&  G(o,o|z)\cdot (1-\xi_1) \\
&&\quad \cdot \Biggl( \sum_{x\in V_1^\times} L_1\bigl(o_1,x\,\bigl|\, \xi_1(z)\bigr) + \sum_{\substack{x_1\in V_1^\times,\\ y_1\in V_2^\times}}L_2\bigl(o_2,y_1\,\bigl|\, \xi_2(z)\bigr)\cdot L_1\bigl(o_1,x_1\,\bigl|\, \xi_1(z)\bigr) \Biggr) \\
&\leq & \frac{G(o,o|z)\cdot (1-\xi_1)}{\bigl(1-\xi_1(z)\bigr)\cdot G_1\bigl(o_1,o_1\,\bigl|\, \xi_1(z)\bigr)} \cdot \biggl(1+\frac1{\bigl(1-\xi_2(z)\bigr)\cdot G_2\bigl(o_2,o_2\,\bigl|\, \xi_2(z)\bigr)}\biggr).
\end{eqnarray*}
Since $G(o,o|z)$, $G_i\bigl(o_i,o_i\,\bigl|\, \xi_i(z)\bigr)$ and $\xi_i(z)$, $i\in\mathcal{I}$, have radii of convergence strictly bigger than $1$ and $\xi_i(1)<1$, Pringsheim's Theorem yields that the power series $\sum_{n\geq 1} \P[\TT_0=n]\cdot z^n$ has also radius of convergence strictly bigger than $1$. Hence, $\TT_0$ has exponential moments.
\end{proof}
Another essential property is given by the next proposition:
\begin{Prop}\label{prop:Tk-Tk-1-iid}
$(\TT_k-\TT_{k-1})_{k\in\N}$ is an i.i.d. sequence.
\end{Prop}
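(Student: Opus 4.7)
The plan is to strengthen the statement to the i.i.d. property of the enriched increments $\bigl(D_k,\mathbf{W}_k\bigr)_{k\in\N}$, where $D_k:=\TT_k-\TT_{k-1}$ and $\mathbf{W}_k:=y_kx_k$ with $X_{\TT_k}=X_{\TT_{k-1}}y_kx_k$, $y_k\in V_2^\times$, $x_k\in V_1^\times$. The proposition then follows by marginalization over $\mathbf{W}_k$.

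First I would fix $K\in\N$, numbers $n_1,\ldots,n_K\in\N$ and admissible blocks $w_1,\ldots,w_K$, and decompose
$$
\P\bigl[D_k=n_k,\ \mathbf{W}_k=w_k \text{ for all } k\leq K\bigr]
$$
by summing over the values of $\TT_0$, $X_{\TT_0}\in\mathcal{W}$ and the intermediate positions $X_{\TT_k}=X_{\TT_{k-1}}w_k$. Each chunk between $\TT_{k-1}$ and $\TT_k$ is a \emph{last cone entry event}: the walk starts at $X_{\TT_{k-1}}$, remains in $C(X_{\TT_{k-1}})$ during its first $n_k-1$ steps and at step $n_k$ enters $C(X_{\TT_{k-1}}w_k)$ from outside and never leaves it again. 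This is exactly the combinatorial pattern already analyzed in the proof of Proposition \ref{prop:T1-T0-exp-mom}.

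Next I would peel off the chunks one by one, from the outside in. For each $k$, the probability that the walk starting from $X_{\TT_{k-1}}$ realizes the $k$-th chunk and then stays in $C(X_{\TT_{k-1}}w_k)$ forever splits, by conditioning on the hitting event at time $n_k$, into a transient factor times $1-\xi_1$. Lemma \ref{lem:cone-probabilities} then shifts the transient factor to a probability under $\P_o$, eliminating the dependence on the prefix $X_{\TT_{k-1}}$. Telescoping the resulting expression yields
$$
\P\bigl[D_k=n_k,\ \mathbf{W}_k=w_k\ \forall k\leq K\bigr]=\underbrace{\P[\TT_0<\infty]}_{=1}\cdot\prod_{k=1}^K\mathcal{G}(n_k,w_k),
$$
with $\mathcal{G}(n,w):=\P[D_1=n,\mathbf{W}_1=w]$ obtained as the $K=1$ instance; this is the desired i.i.d. property.

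The main obstacle is that the $\TT_k$ are \emph{not} stopping times for the natural filtration — their very definition involves the entire future trajectory — so the strong Markov property is unavailable. The remedy is the device already used in Proposition \ref{prop:T1-T0-exp-mom}: isolate the ``stays forever in $C(X_{\TT_{k-1}})$'' condition as a prefix-independent multiplicative factor $1-\xi_1$, and only then invoke Lemma \ref{lem:cone-probabilities} to pull the transient path probabilities back to $\P_o$. Promoting this single-renewal computation to $K$ nested renewals is conceptually clean but notationally heavy, which is presumably why the full proof is deferred to Appendix \ref{app:proof-Dk-iid}.
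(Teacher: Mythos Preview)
Your proposal is correct and follows essentially the same route as the paper: decompose the joint law of the increments by summing over the intermediate positions $X_{\TT_{k-1}}$ and times, extract the prefix-independent factor $1-\xi_1$ from the ``stay forever in the cone'' event, and use Lemma~\ref{lem:cone-probabilities} to shift each transient chunk back to $\P_o$, yielding the product form. The only cosmetic difference is that you first establish the i.i.d.\ property for the enriched increments $(D_k,\mathbf{W}_k)$ and then marginalize, whereas the paper works directly with $\TT_k-\TT_{k-1}$ (summing over the $y_kx_k$ inside the computation); note also that the paper gives this proof in full in Section~\ref{sec:last-cone-entry-process}, not in the appendix.
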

\begin{proof}
Let be $k\in\N_0$, and denote the support of $X_{\TT_k}$ by 
$$
\mathcal{W}_k:=\Bigl\{x \in V \,\Bigl|\, \Vert x\Vert \in \{2k+1,2k+2\}, \delta(x)=1\Bigr\}.
$$
First, we prove that $\TT_{k}-\TT_{k-1}$, $k\in\N$, has the same distribution as $\TT_1-\TT_0$. To this end, we make a case distinction according to the values of $\TT_{k-1},\TT_{k}$ and $X_{\TT_{k-1}}$, $X_{\TT_{k}}$. For each $m\in\N$, we get
\begin{eqnarray*}
&& \P\bigl[\TT_{k}-\TT_{k-1}=m\bigr] \\[1ex] 
&=& \sum_{\substack{x\in\mathcal{W}_{k-1},\\ x_1\in V_1^\times,\\ y_1\in V_2^\times}}\P\Bigl[\TT_{k}-\TT_{k-1}=m,X_{\TT_{k-1}}=x,X_{\TT_{k}}=xy_1x_1\Bigr] \\
&=& \sum_{\substack{l\in\N,\\ x\in \mathcal{W}_{k-1}}} \P\left[\begin{array}{c} X_{l-1}\notin C(x),\\ X_l=x\end{array}\right] 
\cdot \sum_{\substack{x_1\in V_1^\times,\\ y_1\in V_2^\times}} \P_x\left[\begin{array}{c} X_{m-1}\notin C(xy_1x_1), \\ X_m=xy_1x_1,\\ \forall m'<m: X_{m'}\in C(x) \end{array}\right]\\
&&\quad \cdot \underbrace{\P_{xy_1x_1}\bigl[\forall j\geq 1: X_j\in C(xy_1x_1)\bigr]}_{=1-\xi_1}\\
&\stackrel{\textrm{Lemma \ref{lem:cone-probabilities}}}{=}& \underbrace{\sum_{\substack{l\in\N,\\ x\in \mathcal{W}_{k-1}}} \P\left[\begin{array}{c} X_{l-1}\notin C(x),\\ X_l=x\end{array}\right] \cdot (1-\xi_1)}_{=\P[\TT_{k-1}<\infty]=1}
\cdot \sum_{\substack{x_1\in V_1^\times,\\ y_1\in V_2^\times}} \P\left[\begin{array}{c} X_{m-1}\notin C(y_1x_1),\\ X_m=y_1x_1,\\ \forall m'<m: X_{m'}\notin V_1^\times\end{array}\right]\\
&=&\underbrace{\sum_{\substack{l\in\N,\\ x\in \mathcal{W}_0}} \P\left[\begin{array}{c} X_{l-1}\notin C(x),\\ X_l=x\end{array}\right] \cdot (1-\xi_1)}_{=\P[\TT_0<\infty]=1}
\cdot \sum_{\substack{x_1\in V_1^\times,\\ y_1\in V_2^\times}} \P\left[\begin{array}{c} X_{m-1}\notin C(y_1x_1),\\ X_m=y_1x_1,\\ \forall m'<m: X_{m'}\notin V_1^\times\end{array}\right]\\
&\stackrel{\textrm{Lemma \ref{lem:cone-probabilities}}}{=}& \sum_{\substack{l\in\N,\\ x\in \mathcal{W}_0}} \P\left[\begin{array}{c} X_{l-1}\notin C(x),\\ X_l=x\end{array}\right] 
\cdot \sum_{\substack{x_1\in V_1^\times,\\ y_1\in V_2^\times}} \P_x\left[\begin{array}{c} X_{m-1}\notin C(xy_1x_1),\\ X_m=xy_1x_1,\\ \forall m'<m: X_{m'}\in C(x) \end{array}\right]\cdot (1-\xi_1)\\
&=& 
\P[\TT_1-\TT_0=m].
\end{eqnarray*}
This shows that the sequence $\bigl(\TT_{k+1}-\TT_k)_{k\in\N}$ is identically distributed.
\par
For the proof of independence, 
let be $k\in\N$ and \mbox{$m_1,\ldots,m_k\in\N$.} Then, as we have seen above, we have
\begin{eqnarray*}
&&\P\bigl[\TT_{j}-\TT_{j-1}=m_j\bigr] \\[1ex]
&=&\sum_{\substack{l\in\N,\\ w_{j-1}\in \mathcal{W}_{j-1}}} \sum_{\substack{x_j\in V_1^\times,\\ y_j\in V_2^\times}} \P\left[\begin{array}{c} X_{\TT_{j-1}}=w_{j-1},X_{\TT_{j}}=w_{j-1}y_jx_j,\\\TT_{j-1}=l,\TT_j=l+m_j\end{array}\right]\\
&=&\sum_{\substack{l\in\N,\\ w_{j-1}\in \mathcal{W}_{j-1},\\ x_j\in V_1^\times,\\ y_j\in V_2^\times}} \P\left[\begin{array}{c} X_{l-1}\notin C(w_{j-1}),\\ X_l=w_{j-1}\end{array}\right] \\
&&\quad 
\cdot  \P_{w_{j-1}}\left[\begin{array}{c} X_{m_j-1}\notin C(w_{j-1}y_jx_j), \\ X_{m_j}=w_{j-1}y_jx_j,\\ \forall m'<m: X_{m'}\in C(w_{j-1}) \end{array}\right]\cdot (1-\xi_1).
\end{eqnarray*}
In the following we set $w_0:=o$, and for $x_1,\ldots,x_k\in V_1^\times$, $y_1,\ldots,y_k\in V_2^\times$,  we set $w_j:=y_1x_1\ldots y_jx_j$ for $j\in\{1,\ldots,k\}$. Then we obtain:
\begin{eqnarray*}
&& \P\Bigl[ \TT_1-\TT_0=m_1, \TT_2-\TT_1=m_2,\ldots,\TT_k-\TT_{k-1} = m_k\Bigr] \\
&=& \sum_{\substack{x\in\mathcal{W}_0,\\ x_1,\ldots, x_k\in V_1^\times,\\ y_1,\ldots,y_k\in V_2^\times}}\P\left[[X_{\TT_0}=x]\cap \bigcap_{j=1}^k \left[\begin{array}{c} \TT_j-\TT_{j-1}=m_j, \\  X_{\TT_j}=xy_1x_1\ldots y_jx_j \end{array}\right]\right] \\
&=& \sum_{\substack{l\in\N,\\ x\in \mathcal{W}_0}} \P\left[\begin{array}{c} X_{l-1}\notin C(x),\\ X_l=x\end{array}\right] \\
&&\quad \cdot \sum_{\substack{x_1,\ldots, x_k\in V_1^\times,\\ y_1,\ldots,y_k\in V_2^\times}} \Biggl(\prod_{j=1}^k \P_{xw_{j-1}}\left[\begin{array}{c} X_{m_j-1}\notin C(xw_j), \\ X_{m_j}=xw_j,\\ \forall m'<m_j: X_{m'}\in C(xw_{j-1}) \end{array}\right]\Biggr)\cdot (1-\xi_1)
\end{eqnarray*}
\begin{eqnarray*}
&\stackrel{\textrm{Lemma \ref{lem:cone-probabilities}}}{=}& \sum_{\substack{l\in\N,\\ x\in \mathcal{W}_0}} \P\left[\begin{array}{c} X_{l-1}\notin C(x),\\ X_l=x\end{array}\right] \cdot
\sum_{\substack{x_1\in V_1^\times,\\ y_1\in V_2^\times}} \P_x\left[\begin{array}{c} X_{m_1-1}\notin C(xy_1x_1), \\ X_{m_1}=xy_1x_1,\\ \forall m'<m_1: X_{m'}\in C(x) \end{array}\right]\\
&&\quad \cdot \sum_{\substack{x_2,\ldots, x_k\in V_1^\times,\\ y_2,\ldots,y_k\in V_2^\times}} \Biggl( \prod_{j=2}^k 
\underbrace{\sum_{\substack{t\in\N,\\ w\in \mathcal{W}_{j-1}}} \P\left[\begin{array}{c} X_{t-1}\notin C(w),\\ X_t=w\end{array}\right] \cdot (1-\xi_1)}_{=\P[\TT_{j-1}<\infty]=1}\\
&& \quad \quad \cdot \P\left[\begin{array}{c} X_{m_j-1}\notin C(y_jx_j), \\ X_{m_j}=y_jx_j,\\ \forall m'<m_j: X_{m'}\notin V_1^\times   \end{array}\right]\Biggr) \cdot (1-\xi_1)\\
&\stackrel{\textrm{Lemma \ref{lem:cone-probabilities}}}{=}& \sum_{\substack{l\in\N,\\ x\in \mathcal{W}_0}} \P\left[\begin{array}{c} X_{l-1}\notin C(x),\\ X_l=x\end{array}\right] \cdot
\sum_{\substack{x_1\in V_1^\times,\\ y_1\in V_2^\times}} \P_x\left[\begin{array}{c} X_{m_1-1}\notin C(xy_1x_1), \\ X_{m_1}=xy_1x_1,\\ \forall m'<m_1: X_{m'}\in C(x) \end{array}\right]\cdot (1-\xi_1)\\
&&\quad \cdot \prod_{j=2}^k \sum_{x_j\in V_1^\times, y_j\in V_2^\times}  
\sum_{\substack{t\in\N,\\ w\in \mathcal{W}_{j-1}}} \P\left[\begin{array}{c} X_{t-1}\notin C(w),\\ X_t=w\end{array}\right] \\
&&\quad\quad \cdot \P_w\left[\begin{array}{c} X_{m_j-1}\notin C(wy_jx_j), \\ X_{m_j}=wy_jx_j,\\ \forall m'<m_j: X_{m'}\in C(w) \end{array}\right] \cdot (1-\xi_1)\\
&=&\prod_{j=1}^k \P[\TT_j-\TT_{j-1}=m_j].
\end{eqnarray*}
Hence, we have shown  independence of the sequence $(\TT_k-\TT_{k-1})_{k\in\N}$. This finishes the proof.
\end{proof}

Finally, for random walks on $V$ depending on a finite number $d\in\N$ of parameters $p_1,\ldots,p_d\in(0,1)$, we show that $\E[\TT_1-\TT_0]$ varies real-analytically when seen as a function in $(p_1,\dots,p_d)\in\mathcal{P}_d$.  

\begin{Prop}\label{prop:T1-T0-analytic}
Assume that the random walk on $V$ depends on finitely many parameters $p_1,\ldots,p_d\in(0,1)$, $d\in\N$. Then the mapping
$$
\mathcal{P}_d\ni (p_1,\ldots,p_d)\mapsto \E[\TT_1-\TT_0]
$$
is real-analytic.
\end{Prop}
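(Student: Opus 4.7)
The plan is to identify $\E[\TT_1-\TT_0]$ with $\mathcal{F}'(1)$, where $\mathcal{F}(z):=\E\bigl[z^{\TT_1-\TT_0}\bigr]$ is the probability generating function that was constructed in the proof of Proposition \ref{prop:T1-T0-exp-mom}. Since that proof shows $\mathcal{F}(z)$ has radius of convergence strictly greater than $1$, term-by-term differentiation is valid in a complex neighborhood of $z=1$, and $\mathcal{F}'(1)=\E[\TT_1-\TT_0]<\infty$.

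The first step is to derive a closed-form expression for $\mathcal{F}(z)$ in terms of the building blocks $G(o,o|z)$, $\xi_i(z)$, $G_i\bigl(o_i,o_i|\xi_i(z)\bigr)$ and the first-passage sums $\sum_{x\in V_i^\times}L_i\bigl(o_i,x|\xi_i(z)\bigr)$. The inequality appearing in the proof of Proposition \ref{prop:T1-T0-exp-mom} can be turned into an exact identity by decomposing each path contributing to $\mathcal{F}(z)$ according to where it first enters $V_2^\times$, how long it stays in the corresponding cone, and the final transition to $y_1x_1$; this is achieved by the same applications of (\ref{eq:G-L}), (\ref{eq:L-L}) and (\ref{eq:G-xi}) as in the upper-bound derivation. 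Once such a formula is in hand, $\mathcal{F}'(1)$ follows from the chain rule.

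The conclusion of real-analyticity in $(p_1,\ldots,p_d)\in\mathcal{P}_d$ then reduces to real-analyticity of each building block, which is precisely the content of Section 5 of \cite{gilch:22} and on which the statements of Theorems \ref{th:CLT}--\ref{th:CLT-3} already rely: the Green functions $G$, $G_i$, the first-passage functions $L$, $L_i$, and the first-cone-exit generating function $\xi_i$ all depend real-analytically on $(p_1,\ldots,p_d)$ on a joint complex neighborhood of $z=1$ (respectively $w=\xi_i(1)$). Since composition and differentiation preserve real-analyticity, $\mathcal{F}'(1)$ inherits this property.

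The main technical obstacle is the treatment of the infinite sums $\sum_{x\in V_i^\times}L_i\bigl(o_i,x|\xi_i(z)\bigr)$ in the case $|V_i|=\infty$: even if each term is jointly real-analytic in $z$ and in the parameters, one must verify that the sum inherits joint real-analyticity. This is handled by observing that the bound (\ref{lem:4-1equ-2}) extends to a complex neighborhood of $\xi_i(1)<1$ by continuity of $\xi_i(z)$, giving uniform convergence of the series and of its term-by-term derivatives on compact subsets of a joint complex neighborhood of $(1,p_1,\ldots,p_d)$; joint real-analyticity then follows from a standard Weierstrass-type argument.
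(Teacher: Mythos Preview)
Your proposal hinges on obtaining an exact closed-form expression for $\mathcal{F}(z)=\E\bigl[z^{\TT_1-\TT_0}\bigr]$ in terms of the standard generating functions $G$, $L_i$, $\xi_i$, but this is never derived, and the decomposition you sketch does not match the event defining $\mathcal{F}(z)$. The relevant event carries two simultaneous constraints: the walk must avoid $V_1^\times$ for all $m<n$, and the step at time $n$ must be a \emph{fresh entry} into $C(y_1x_1)$ (which is not the same as a first passage, since the walk may have visited and left $C(y_1x_1)$ earlier while still avoiding $V_1^\times$). The upper bound in Proposition~\ref{prop:T1-T0-exp-mom} was obtained precisely by discarding both constraints, so it cannot be sharpened to an identity merely ``by the same applications of (\ref{eq:G-L}), (\ref{eq:L-L}) and (\ref{eq:G-xi}).'' A genuine closed form would require restricted Green functions for the walk killed on entering $V_1^\times$ together with a last-entry (rather than first-passage) decomposition; this is substantially more than what you outline, and it is not clear that the resulting expression stays within the class of functions whose parameter-analyticity is established in \cite{gilch:22}.

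The paper's argument avoids all of this. It observes that $\P[\TT_1-\TT_0=n]$, once rewritten via Lemma~\ref{lem:cone-probabilities} as a sum over length-$n$ paths from $o$, is a homogeneous polynomial of degree $n$ in $p_1,\ldots,p_d$ with nonnegative integer coefficients $c(n_1,\ldots,n_d)$. The exponential-moment bound from Proposition~\ref{prop:T1-T0-exp-mom} then reads
\[
\sum_{n\ge 1}\P[\TT_1-\TT_0=n]\,(1+\delta)^n
=\sum_{n_1,\ldots,n_d\ge 0}c(n_1,\ldots,n_d)\,\bigl(p_1(1+\delta)\bigr)^{n_1}\cdots\bigl(p_d(1+\delta)\bigr)^{n_d}<\infty,
\]
which is exactly absolute convergence of the multi-variable power series at the inflated point $\bigl(p_1(1+\delta),\ldots,p_d(1+\delta)\bigr)$; real-analyticity of $\E[\TT_1-\TT_0]=\mathcal{F}'(1)$ follows at once. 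No closed form, no building-block analyticity, and no infinite-sum argument are needed.
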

\begin{proof}
For $n\in\N$, we can rewrite:
\begin{eqnarray*}
&&\P[\TT_1-\TT_0=n] \\[1ex]
&=& \sum_{\substack{m\in\N,\\ x\in \mathcal{W}_0,\\ x_1\in V_1^\times, y_1\in V_2^\times}} \P\Bigl[X_{\TT_0}=x,X_{\TT_1}=xy_1x_1,\TT_0=m,\TT_1=m+n\Bigr]
\end{eqnarray*}
\begin{eqnarray*}
&=&  \sum_{\substack{m\in\N,\\ x\in \mathcal{W}_0}} \P\left[ \begin{array}{c} X_{m-1}\notin C(x),\\ X_m=x\end{array}\right]\cdot  \sum_{\substack{x_1\in V_1^\times,\\ y_1\in V_2^\times}} \P_x\left[\begin{array}{c} X_{n-1}\notin C(xy_1x_1),\\ X_n=xy_1x_1,\\ \forall t<n: X_t\in C(x)\end{array}\right] \\
&&\quad \cdot \underbrace{\P_{xy_1x_1}\bigl[\forall j\geq 1: X_j\in C(xy_1x_1)\bigr]}_{=1-\xi_1} \\
&\stackrel{\textrm{Lemma \ref{lem:cone-probabilities}}}{=}& \underbrace{\sum_{\substack{m\in\N,\\ x\in \mathcal{W}_0}} \P\left[ \begin{array}{c} X_{m-1}\notin C(x),\\ X_m=x\end{array}\right]\cdot  (1-\xi_1)}_{=\P[\TT_0<\infty]=1} \cdot \sum_{\substack{x_1\in V_1^\times,\\ y_1\in V_2^\times}} \P\left[\begin{array}{c} X_{n-1}\notin C(y_1x_1),\\ X_n=y_1x_1,\\ \forall t<n: X_t\notin V_1^\times\end{array}\right] \\
&=& \sum_{x_1\in V_1^\times, y_1\in V_2^\times} \P\left[\begin{array}{c} X_{n-1}\notin C(y_1x_1),\\ X_n=y_1x_1,\\ \forall t<n: X_t\notin V_1^\times\end{array}\right].
\end{eqnarray*}
The summands in the last sum describe probabilities which depend on paths of length $n\in\N$ only.
Therefore, the probabilities $\P[\TT_1-\TT_0=n]$ can be written in the form
\begin{equation}\label{equ:n-form}
\sum_{\substack{n_1,\ldots,n_d\geq 0:\\ n_1+\ldots + n_d=n }} c(n_1,\ldots,n_d)\cdot p_1^{n_1}\cdot \ldots \cdot p_d^{n_d},
\end{equation}
where $c(n_1,\ldots,n_d)\in \N_0$. Of course, we are only allowed  to vary the parameters  $p_1,\ldots,p_d>0$ in such a way that these parameter values still allow a well-defined random walk on $V$.
Since $\sum_{n\geq 1} \P[\TT_1-\TT_0=n]\cdot z^n$ has radius of convergence strictly bigger than $1$ due to existence of exponential moments of $\TT_1-\TT_0$, we have for sufficiently small $\delta>0$ that 
\begin{eqnarray}
&& \sum_{n\geq 1} \P[\TT_1-\TT_0=n]\cdot (1+\delta)^n \label{equ:T1-T0-sum-finite}\\
&=& \sum_{n\geq 1} \sum_{\substack{n_1,\ldots,n_d\geq 0:\\ n_1+\ldots + n_d=n }} c(n_1,\ldots,n_d)\cdot \bigl(p_1(1+\delta)\bigr)^{n_1}\cdot \ldots \cdot \bigl(p_d(1+\delta)\bigr)^{n_d}<\infty.\nonumber
\end{eqnarray}
Therefore,
\begin{equation}
\frac{\partial}{\partial z}\biggl[ \sum_{n\geq 1} \P[\TT_1-\TT_0=n]\cdot z^n\biggr]\Biggl|_{z=1+\delta}<\infty, \label{equ:T1-T0-sum-finite-2}
\end{equation}
which implies that $\E[\TT_1-\TT_0]$ varies real-analytically in $(p_1,\ldots,p_d)$. For further detailed explanations on real-analyticity in terms of $p_1,\ldots,p_d$, we refer to the beginning of Section 5 in \cite[p. 299]{gilch:22}.
\end{proof}

\section{Central Limit Theorem for the Drift}
\label{sec:CLT}

In this section we will prove Theorem \ref{th:CLT}. For this purpose, we define for $k\in\N$
$$
\DD_k := d(X_{\TT_{k-1}},X_{\TT_{k}}) 
$$
and set
$$
\widetilde \DD_k:=  \DD_k -(\TT_k-\TT_{k-1})\cdot\lambda = d(X_{\TT_{k-1}},X_{\TT_k}) -(\TT_k-\TT_{k-1}) \cdot \lambda.
$$
Due to the structure of free products, any path from $o$ to $X_{\TT_k}$ has to pass trough $X_{\TT_0}$ ,$X_{\TT_1}, \ldots, X_{\TT_{k-1}}$. Therefore, we have
\begin{equation}\label{equ:d-decomposition}
d(o,X_{\TT_k})=d(o,X_{\TT_0})+\sum_{j=1}^k d(X_{\TT_{j-1}},X_{\TT_j}) = d(o,X_{\TT_0})+\sum_{j=1}^k \DD_j.
\end{equation}

\begin{Prop}\label{prop:Dk-iid}
We have:
\begin{enumerate}[label=(\roman*)]
\item $\bigl(\DD_k\bigr)_{k\in\N}$ is an i.i.d. sequence.
\item $\bigl(\widetilde \DD_k\bigr)_{k\in\N}$ is an i.i.d. sequence.
\item If $P$ depends on finitely many parameters $p_1,\ldots,p_d$, then $\E[\DD_1]$ and $\E\bigl[  d(X_{\TT_{0}},X_{\TT_1}) (\TT_1-\TT_{0})\bigr]$ vary real-analytically in $(p_1,\ldots,p_d)$.
\end{enumerate}
\end{Prop}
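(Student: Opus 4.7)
The plan is to follow the decomposition strategy of Propositions \ref{prop:Tk-Tk-1-iid} and \ref{prop:T1-T0-analytic}, but now tracking the graph distance $\DD_k$ alongside the renewal increment $\TT_k - \TT_{k-1}$. The crucial additional geometric fact I need is a distance-level analog of Lemma \ref{lem:cone-probabilities}: for any $w\in V$ and any $v\in V$ whose first letter (if any) lies in $V_{3-\delta(w)}^\times$, one has $d(w,wv) = d(o,v)$. Indeed, the transition graph $\mathcal{X}$ is built by iteratively gluing copies of $\mathcal{X}_1$ and $\mathcal{X}_2$ at vertices in a tree-like fashion, so any geodesic from $w$ to $wv$ must stay inside the cone $C(w)$ (leaving and re-entering $C(w)$ requires at least two extra edges through the unique parent of $w$). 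The induced subgraph on $C(w)$ is carried by the shift $wu\mapsto u$ isomorphically onto the corresponding subgraph of $\mathcal{X}$ at $o$, which contains the geodesic realising $d(o,v)$.

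For part (i), I would replay the proof of Proposition \ref{prop:Tk-Tk-1-iid} verbatim, replacing every occurrence of $\P[\TT_j - \TT_{j-1} = m_j]$ by the joint probability $\P[\TT_j - \TT_{j-1} = m_j,\, \DD_j = \ell_j]$. Each conditional term of the form $\P_{xw_{j-1}}[X_{m_j - 1}\notin C(xw_j), X_{m_j} = xw_j, \ldots]$ then picks up the extra deterministic constraint $d(xw_{j-1},xw_j) = \ell_j$, which by the geometric identity above is equivalent to $d(o,y_jx_j) = \ell_j$ and depends only on $(y_j,x_j)$, not on the anchor $xw_{j-1}$. Combined with Lemma \ref{lem:cone-probabilities} for the probability shift, the identical algebraic manipulations then yield that the pairs $(\DD_k,\TT_k - \TT_{k-1})_{k\in\N}$ form an i.i.d.\ sequence; (i) follows by projection, and (ii) is immediate since $\widetilde{\DD}_k$ is a deterministic function of $(\DD_k,\TT_k - \TT_{k-1})$.

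For part (iii), the step-count bound $0\leq \DD_1 \leq \TT_1 - \TT_0$ together with Proposition \ref{prop:T1-T0-exp-mom} already ensures integrability of both $\DD_1$ and $\DD_1(\TT_1 - \TT_0)$. Following the template of Proposition \ref{prop:T1-T0-analytic}, the joint probability $\P[\DD_1 = \ell,\TT_1 - \TT_0 = n]$ can, after the cone shift, be written as
\[
\sum_{\substack{x_1 \in V_1^\times,\\ y_1 \in V_2^\times}} \P\left[\begin{array}{c} X_{n-1}\notin C(y_1x_1),\, X_n=y_1x_1,\\ d(o,y_1x_1) = \ell,\\ \forall t<n:\, X_t\notin V_1^\times\end{array}\right],
\]
which, as in (\ref{equ:n-form}), is a non-negative polynomial in $(p_1,\ldots,p_d)$ of degree $n$. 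For sufficiently small $\delta>0$, the exponential moments of $\TT_1-\TT_0$ and the estimate $\ell \leq n$ yield the dominations
\[
\sum_{\ell,n}\ell(1+\delta)^n \P[\DD_1=\ell,\TT_1-\TT_0=n] \leq \sum_n n(1+\delta)^n \P[\TT_1-\TT_0=n] < \infty,
\]
and similarly $\sum_{\ell,n}\ell\, n\,(1+\delta)^n\P[\DD_1=\ell,\TT_1-\TT_0=n]<\infty$. Hence the power-series representations of $\E[\DD_1]$ and $\E[\DD_1(\TT_1-\TT_0)]$ in $(p_1,\ldots,p_d)$ converge absolutely on a polydisk strictly containing $\mathcal{P}_d$, and real-analyticity follows exactly as in Proposition \ref{prop:T1-T0-analytic} and \cite[Section~5]{gilch:22}.

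The main obstacle I anticipate is not the bookkeeping in (i)--(ii) or the dominated-convergence argument in (iii), but rather the clean formulation and use of the distance identity $d(w,wv)=d(o,v)$ in every shifted cone that appears in the decomposition; this geometric input is what makes the cone-shift from Lemma \ref{lem:cone-probabilities} compatible with the graph distance, and once it is in place everything else reduces to a direct imitation of arguments already developed in Section \ref{sec:last-cone-entry-process}.
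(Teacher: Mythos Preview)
Your proposal is correct and follows essentially the same approach as the paper. The only cosmetic difference is that you track the joint pairs $(\DD_k,\TT_k-\TT_{k-1})$ and obtain (i) and (ii) simultaneously by projection, whereas the paper repeats the decomposition separately for $\DD_k$ and for $\widetilde\DD_k$; your explicit mention of the distance identity $d(w,wv)=d(o,v)$ makes precise a step the paper uses tacitly, and your argument for (iii) matches the paper's line for line.
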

\begin{proof}
Since the proofs of (i) and (ii) are completely analogous to the proof of Proposition \ref{prop:Tk-Tk-1-iid}, we outsource both proofs to Appendix \ref{app:proof-Dk-iid}.
\par
For the proof of (iii), we note that, for $n,N\in\N$, we can rewrite analogously to the calculations in the proof of Proposition \ref{prop:T1-T0-analytic}
$$
\P\Bigl[d(X_{\TT_0},X_{\TT_1})=N,\TT_1-\TT_0=n\Bigr] = \sum_{\substack{x_1\in V_1^\times,\\ y_1\in V_2^\times:\\ d(o,y_1x_1)=N}} \P\left[\begin{array}{c} X_{n-1}\notin C(y_1x_1),\\ X_n=y_1x_1,\\ \forall t<n: X_t\notin V_1^\times\end{array}\right].
$$
The summands in the right sum depend only on paths of length $n\in\N$. Therefore, we can rewrite $\P\bigl[d(X_{\TT_0},X_{\TT_1})=N,\TT_1-\TT_0=n\bigr]$ as a sum
\begin{equation}\label{equ:form-n-path}
\sum_{\substack{n_1,\ldots,n_d\geq 0:\\ n_1+\ldots + n_d=n }} c_N(n_1,\ldots,n_d)\cdot p_1^{n_1}\cdot \ldots \cdot p_d^{n_d},
\end{equation}
where $c_N(n_1,\ldots,n_d)\in\N$. For sufficiently small $\delta>0$, we have then
$$
\sum_{n,N\in\N} \P\left[ \begin{array}{c} d(X_{\TT_0},X_{\TT_1})=N,\\ \TT_1-\TT_0=n\end{array}\right]\cdot (1+\delta)^n = \sum_{n\in\N} \P\bigl[\TT_1-\TT_0=n\bigr]\cdot (1+\delta)^n \stackrel{(\ref{equ:T1-T0-sum-finite})}{<}\infty.
$$
We also note that
\begin{equation}\label{equ:zero-prob}
\P\Bigl[d(X_{\TT_0},X_{\TT_1})=N,\TT_1-\TT_0=n\Bigr]=0 \quad \textrm{ if } N>n.
\end{equation}
Since we can rewrite
$$
\E[\DD_1]=\E\bigl[d(X_{\TT_0},X_{\TT_1})\bigr]=\sum_{n,N\in\N} N\cdot \sum_{\substack{n_1,\ldots,n_d\geq 0:\\ n_1+\ldots + n_d=n }} c_N(n_1,\ldots,n_d)\cdot p_1^{n_1}\cdot \ldots \cdot p_d^{n_d}
$$
and
\begin{eqnarray*}
&& \sum_{n,N\in\N} N\cdot \sum_{\substack{n_1,\ldots,n_d\geq 0:\\ n_1+\ldots + n_d=n }} c_N(n_1,\ldots,n_d)\cdot \bigl(p_1(1+\delta)\bigr)^{n_1}\cdot \ldots \cdot \bigl(p_d(1+\delta)\bigr)^{n_d}\\
&=& \sum_{n,N\in\N} N\cdot \P\bigl[d(X_{\TT_0},X_{\TT_1})=N,\TT_1-\TT_0=n\bigr]\cdot (1+\delta)^n \\
&\stackrel{(\ref{equ:zero-prob})}{\leq}& \sum_{n,N\in\N} n\cdot \P\bigl[d(X_{\TT_0},X_{\TT_1})=N,\TT_1-\TT_0=n\bigr]\cdot (1+\delta)^n \\
&\leq & (1+\delta)\cdot \frac{\partial}{\partial z}\biggl[ \sum_{n\in\N} \P\bigl[\TT_1-\TT_0=n\bigr]\cdot z^n\biggr]\Biggl|_{z=1+\delta}\stackrel{(\ref{equ:T1-T0-sum-finite-2})}{<}\infty,
\end{eqnarray*}
we obtain that $\E[\DD_1]=\E\bigl[d(X_{\TT_0},X_{\TT_1})\bigr]$ varies real-analytically in the random walk parameters $(p_1,\ldots,p_d)\in\mathcal{P}_d$.
\par
Real-analyticity of $\E\bigl[  d(X_{\TT_{0}},X_{\TT_1}) (\TT_1-\TT_{0})\bigr]$ follows  analogously from 
\begin{eqnarray*}
&& \sum_{n,N\in\N} N\cdot n\cdot \P\bigl[d(X_{\TT_0},X_{\TT_1})=N,\TT_1-\TT_0=n\bigr]\cdot (1+\delta)^n \\
&\stackrel{(\ref{equ:zero-prob})}{\leq} & \sum_{n\in\N} n^2 \cdot \P\bigl[\TT_1-\TT_0=n\bigr]\cdot (1+\delta)^n <\infty,
\end{eqnarray*}
together with exponential moments of $\TT_1-\TT_0$. 
\end{proof}

For $n\in\N$, set
$$
\mathbf{k}(n):=\sup\bigl \{m\in\N \,\bigl|\, \TT_m\leq n\bigr\}.
$$
Then $\lim_{n\to\infty} \mathbf{k}(n)=\infty$ almost surely since $\TT_k<\infty$ almost surely for each $k\in\N_0$. Furthermore, due to Propositions \ref{prop:T1-T0-exp-mom} and \ref{prop:Tk-Tk-1-iid}, the Strong Law of Large Numbers and almost sure finiteness of $\TT_0$  give
\begin{equation}\label{equ:Tn/n-convergence}
\lim_{n\to\infty} \frac{\TT_{\mathbf{k}(n)}}{\mathbf{k}(n)} = \lim_{n\to\infty} \frac{1}{\mathbf{k}(n)}\sum_{k=1}^{\mathbf{k}(n)} (\TT_k-\TT_{k-1})=\E[\TT_1-\TT_0]\quad \textrm{almost surely.}
\end{equation}

We have the following alternative formula for the drift $\lambda$: 
\begin{Cor}\label{cor:ell-formula}
$$
\lambda= \lim_{n\to\infty} \frac{d(o,X_n)}{n} = \frac{\E[\DD_1]}{\E[\TT_1-\TT_0]} \quad \textrm{almost surely.}
$$
\end{Cor}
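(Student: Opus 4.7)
The plan is to combine the renewal decomposition (\ref{equ:d-decomposition}) with the Strong Law of Large Numbers applied to the two i.i.d. sequences at hand, namely $(\DD_k)_{k\in\N}$ from Proposition \ref{prop:Dk-iid}(i) and $(\TT_k-\TT_{k-1})_{k\in\N}$ from Proposition \ref{prop:Tk-Tk-1-iid}. Since the existence of the almost sure limit $\lambda = \lim_{n\to\infty} d(o,X_n)/n$ is already known from \cite[Corollary 4.2]{gilch}, all that remains is to identify the value of this limit by evaluating it along the subsequence of renewal times $(\TT_m)_{m\in\N}$, which tends to $\infty$ almost surely.

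First, I would note that $\E[\DD_1]<\infty$. This follows from the trivial deterministic bound
\begin{equation*}
\DD_1 = d(X_{\TT_0},X_{\TT_1}) \leq \TT_1-\TT_0,
\end{equation*}
since any two vertices visited by the Markov chain in $m$ steps are at graph distance at most $m$ in the transition graph $\mathcal{X}$. Exponential moments of $\TT_1-\TT_0$ (Proposition \ref{prop:T1-T0-exp-mom}) then imply $\E[\DD_1]<\infty$. By an analogous reasoning, $\DD_1\geq 0$ is automatic.

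Next, using (\ref{equ:d-decomposition}) and the definition of $\TT_m$, I would write for each $m\in\N$
\begin{equation*}
\frac{d(o,X_{\TT_m})}{\TT_m} = \frac{\frac{1}{m}d(o,X_{\TT_0}) + \frac{1}{m}\sum_{j=1}^m \DD_j}{\frac{1}{m}\TT_0 + \frac{1}{m}\sum_{j=1}^m (\TT_j-\TT_{j-1})}.
\end{equation*}
Since $\TT_0<\infty$ and $d(o,X_{\TT_0})<\infty$ almost surely (the former by Proposition \ref{lem:T0-exp-mom}, the latter being bounded by $\TT_0$), and since both $(\DD_k)$ and $(\TT_k-\TT_{k-1})$ are i.i.d. with finite mean, the Strong Law of Large Numbers yields
\begin{equation*}
\lim_{m\to\infty} \frac{d(o,X_{\TT_m})}{\TT_m} = \frac{\E[\DD_1]}{\E[\TT_1-\TT_0]} \quad \textrm{almost surely.}
\end{equation*}

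Finally, because $d(o,X_n)/n$ converges almost surely to $\lambda$ and $\TT_m\to\infty$ almost surely, the subsequence $d(o,X_{\TT_m})/\TT_m$ must converge to the same constant $\lambda$. Equating the two expressions gives the claim. There is no real obstacle here: the careful renewal setup of Section \ref{sec:last-cone-entry-process} reduces the statement to a clean ratio-SLLN, and the only tiny check needed is the first-moment finiteness of $\DD_1$, which is free from the domination $\DD_1\leq \TT_1-\TT_0$.
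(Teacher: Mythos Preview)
Your proof is correct and follows essentially the same route as the paper's: both use the known existence of $\lambda$, pass to the subsequence of renewal times, apply the Strong Law of Large Numbers to the i.i.d.\ sequences $(\DD_k)$ and $(\TT_k-\TT_{k-1})$, and handle $\E[\DD_1]<\infty$ via the bound $\DD_1\le \TT_1-\TT_0$. The only cosmetic difference is that the paper runs along $\TT_{\mathbf{k}(n)}$ and cites the already-established limit (\ref{equ:Tn/n-convergence}), whereas you index directly by $m$ and redo that denominator limit inline.
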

\begin{proof}
Since $\lambda$ exists due to \cite[Corollary 4.2]{gilch} and $\TT_0<\infty$ almost surely, (\ref{equ:d-decomposition}) yields
$$
\lambda= \lim_{n\to\infty} \frac{d(o,X_{\TT_{\mathbf{k}(n)}})}{\TT_{\mathbf{k}(n)}} =  \lim_{k\to\infty} \frac{\mathbf{k}(n)}{\TT_{\mathbf{k}(n)}} \frac{1}{\mathbf{k}(n)} \sum_{i=1}^{\mathbf{k}(n)} \DD_i \quad \textrm{almost surely.}
$$
As $0\leq \DD_1\leq \TT_1-\TT_0$ we have $\E[\DD_1]<\infty$.
The Strong Law of Large Numbers together with Proposition \ref{prop:Dk-iid} and (\ref{equ:Tn/n-convergence}) yield now the proposed formula for the drift $\lambda$.
\end{proof}

Now we can show in the next two lemmas that the random variables $\widetilde\DD_k$ are centralized and have finite, non-trivial variance.

\begin{Lemma}
$\bar\sigma_{\lambda}^2:=\mathrm{Var}\bigl(\widetilde \DD_1\bigr)<\infty$.
\end{Lemma}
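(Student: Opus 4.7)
The plan is to dominate $|\widetilde \DD_1|$ by a constant multiple of $\TT_1-\TT_0$ and then invoke the exponential moments of $\TT_1-\TT_0$ provided by Proposition \ref{prop:T1-T0-exp-mom}.

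The key observation is the trivial bound
$$
0\;\leq\;\DD_1 \;=\; d(X_{\TT_0},X_{\TT_1}) \;\leq\; \TT_1-\TT_0,
$$
which holds because the random walk produces a path from $X_{\TT_0}$ to $X_{\TT_1}$ of length exactly $\TT_1-\TT_0$, and the graph distance is by definition the minimal length among all such paths. Combining this with the definition $\widetilde \DD_1 = \DD_1 - (\TT_1-\TT_0)\cdot\lambda$ and the triangle inequality, I obtain
$$
|\widetilde \DD_1| \;\leq\; \DD_1 + \lambda\cdot(\TT_1-\TT_0) \;\leq\; (1+\lambda)\cdot (\TT_1-\TT_0).
$$

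Squaring yields $\widetilde \DD_1^{\,2} \leq (1+\lambda)^2\cdot(\TT_1-\TT_0)^2$. By Proposition \ref{prop:T1-T0-exp-mom}, $\TT_1-\TT_0$ possesses exponential moments, so in particular $\E[(\TT_1-\TT_0)^2]<\infty$. Taking expectations gives $\E[\widetilde \DD_1^{\,2}]<\infty$, and hence $\bar\sigma_\lambda^2 = \mathrm{Var}(\widetilde \DD_1)\leq \E[\widetilde \DD_1^{\,2}]<\infty$, which is the claim.

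I do not anticipate any real obstacle here: the statement reduces to the elementary bound $\DD_1\leq \TT_1-\TT_0$, which is immediate from the definition of the graph metric, combined with the already-established exponential moments of the renewal increment. The subsequent companion lemma asserting non-triviality of $\bar\sigma_\lambda^2$ will presumably be the more delicate one, requiring the existence assumption from Remarks \ref{rem:case-2-2}.(iii).
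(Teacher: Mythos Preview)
Your proof is correct and essentially identical to the paper's own argument: both bound $|\widetilde\DD_1|$ by a constant multiple of $\TT_1-\TT_0$ via $\DD_1\leq \TT_1-\TT_0$ and then invoke Proposition~\ref{prop:T1-T0-exp-mom}. The only cosmetic difference is that the paper uses the bound $\lambda\in(0,1]$ to obtain the constant $2$ in place of your $(1+\lambda)$.
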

\begin{proof}
This follows from 
$$
0\leq \bigl| \DD_1 -(\TT_1-\TT_0)\cdot \lambda\bigr| \leq d(X_{\TT_0},X_{\TT_1}) +(\TT_1-\TT_0)\cdot \lambda \stackrel{\lambda\in(0,1]}{\leq} 2\cdot (\TT_1-\TT_0)
$$
and  the fact that $\TT_1-\TT_0$ has exponential moments \mbox{(Proposition \ref{prop:T1-T0-exp-mom}).}
\end{proof}

\begin{Lemma}\label{lem:variance-lambda>0}
We have $\E\bigl[\widetilde\DD_1\bigr]=0$ and 
$$
\bar\sigma_{\lambda}^2=\E\Bigl[ \Bigl( d(X_{\TT_{0}},X_{\TT_1}) -(\TT_1-\TT_{0}) \cdot \lambda\Bigr)^2\Bigr] >0.
$$
\end{Lemma}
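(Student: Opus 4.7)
The identity $\E[\widetilde\DD_1]=0$ is an immediate consequence of Corollary \ref{cor:ell-formula}, which gives $\lambda\cdot \E[\TT_1-\TT_0]=\E[\DD_1]$, so that $\E[\widetilde\DD_1]=\E[\DD_1]-\lambda\cdot\E[\TT_1-\TT_0]=0$. Since the variance of $\widetilde\DD_1$ is finite by the preceding lemma, the substantive content is $\bar\sigma_\lambda^2>0$; in view of $\E[\widetilde\DD_1]=0$ it suffices to show that $\widetilde\DD_1$ is not almost surely zero.

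My plan is to exhibit two positive-probability events $A$ and $B$ on which the ratio $\DD_1/(\TT_1-\TT_0)$ takes two distinct deterministic values. Since $\widetilde\DD_1\equiv 0$ a.s.\ would force this ratio to equal $\lambda$ on both events, this yields the desired contradiction. For Event $A$ (shortest path), fix $y_1\in V_2^\times$ and $x_1\in V_1^\times$; the accessibility assumption on $P_1,P_2$ provides a shortest path from $o_2$ to $y_1$ in $\mathcal{X}_2$ and a shortest path from $o_1$ to $x_1$ in $\mathcal{X}_1$, each of positive probability. Their concatenation in $\mathcal{X}$ yields a trajectory from $o$ (in shifted coordinates, cf.\ Lemma \ref{lem:cone-probabilities}) to $y_1 x_1$ of length $\DD_1=d_2(o_2,y_1)+d_1(o_1,x_1)$, followed by commitment to $C(y_1 x_1)$ forever (probability $1-\xi_1>0$). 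On this event $\TT_1-\TT_0=\DD_1$, so the ratio is $1$. For Event $B$ (loop), Remarks \ref{rem:case-2-2}.(iii) furnishes $j\in\mathcal{I}$, $y\in V_j^\times$ and $n_y\geq 2$ with $P_j^{n_y}(y,y)>0$; taking (say) $j=1$, take a shortest path from $o$ to $y_1 y$, insert a loop of length $n_y$ at $y_1 y$ coming from the cycle at $y$ in $\mathcal{X}_1$, and then commit to $C(y_1 y)$ forever. This trajectory has positive probability and realizes $\TT_1-\TT_0=\DD_1+n_y$ with $\DD_1=d_2(o_2,y_1)+d_1(o_1,y)$, so the ratio equals $\DD_1/(\DD_1+n_y)<1$. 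The two ratios disagree, contradicting $\widetilde\DD_1\equiv 0$.

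The main obstacle I foresee is verifying that both constructed trajectories genuinely describe a single $\TT_0\to\TT_1$ increment in the shifted picture used in the proof of Proposition \ref{prop:T1-T0-exp-mom}. Concretely, one must check that the walker is outside the target cone at the penultimate step, enters it at the claimed final step, and stays inside forever. For Event $A$ this is immediate from the definition of a shortest path. For Event $B$ the intermediate vertices of the loop lie in $y_1\cdot V_1^\times \subset C(y_1)\setminus C(y_1 y)$, so the last entry into $C(y_1 y)$ takes place precisely at the end of the loop; moreover these intermediate vertices have word length $2$ and therefore satisfy the condition ``$X_m\notin V_1^\times$'' appearing in the factorisation of Proposition \ref{prop:T1-T0-exp-mom}. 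The case $j=2$ is handled by the symmetric construction, where the loop is inserted on the first letter of the target word instead of the second.
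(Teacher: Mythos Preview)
Your argument is correct and follows essentially the same route as the paper: both proofs deduce $\E[\widetilde\DD_1]=0$ from Corollary~\ref{cor:ell-formula} and then exhibit two explicit positive-probability trajectories (one with and one without the guaranteed loop at $y\in V_j^\times$) on which $\widetilde\DD_1$ takes distinct values. The only cosmetic difference is that the paper keeps the same target vertex $x_0yx_0$ in both events and compares the two values of $\TT_1-\TT_0$ directly, whereas you compare the ratios $\DD_1/(\TT_1-\TT_0)$ at possibly different targets; this is equivalent. One small inaccuracy: the intermediate vertices of your loop lie in $y_1\cdot V_1$, not necessarily in $y_1\cdot V_1^\times$, since the $P_1$-cycle at $y$ may pass through $o_1$ (which becomes $y_1$ in the free product); this does not affect the verification, as $y_1\in V_2^\times$ still satisfies ``$X_m\notin V_1^\times$'' and $y_1\notin C(y_1y)$.
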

\begin{proof}
Corollary \ref{cor:ell-formula} implies
$$
\E\bigl[\widetilde \DD_1\bigr] = \E\Bigl[\DD_1-(\TT_1-\TT_0)\cdot\lambda\Bigr] = \E\bigl[\DD_1\bigr] - \lambda \cdot \underbrace{\E[\TT_1-\TT_0]}_{=\E[\DD_1]/\lambda}=0.
$$
The variance formula for $\bar\sigma_\lambda^2$ follows now directly from $\E\bigl[\widetilde \DD_1\bigr]=0$. Moreover, $d(X_{\TT_{0}},X_{\TT_1}) -(\TT_1-\TT_{0}) \cdot \lambda$ is not constant, which we show by constructing two different paths yielding two different values for that difference with positive probability. Take any $x_0\in V_1^\times$ with $p_1^{(m_0)}(o_1,x_0)>0$ and $p_1^{(n_0)}(x_0,x_0)>0$ for some $m_0,n_0\in\N$ (recall the assumption made at the beginning of Subsection \ref{subsec:random-walks}), and choose any $y\in V_2^\times$ with $p_2(o_2,y)>0$. Then:
\begin{eqnarray*}
&&\P\Bigl[X_{\TT_0} =x_0,X_{\TT_1} =x_0yx_0, \TT_1-\TT_0=m_0+1\Bigr]\\
&\geq & \alpha_1^{m_0}p_1^{(m_0)}(o_1,x_0)\cdot \alpha_2\cdot p_2(o_2,y) \cdot  \alpha_1^{m_0}p_1^{(m_0)}(o_1,x_0)\cdot (1-\xi_1)>0.
\end{eqnarray*}
But we also have
\begin{eqnarray*}
&&\P\Bigl[X_{\TT_0} =x_0,X_{\TT_1-n_0} =X_{\TT_1}=x_0yx_0, \TT_1-\TT_0=m_0+1+n_0\Bigr]\\
&\geq & \alpha_1^{2m_0+n_0}p_1^{(m_0)}(o_1,x_0)^2 \cdot \alpha_2\cdot p_2(o_2,y) \cdot p_1^{(n_0)}(x_0,x_0)\cdot (1-\xi_1)>0,
\end{eqnarray*}
since we can add a loop of length $n_0$ at $x_0yx_0$ within $C(x_0y)$. As $\lambda >0$ and we have in both cases $d(X_{\TT_0},X_{\TT_1})=d(x_0,x_0yx_0)$ but different values of $\TT_1-\TT_0$, we have shown that the difference $d(X_{\TT_0},X_{\TT_1})-(\TT_1-\TT_0)\lambda$ is not constant, providing a strictly positive variance $\bar\sigma_\lambda^2>0$.
\end{proof}

For $k\in\N$, define
$$
\SS_k := \sum_{j=1}^k \DD_j = \sum_{j=1}^k d(X_{\TT_{j-1}},X_{\TT_j}) = d(X_{\TT_0},X_{\TT_k})= d(o,X_{\TT_k})-d(o,X_{\TT_0}),
$$
and set
$$
\widetilde \SS_k :=\sum_{j=1}^k \widetilde \DD_j =\SS_k -(\TT_k-\TT_0)\cdot \lambda.
$$
After all this cumbersome preliminary work we can now follow the approach in \mbox{\cite[Section 4]{gilch:22}} in order to derive the proposed central limit theorem.

\begin{Lemma}\label{lem:overhang}
$$
\frac{d(o,X_n)-\SS_{\mathbf{k}(n)}}{\sqrt{n}} \xrightarrow{\P} 0.
$$
\end{Lemma}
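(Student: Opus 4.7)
The plan is to control the ``overhang'' $n-\TT_{\mathbf{k}(n)}$ and relate both quantities on the numerator to already-proven estimates on the renewal increments $\TT_k-\TT_{k-1}$. First I would decompose
\[
d(o,X_n)-\SS_{\mathbf{k}(n)}=\Bigl(d(o,X_n)-d(o,X_{\TT_{\mathbf{k}(n)}})\Bigr)+d(o,X_{\TT_0}),
\]
using the definition $\SS_k=d(o,X_{\TT_k})-d(o,X_{\TT_0})$. The triangle inequality gives
\[
\bigl|d(o,X_n)-d(o,X_{\TT_{\mathbf{k}(n)}})\bigr|\leq d\bigl(X_{\TT_{\mathbf{k}(n)}},X_n\bigr)\leq n-\TT_{\mathbf{k}(n)}\leq \TT_{\mathbf{k}(n)+1}-\TT_{\mathbf{k}(n)},
\]
by definition of $\mathbf{k}(n)$ and because one random walk step increases $d(o,\cdot)$ by at most one.

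Next, the term $d(o,X_{\TT_0})/\sqrt{n}$ tends to $0$ almost surely, since $d(o,X_{\TT_0})\leq \TT_0<\infty$ almost surely by Proposition \ref{lem:T0-exp-mom}. So everything reduces to showing that
\[
\frac{\TT_{\mathbf{k}(n)+1}-\TT_{\mathbf{k}(n)}}{\sqrt{n}}\xrightarrow{\P} 0.
\]
Using the crude bound $\TT_k\geq 2k$ (coming from $\TT_k=\e_{2k+\tau}$ and $\Vert X_{\e_j}\Vert=j$), we get $\mathbf{k}(n)\leq n$, hence for any $\varepsilon>0$,
\[
\P\Bigl[\TT_{\mathbf{k}(n)+1}-\TT_{\mathbf{k}(n)}>\varepsilon\sqrt{n}\Bigr]\leq \P\Bigl[\max_{1\leq k\leq n+1}(\TT_k-\TT_{k-1})>\varepsilon\sqrt{n}\Bigr].
\]

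Here the main step is an exponential-moment tail bound. By Propositions \ref{prop:T1-T0-exp-mom} and \ref{prop:Tk-Tk-1-iid}, the increments $\TT_k-\TT_{k-1}$ are i.i.d.\ with a common moment generating function finite on some neighbourhood of the origin; in particular, there exist constants $C,c>0$ with $\P[\TT_1-\TT_0>t]\leq C e^{-ct}$ for all $t\geq 0$. A union bound then gives
\[
\P\Bigl[\max_{1\leq k\leq n+1}(\TT_k-\TT_{k-1})>\varepsilon\sqrt{n}\Bigr]\leq (n+1)\cdot C e^{-c\varepsilon\sqrt{n}}\xrightarrow{n\to\infty} 0,
\]
which establishes the convergence in probability. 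Combining this with the a.s.\ convergence $d(o,X_{\TT_0})/\sqrt{n}\to 0$ yields the lemma. I expect no real obstacle beyond being careful with the elementary bound $\mathbf{k}(n)\leq n$ and the identification of $n-\TT_{\mathbf{k}(n)}$ as bounded by one renewal increment.
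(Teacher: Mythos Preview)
Your argument is correct and follows essentially the same route as the paper's proof: both bound $d(o,X_n)-\SS_{\mathbf{k}(n)}$ by $\TT_{\mathbf{k}(n)+1}-\TT_{\mathbf{k}(n)}+d(o,X_{\TT_0})$, use $\mathbf{k}(n)\leq n$, and apply a union bound over $k\leq n$ together with the i.i.d.\ structure from Proposition~\ref{prop:Tk-Tk-1-iid}. The only cosmetic difference is in the tail estimate: the paper invokes Markov's inequality on $(\TT_1-\TT_0)^4$ to get a bound of order $n\cdot n^{-2}$, whereas you use the exponential tail $\P[\TT_1-\TT_0>t]\leq Ce^{-ct}$ directly to obtain $(n+1)e^{-c\varepsilon\sqrt{n}}$; both suffice and yours is slightly sharper.
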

\begin{proof}
Let be $\varepsilon >0$.
For $n\in\N$, we have
\begin{eqnarray*}
0& \leq & d(o,X_n)-\SS_{\mathbf{k}(n)} \leq d(X_{\TT_{\mathbf{k}(n)}},X_n) + d(o,X_{\TT_0}) \\
&\leq& n-\TT_{\mathbf{k}(n)} + \TT_0 \leq \TT_{\mathbf{k}(n)+1}-\TT_{\mathbf{k}(n)} +\TT_0. 
\end{eqnarray*}
Since both $\TT_0$ and $\TT_1-\TT_0$ have exponential moments,  we get:
\begin{eqnarray*}
&&\P\Bigl[ d(o,X_n)-\SS_{\mathbf{k}(n)} > \varepsilon \sqrt{n} , \mathbf{k}(n)\geq 1\Bigr]\\
&\leq & \P\Bigl[ \TT_{\mathbf{k}(n)+1}-\TT_{\mathbf{k}(n)}+\TT_0 > \varepsilon \sqrt{n} , \mathbf{k}(n)\geq 1\Bigr]\\
&\leq &  \P\Bigl[ \exists k\in\{1,\ldots,n\}: \TT_{k+1}-\TT_{k}+\TT_0 > \varepsilon \sqrt{n} , \mathbf{k}(n)\geq 1\Bigr]\\
&\leq & \P\Bigl[ \exists k\in\{1,\ldots,n\}: \TT_{k+1}-\TT_{k} > \frac{\varepsilon}{2} \sqrt{n}\Bigr] + \P\Bigl[ \TT_0 > \frac{\varepsilon}{2} \sqrt{n}  \Bigr]\\
&\stackrel{\textrm{Proposition \ref{prop:Tk-Tk-1-iid}}}{\leq} & n\cdot  \P\Bigl[ \TT_{1}-\TT_{0} > \frac{\varepsilon}{2} \sqrt{n}\Bigr] + \P\Bigl[ \TT_0 > \frac{\varepsilon}{2} \sqrt{n}  \Bigr]\\
&\leq & n\cdot  \P\biggl[ (\TT_{1}-\TT_{0})^4 > \frac{\varepsilon^4}{2^4} n^2\biggr] + \P\Bigl[ \TT_0 > \frac{\varepsilon}{2} \sqrt{n}  \Bigr]\\
&\stackrel{\textrm{Markov Inequality}}{\leq} & n\cdot \frac{\mathbb{E}\Bigl[(\TT_1-\TT_0)^4\Bigr]}{\frac{\varepsilon^4}{2^4} n^2} + \frac{\E[\TT_0]}{\frac{\varepsilon}{2} \sqrt{n}} \xrightarrow{n\to\infty} 0.
\end{eqnarray*}
Since $\mathbf{k}(n)\to\infty$ almost surely, we have proven the proposed claim.
\end{proof}
By Billingsley \cite[Theorem 14.4]{billingsley:99}, we have the convergence in distribution
$$
\frac{\widetilde\SS_{\mathbf{k}(n)}}{\bar\sigma_\lambda \cdot \sqrt{\mathbf{k}(n)}} \xrightarrow{\mathcal{D}} N(0,1).
$$
The convergence of (\ref{equ:Tn/n-convergence}) yields 
$$
0\leq \frac{n-\TT_{\mathbf{k}(n)}}{\mathbf{k}(n)} \leq \frac{\TT_{\mathbf{k}(n)+1}-\TT_{\mathbf{k}(n)}}{\mathbf{k}(n)} \xrightarrow{n\to\infty} 0\quad \textrm{almost surely.}
$$
This in turn implies
$$
\lim_{n\to\infty} \frac{n}{\mathbf{k}(n)}= \lim_{n\to\infty}  \frac{n-\TT_{\mathbf{k}(n)}}{\mathbf{k}(n)} +\frac{\TT_{\mathbf{k}(n)}}{\mathbf{k}(n)}=\E[\TT_1-\TT_0] \quad \textrm{almost surely.}
$$
Therefore, an application of the Lemma of Slutsky gives
\begin{equation}\label{equ:drift-CLT-convergence}
\frac{\widetilde\SS_{\mathbf{k}(n)}}{ \sigma_\lambda\cdot \sqrt{n}}
= \frac{\widetilde\SS_{\mathbf{k}(n)}}{ \sigma_\lambda\cdot \sqrt{\mathbf{k}(n)}}\frac{\sqrt{\mathbf{k}(n)}}{\sqrt{n}}
 \xrightarrow{\mathcal{D}} N(0,1),
\end{equation}
where
\begin{equation}\label{equ:variance-lambda}
\sigma_\lambda^2=\frac{\E\Bigl[ \Bigl( d(X_{\TT_{0}},X_{\TT_1}) -(\TT_1-\TT_{0}) \cdot \lambda\Bigr)^2\Bigr]}{\E[\TT_1-\TT_0]}.
\end{equation}

It remains to control the part of $d(o,X_n)- n\cdot\lambda$, which is not ``covered'' by $\widetilde{\SS}_{\mathbf{k}(n)}$. To this end, we need the following proposition:

\begin{Prop}\label{prop:overhang}
For each $\varepsilon>0$,
$$
\lim_{n\to\infty} \P\Bigl[ \Bigl|\widetilde \SS_{\mathbf{k}(n)} - \bigl( d(o,X_n)-n\cdot \lambda\bigr) \Bigr|> \varepsilon \cdot\sqrt{n} \Bigr] =0.
$$
\end{Prop}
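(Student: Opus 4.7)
The plan is to reduce the claim to Lemma \ref{lem:overhang} by means of an elementary algebraic identity. Unfolding the definitions $\widetilde{\SS}_{\mathbf{k}(n)} = \SS_{\mathbf{k}(n)} - (\TT_{\mathbf{k}(n)}-\TT_0)\cdot \lambda$ and $\SS_{\mathbf{k}(n)} = d(o,X_{\TT_{\mathbf{k}(n)}}) - d(o,X_{\TT_0})$, a direct computation gives
\begin{equation*}
\widetilde{\SS}_{\mathbf{k}(n)} - \bigl(d(o,X_n)-n\cdot\lambda\bigr) = -\bigl(d(o,X_n)-\SS_{\mathbf{k}(n)}\bigr)+\lambda\cdot \bigl(n-\TT_{\mathbf{k}(n)}\bigr)+\lambda\cdot \TT_0.
\end{equation*}
By the triangle inequality, it therefore suffices to show that each of the three summands on the right-hand side is $o(\sqrt{n})$ in probability, i.e.\ that its absolute value exceeds $\varepsilon\sqrt{n}/3$ with vanishing probability as $n\to\infty$.

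The first term is already controlled by Lemma \ref{lem:overhang}. For the second, I would use the elementary bound $0\le n-\TT_{\mathbf{k}(n)}\le \TT_{\mathbf{k}(n)+1}-\TT_{\mathbf{k}(n)}$, which is immediate from the definition of $\mathbf{k}(n)$, and then repeat verbatim the union-bound plus Markov-inequality argument from the proof of Lemma \ref{lem:overhang}: since by Proposition \ref{prop:Tk-Tk-1-iid} the increments $\TT_k-\TT_{k-1}$ are i.i.d.\ and all moments of $\TT_1-\TT_0$ exist by Proposition \ref{prop:T1-T0-exp-mom}, a union bound over $k\in\{1,\ldots,n\}$ combined with the fourth-moment Markov inequality yields a bound of order $\mathcal{O}(1/n)$. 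The last term $\lambda\cdot \TT_0/\sqrt{n}\to 0$ almost surely since $\TT_0$ is a.s.\ finite by Proposition \ref{lem:T0-exp-mom}.

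The only real work lies in spotting the correct algebraic decomposition above; once the identity is written down, all three contributions are of the same nature as the quantity already bounded in Lemma \ref{lem:overhang}, so no further obstacle arises and the ingredients collected in Section \ref{sec:last-cone-entry-process} (exponential moments of $\TT_0$ and of $\TT_1-\TT_0$, together with the i.i.d.\ property of the increments) are enough to conclude.
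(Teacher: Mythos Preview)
Your proposal is correct and follows essentially the same route as the paper: the paper writes the same algebraic decomposition (grouping your second and third summands into the single term $\lambda\bigl(n-(\TT_{\mathbf{k}(n)}-\TT_0)\bigr)$), invokes Lemma~\ref{lem:overhang} for the first piece, and handles the remainder via the bound $n-\TT_{\mathbf{k}(n)}\le \TT_{\mathbf{k}(n)+1}-\TT_{\mathbf{k}(n)}$, a union bound over $k\le n$, and the fourth-moment Markov inequality, together with finiteness of $\E[\TT_0]$. Your treatment of the $\TT_0$ contribution via almost sure convergence is slightly cleaner than the paper's Markov bound, but otherwise the arguments coincide.
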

\begin{proof}
Let be $\varepsilon>0$. Recall that
$$
\widetilde \SS_{\mathbf{k}(n)}=\SS_{\mathbf{k}(n)} - (\TT_{\mathbf{k}(n)}-\TT_0)\cdot \lambda.
$$
Therefore, for $n\in\N$, we get
\begin{eqnarray*}
&& \P\Bigl[ \Bigl|\widetilde \SS_{\mathbf{k}(n)}- \bigl( d(o,X_n)-n\cdot \lambda\bigr) \Bigr| > \varepsilon\cdot \sqrt{n},\mathbf{k}(n)\geq 1 \Bigr] \\
&=&  \P\Bigl[ \Bigl| \SS_{\mathbf{k}(n)}- (\TT_{\mathbf{k}(n)}-\TT_0)\cdot \lambda - \bigl( d(o,X_n)-n\cdot \lambda\bigr) \Bigr| > \varepsilon\cdot \sqrt{n},\mathbf{k}(n)\geq 1 \Bigr] \\
&\leq & \P\Bigl[  d(o,X_n) - \SS_{\mathbf{k}(n)}  > \frac{\varepsilon}{2}\cdot \sqrt{n},\mathbf{k}(n)\geq 1 \Bigr] \\
&&\quad + \P\Bigl[ \lambda \cdot \bigl( n- (\TT_{\mathbf{k}(n)}-\TT_0) \bigr)  > \frac{\varepsilon}{2}\cdot \sqrt{n},\mathbf{k}(n)\geq 1 \Bigr].
\end{eqnarray*}
Since $\mathbf{k}(n)\to\infty$ almost surely  Lemma \ref{lem:overhang} yields
$$
\lim_{n\to\infty}  \P\Bigl[  d(o,X_n) - \SS_{\mathbf{k}(n)}  > \frac{\varepsilon}{2}\cdot \sqrt{n},\mathbf{k}(n)\geq 1 \Bigr] =0.
$$
On the other hand side, we have
\begin{eqnarray*}
&& \P\Bigl[ \lambda \cdot \bigl( n- (\TT_{\mathbf{k}(n)}-\TT_0) \bigr)  > \frac{\varepsilon}{2}\cdot \sqrt{n}, \mathbf{k}(n)\geq 1 \Bigr]\\
&\leq &  \P\Bigl[ \lambda \cdot \bigl( \TT_{\mathbf{k}(n)+1}- (\TT_{\mathbf{k}(n)}-\TT_0) \bigr)  > \frac{\varepsilon}{2}\cdot \sqrt{n}, \mathbf{k}(n)\geq 1 \Bigr]\\
&\leq &  \P\Bigl[ \exists k\in\{1,\ldots,n\}:  \lambda \cdot \bigl( \TT_{k+1}- (\TT_{k}-\TT_0) \bigr)  > \frac{\varepsilon}{2}\cdot \sqrt{n} \Bigr]\\
&\leq &  \P\Bigl[ \exists k\in\{1,\ldots,n\}:    \TT_{k+1}- \TT_{k}   > \frac{\varepsilon}{4\lambda}\cdot \sqrt{n} \Bigr] + \P\Bigl[ \TT_0  > \frac{\varepsilon}{4\lambda}\cdot \sqrt{n} \Bigr]\\
&\stackrel{\textrm{Proposition \ref{prop:Tk-Tk-1-iid}}}{\leq} & n\cdot \P\Bigl[     \TT_{1}- \TT_{0}   > \frac{\varepsilon}{4\lambda}\cdot \sqrt{n} \Bigr] + \P\Bigl[ \TT_0  > \frac{\varepsilon}{4\lambda}\cdot \sqrt{n} \Bigr]\\
&=&  n\cdot \P\biggl[     (\TT_{1}- \TT_{0})^4   > \frac{\varepsilon^4}{(4\lambda)^4}\cdot n^2 \biggr] + \P\Bigl[ \TT_0  > \frac{\varepsilon}{4\lambda}\cdot \sqrt{n} \Bigr]\\
&\stackrel{\textrm{Markov Inequality}}{\leq} &  n\cdot \frac{\E\bigl[(\TT_1-\TT_0)^4\bigr]}{\frac{\varepsilon^4}{(4\lambda)^4}\cdot n^2} + \frac{\E[\TT_0]}{\frac{\varepsilon}{4\lambda}\cdot \sqrt{n}} 
\xrightarrow{n\to\infty} 0,
\end{eqnarray*}
where the exponential moments of $\TT_0$ and $\TT_1-\TT_0$ guarantee that the occuring expectations are finite.
Since $\mathbf{k}(n)\to\infty$ almost surely we have finally shown that
$$
\P\Bigl[ \widetilde \SS_{\mathbf{k}(n)} - \bigl( d(o,X_n)-n\cdot \lambda\bigr) > \varepsilon \cdot\sqrt{n} \Bigr] \xrightarrow{n\to\infty} 0.
$$
\end{proof}
Now we can give the proof of Theorem  \ref{th:CLT}:
\begin{proof}[Proof of Theorem \ref{th:CLT}]
Proposition \ref{prop:overhang} and the convergence in (\ref{equ:drift-CLT-convergence}) together with an application of the Lemma of Slutsky finally give the proposed central limit theorem:
$$
\frac{d(o,X_n)-n\cdot \lambda}{\sqrt{n}} = \underbrace{\frac{d(o,X_n)-\widetilde\SS_{\mathbf{k}(n)} - n\cdot \lambda}{\sqrt{n}}}_{\xrightarrow{\P} 0} + \underbrace{\frac{\widetilde\SS_{\mathbf{k}(n)}}{\sqrt{n}}}_{\xrightarrow{\mathcal{D}} N(0,\sigma_\lambda^2)}\xrightarrow{\mathcal{D}} N(0,\sigma_\lambda^2).
$$
It remains to show that $\sigma^2_\lambda$ varies real-analytically if the random walk on $V$ depends on finitely many parameters $p_1,\ldots,p_d$, $d\in\N$. By the formula in (\ref{equ:variance-lambda}) and real-analyticity of $\E[\TT_1-\TT_0]$ (see Proposition \ref{prop:T1-T0-analytic}), it remains to show that the mapping
$$
\mathcal{P}_d \ni (p_1,\ldots,p_d)\mapsto \E\Bigl[ \Bigl( d(X_{\TT_{0}},X_{\TT_1}) -(\TT_1-\TT_{0}) \cdot \lambda\Bigr)^2\Bigr]
$$
varies real-analytically. Since
\begin{eqnarray*}
&& \E\Bigl[ \Bigl( d(X_{\TT_{0}},X_{\TT_1}) -(\TT_1-\TT_{0}) \cdot \lambda\Bigr)^2\Bigr]\\
&=& \E\bigl[  d(X_{\TT_{0}},X_{\TT_1})^2\bigr] - 2\cdot \lambda\cdot  \E\bigl[  d(X_{\TT_{0}},X_{\TT_1}) (\TT_1-\TT_{0})\bigr]+ \lambda^2 \cdot \E\bigl[ (\TT_1-\TT_{0})^2 \bigr]
\end{eqnarray*}
and
$$
\lambda=\frac{\E\bigl[  d(X_{\TT_{0}},X_{\TT_1})\bigr]}{\E[\TT_1-\TT_0]}
$$
it suffices -- in view of Propositions \ref{prop:T1-T0-analytic} and \ref{prop:Dk-iid}.(iii) -- to study the mappings
\begin{eqnarray*}
\mathcal{P}_d \ni (p_1,\ldots,p_d) &\mapsto  & \E\bigl[  (\TT_1-\TT_0)^2\bigr] , \\ 
 \mathcal{P}_d \ni (p_1,\ldots,p_d) &\mapsto  &\E\bigl[  d(X_{\TT_{0}},X_{\TT_1})^2\bigr].
\end{eqnarray*}
The first  mapping varies real-analytically which follows from the proof of  Proposition \ref{prop:T1-T0-analytic}, since the probabilities $ \P\bigl[(\TT_1-\TT_0)^2=n^2\bigr]$ can be written as in  (\ref{equ:n-form}) and
for suffciently small $\delta>0$ we have
\begin{eqnarray*}
&&\frac{\partial}{\partial z} \biggl[\sum_{n\geq 1} \P\bigl[(\TT_1-\TT_0)^2=n^2\bigr]\cdot z^{n^2} \biggr]\Biggl|_{z=1+\delta}\\
&=&\frac{\partial^2}{\partial^2 z} \biggl[\sum_{n\geq 1} \P\bigl[\TT_1-\TT_0=n\bigr]\cdot z^{n} \biggr]\Biggl|_{z=1+\delta} +\frac{\partial}{\partial z} \biggl[\sum_{n\geq 1} \P\bigl[\TT_1-\TT_0=n\bigr]\cdot z^{n} \biggr]\Biggl|_{z=1+\delta}.
\end{eqnarray*}
Analogously, one can show with the same reasoning as in  the proof of Proposition \ref{prop:Dk-iid}.(iii) that $\E\bigl[  d(X_{\TT_{0}},X_{\TT_1})^2\bigr]$ varies real-anaytically: once again, the probabilities $\P\bigl[d(X_{\TT_0},X_{\TT_1})^2=N,\TT_1-\TT_0=n\bigr]$ can be written as in  (\ref{equ:form-n-path}) and, for sufficiently small $\delta>0$,
\begin{eqnarray*}
&& \sum_{n,N\in\N}N^2\cdot \P\bigl[d(X_{\TT_0},X_{\TT_1})^2=N^2,\TT_1-\TT_0=n\bigr]\cdot (1+\delta)^n\\
&\leq & \sum_{n\in\N}n^2\cdot \P\bigl[\TT_1-\TT_0=n\bigr]\cdot (1+\delta)^n<\infty.
\end{eqnarray*}
This  finishes the proof of Theorem \ref{th:CLT}.
\end{proof}

\section{Central Limit Theorem w.r.t. the Block Length}
\label{sec:CLT-2}

The reasoning for the proof of the Central Limit Theorem for the rate of escape w.r.t. the block length, Theorem \ref{th:CLT-2}, is very similar to the proof of Theorem \ref{th:CLT}. We replace in Section \ref{sec:CLT} the graph metric $d(\cdot,\cdot)$ by the block length $\Vert \cdot\Vert$ and redefine for $k\in \N$
$$
\DD_k := \Vert X_{\TT_k}\Vert - \Vert X_{\TT_{k-1}}\Vert = (2k+\tau) -\bigl(2(k-1)+\tau\bigr)=2,
$$
and set
$$
\widetilde \DD_k:= \DD_k - (\TT_k-\TT_{k-1})\cdot\ell = 2- (\TT_k-\TT_{k-1})\cdot\ell.
$$
In particular, the sequence $\bigl(\widetilde\DD_k\bigr)_{k\in\N}$ is again i.i.d., which follows now directly from Proposition \ref{prop:Tk-Tk-1-iid}.
We have
$$
\Vert X_{\TT_k}\Vert = 2k+\tau = \Vert X_{\TT_0} \Vert + \sum_{j=1}^k \DD_k.
$$
The analogue to Corollary \ref{cor:ell-formula} is then given by
\begin{eqnarray*}
\ell &=&\lim_{n\to\infty} \frac{\Vert X_n\Vert}{n}= \lim_{n\to\infty}\frac{\Vert X_{\TT_{\mathbf{k}(n)}}\Vert}{\TT_{\mathbf{k}(n)}}= \lim_{n\to\infty}\underbrace{\frac{\Vert X_{\TT_{\mathbf{k}(n)}}\Vert}{\mathbf{k}(n)}}_{\to 2} \underbrace{\frac{\mathbf{k}(n)}{\TT_{\mathbf{k}(n)}}}_{\to 1/\E[\TT_1-\TT_0]}\\
&=& \frac{2}{\E[\TT_1-\TT_0]} \quad \textrm{ almost surely.}
\end{eqnarray*}
Moreover:
\begin{Lemma}
We have $\E\bigl[\widetilde\DD_1\bigr]=0$ and
$$
\bar\sigma_\ell^2 := \mathrm{Var}\bigl(\widetilde\DD_1\bigr)= \E\Bigl[\bigl(2-(\TT_1-\TT_0)\cdot\ell\bigr)^2\Bigr]\in (0,\infty).
$$
\end{Lemma}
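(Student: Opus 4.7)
The plan is to proceed in exactly the same spirit as Lemma \ref{lem:variance-lambda>0}, but with the simplification that $\DD_k$ is now the deterministic constant $2$.

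First I would compute $\mathbb{E}[\widetilde\DD_1]$ directly from the formula derived above for $\ell$. Since $\widetilde\DD_1 = 2 - (\TT_1-\TT_0)\cdot\ell$ and $\ell = 2/\mathbb{E}[\TT_1-\TT_0]$, we get
$$
\mathbb{E}[\widetilde\DD_1] = 2 - \ell\cdot \mathbb{E}[\TT_1-\TT_0] = 2 - 2 = 0.
$$
Once centering is established, the identity $\bar\sigma_\ell^2 = \mathbb{E}\bigl[(2-(\TT_1-\TT_0)\ell)^2\bigr]$ is just the definition of variance for a centered random variable.

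For finiteness, I would use the bound
$$
\bigl|2 - (\TT_1-\TT_0)\cdot\ell\bigr| \leq 2 + \ell\cdot(\TT_1-\TT_0),
$$
so that $\bar\sigma_\ell^2$ is controlled by $\mathbb{E}\bigl[(\TT_1-\TT_0)^2\bigr]$, which is finite by Proposition \ref{prop:T1-T0-exp-mom} (exponential moments of $\TT_1-\TT_0$).

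The only genuine content is strict positivity, which amounts to showing that $\TT_1-\TT_0$ is not almost surely constant (since $\ell>0$). Here I would reuse the explicit two-path construction from the proof of Lemma \ref{lem:variance-lambda>0}. Take $x_0\in V_1^\times$ with $p_1^{(m_0)}(o_1,x_0)>0$ and $p_1^{(n_0)}(x_0,x_0)>0$ for some $m_0,n_0\in\mathbb{N}$ (guaranteed by the standing assumption), and $y\in V_2^\times$ with $p_2(o_2,y)>0$. Both events
$$
\bigl\{X_{\TT_0}=x_0,\, X_{\TT_1}=x_0yx_0,\, \TT_1-\TT_0=m_0+1\bigr\}
$$
and
$$
\bigl\{X_{\TT_0}=x_0,\, X_{\TT_1}=x_0yx_0,\, \TT_1-\TT_0=m_0+1+n_0\bigr\}
$$
have strictly positive probability (by inserting, in the second case, the loop of length $n_0$ at $x_0yx_0$ inside $C(x_0y)$, which remains in $C(x_0yx_0)$ with probability $1-\xi_1$). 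These yield two different values of $\TT_1-\TT_0$, hence two different values of $2-(\TT_1-\TT_0)\ell$ since $\ell>0$, so the random variable is non-degenerate and $\bar\sigma_\ell^2>0$.

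No step here is a serious obstacle: the hardest ingredient (existence of exponential moments) has already been handled in Proposition \ref{prop:T1-T0-exp-mom}, and the two-path argument is literally the one used in Lemma \ref{lem:variance-lambda>0}.
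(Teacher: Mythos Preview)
Your proposal is correct and follows essentially the same approach as the paper's proof: centering via $\ell = 2/\mathbb{E}[\TT_1-\TT_0]$, finiteness via the exponential moments of $\TT_1-\TT_0$, and strict positivity by invoking the two-path construction from Lemma~\ref{lem:variance-lambda>0}. The paper's own proof is in fact terser than yours---for positivity it simply writes ``The same reasoning as in the proof of Lemma~\ref{lem:variance-lambda>0} shows that $\bar\sigma_\ell^2>0$''---so your spelled-out version is, if anything, more explicit.
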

\begin{proof}
We have
$$
\E\bigl[\widetilde\DD_1\bigr]= 2- \ell \cdot \E[\TT_1-\TT_0]= 2- \frac{2}{\E[\TT_1-\TT_0]} \cdot \E[\TT_1-\TT_0]=0,
$$
from which the proposed formula for $\bar\sigma_\ell^2$ follows. Furthermore,
since $\TT_1-\TT_0$ has exponential moments, we have $\bar\sigma_\ell^2<\infty$. The same reasoning as in the proof of Lemma \ref{lem:variance-lambda>0} shows that $\bar\sigma_\ell^2>0$. 
\end{proof}
The analogous definitions of $\SS_k$ and $\widetilde \SS_k$ now become
$$
\SS_k:=\sum_{j=1}^k \DD_j = 2k
$$
and
$$
\widetilde \SS_k:=\sum_{j=1}^k \widetilde \DD_j =2k-(\TT_k-\TT_0)\cdot \ell.
$$
We have then for $n\in\N$:
$$
0\leq \Vert X_n\Vert - \SS_{\mathbf{k}(n)} \leq \TT_{\mathbf{k}(n)+1}-\TT_{\mathbf{k}(n)} +\tau \leq \TT_{\mathbf{k}(n)+1}-\TT_{\mathbf{k}(n)} +\TT_0.
$$
Then we can show completely analogously to the proof of Lemma \ref{lem:overhang} that
\begin{equation}\label{equ:overhang-block-length}
\frac{\Vert X_n\Vert - \SS_{\mathbf{k}(n)}}{\sqrt{n}}\xrightarrow{\P} 0.
\end{equation}
Since $(\TT_k-\TT_{k-1})_{k\in \N}$ is i.i.d., we obtain analogously to the reasoning in Section \ref{sec:CLT}:
\begin{equation}\label{equ:convergence-block-length}
\frac{\widetilde\SS_{\mathbf{k}(n)}}{ \sigma_\ell \cdot \sqrt{n}} \xrightarrow{\mathcal{D}} N(0,1),
\end{equation}
where
\begin{equation}\label{equ:variance-ell}
\sigma_\ell^2= \frac{\E\Bigl[\bigl(2-(\TT_1-\TT_0)\cdot \ell\bigr)^2\Bigr]}{\E[\TT_1-\TT_0]}.
\end{equation}
It remains to prove the analogue of Proposition \ref{prop:overhang}:
\begin{Prop}\label{prop:overhang-block-length}
For every $\varepsilon >0$,
$$
\lim_{n\to\infty} \P\Bigl[ \Bigl|\widetilde \SS_{\mathbf{k}(n)} - \bigl( \Vert X_n\Vert-n\cdot \ell\bigr) \Bigr|>\varepsilon \cdot  \sqrt{n}\Bigr] = 0.
$$
\end{Prop}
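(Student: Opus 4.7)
The plan is to follow the blueprint of the proof of Proposition \ref{prop:overhang} almost verbatim, since everything carries over once the graph metric $d(\cdot,\cdot)$ is replaced by the block length $\Vert\cdot\Vert$. Starting from the identity
$$
\widetilde \SS_{\mathbf{k}(n)} - \bigl(\Vert X_n\Vert - n\cdot\ell\bigr) = \bigl(\SS_{\mathbf{k}(n)} - \Vert X_n\Vert\bigr) + \ell\cdot\bigl(n - (\TT_{\mathbf{k}(n)}-\TT_0)\bigr),
$$
the triangle inequality reduces matters to showing that each of the two terms on the right, divided by $\sqrt{n}$, converges to $0$ in probability.

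For the first summand, the convergence
$$
\frac{\Vert X_n\Vert - \SS_{\mathbf{k}(n)}}{\sqrt{n}}\xrightarrow{\P} 0
$$
was already recorded in (\ref{equ:overhang-block-length}), so for every $\varepsilon>0$,
$$
\P\Bigl[\bigl|\Vert X_n\Vert - \SS_{\mathbf{k}(n)}\bigr| > \tfrac{\varepsilon}{2}\sqrt{n},\,\mathbf{k}(n)\geq 1\Bigr]\xrightarrow{n\to\infty}0.
$$

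For the second summand, I would use the deterministic bound
$$
0\leq n - \bigl(\TT_{\mathbf{k}(n)}-\TT_0\bigr) \leq \bigl(\TT_{\mathbf{k}(n)+1}-\TT_{\mathbf{k}(n)}\bigr) + \TT_0,
$$
which follows from $\TT_{\mathbf{k}(n)}\leq n < \TT_{\mathbf{k}(n)+1}$. Then
\begin{align*}
&\P\Bigl[\ell\cdot\bigl(n-(\TT_{\mathbf{k}(n)}-\TT_0)\bigr) > \tfrac{\varepsilon}{2}\sqrt{n},\,\mathbf{k}(n)\geq 1\Bigr] \\
&\quad \leq \P\Bigl[\exists k\in\{1,\ldots,n\}:\TT_{k+1}-\TT_{k}>\tfrac{\varepsilon}{4\ell}\sqrt{n}\Bigr] + \P\Bigl[\TT_0>\tfrac{\varepsilon}{4\ell}\sqrt{n}\Bigr],
\end{align*}
and a union bound combined with Proposition \ref{prop:Tk-Tk-1-iid} yields
$$
\P\Bigl[\exists k\leq n:\TT_{k+1}-\TT_k>\tfrac{\varepsilon}{4\ell}\sqrt{n}\Bigr] \leq n\cdot\P\Bigl[\TT_1-\TT_0>\tfrac{\varepsilon}{4\ell}\sqrt{n}\Bigr].
$$
Applying Markov's inequality to $(\TT_1-\TT_0)^4$ and to $\TT_0$, and invoking the exponential moments of $\TT_1-\TT_0$ and $\TT_0$ from Propositions \ref{prop:T1-T0-exp-mom} and \ref{lem:T0-exp-mom}, the right-hand side is bounded by
$$
n\cdot \frac{\E\bigl[(\TT_1-\TT_0)^4\bigr]}{(\varepsilon/4\ell)^4 n^2} + \frac{\E[\TT_0]}{(\varepsilon/4\ell)\sqrt{n}} \xrightarrow{n\to\infty} 0.
$$
Combining the two estimates and using that $\mathbf{k}(n)\to\infty$ almost surely gives the claim. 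There is no real obstacle here: the only point that might conceivably differ from the drift case is the availability of the auxiliary estimate (\ref{equ:overhang-block-length}), but this was established by the same $4$th-moment/union-bound argument as in Lemma \ref{lem:overhang}, and the finiteness of $\E[(\TT_1-\TT_0)^4]$ and $\E[\TT_0]$ required throughout is supplied by the exponential-moment results of Section \ref{sec:last-cone-entry-process}.
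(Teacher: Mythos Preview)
Your proof is correct and follows essentially the same approach as the paper: both decompose $\widetilde\SS_{\mathbf{k}(n)}-(\Vert X_n\Vert-n\ell)$ into the two pieces $\SS_{\mathbf{k}(n)}-\Vert X_n\Vert$ and $\ell\cdot(n-(\TT_{\mathbf{k}(n)}-\TT_0))$, handle the first via (\ref{equ:overhang-block-length}), and refer the second back to the moment/union-bound argument from Proposition~\ref{prop:overhang}. The paper in fact abbreviates the second half to ``The rest follows as in the proof of Proposition~\ref{prop:overhang}'', which is exactly what you have spelled out.
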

\begin{proof}
Observe that
$$
\widetilde \SS_{\mathbf{k}(n)}=2\cdot \mathbf{k}(n)  - (\TT_{\mathbf{k}(n)}-\TT_0)\cdot \ell.
$$
Therefore, for each $n\in\N$ and every $\varepsilon>0$, we get
\begin{eqnarray*}
&& \P\Bigl[ \Bigl|\widetilde \SS_{\mathbf{k}(n)}- \bigl( \Vert X_n\Vert -n\cdot \ell\bigr) \Bigr| > \varepsilon\cdot \sqrt{n},\mathbf{k}(n)\geq 1 \Bigr] \\
&=&  \P\Bigl[ \Bigl| 2\cdot \mathbf{k}(n)   - (\TT_{\mathbf{k}(n)}-\TT_0)\cdot \ell- \bigl( \Vert X_n\Vert -n\cdot \ell\bigr) \Bigr| > \varepsilon\cdot \sqrt{n},\mathbf{k}(n)\geq 1 \Bigr] \\
&\leq & \P\Bigl[  \Vert X_n\Vert - 2\cdot \mathbf{k}(n)  > \frac{\varepsilon}{2}\cdot \sqrt{n},\mathbf{k}(n)\geq 1 \Bigr] \\
&&\quad + \P\Bigl[ \ell \cdot \bigl( n- (\TT_{\mathbf{k}(n)}-\TT_0) \bigr)  > \frac{\varepsilon}{2}\cdot \sqrt{n},\mathbf{k}(n)\geq 1 \Bigr].
\end{eqnarray*}
Once again, by   (\ref{equ:overhang-block-length}), 
$$
\frac{ \Vert X_n\Vert - 2\cdot \mathbf{k}(n)}{\sqrt{n}}=\frac{\Vert X_n\Vert -\SS_{\mathbf{k}(n)}}{\sqrt{n}} \xrightarrow{\P} 0.
$$
The rest follows as in the proof of Proposition \ref{prop:overhang}.
\end{proof}

\begin{proof}[Proof of Theorem \ref{th:CLT-2}]
Proposition \ref{prop:overhang-block-length} and the convergence in (\ref{equ:convergence-block-length}) together with an application of the Lemma of Slutsky finally give the proposed central limit theorem:
$$
\frac{\Vert X_n\Vert -n\cdot \ell}{\sqrt{n}} = \underbrace{\frac{\Vert X_n\Vert -\widetilde\SS_{\mathbf{k}(n)} - n\cdot \ell}{\sqrt{n}}}_{\xrightarrow{\P} 0} + \underbrace{\frac{\widetilde\SS_{\mathbf{k}(n)}}{\sqrt{n}}}_{\xrightarrow{\mathcal{D}} N(0, \sigma_\ell^2)}\xrightarrow{\mathcal{D}} N(0, \sigma_\ell^2).
$$
If $P$ depends on finitely many parameters $p_1,\ldots,p_d\in(0,1)$ only, then real-analyticity of $\sigma_\ell^2$ follows directly from the formula (\ref{equ:variance-ell}); recall that we have shown in the the proof of Theorem \ref{th:CLT} that $\E\bigl[ (\TT_1-\TT_0)^2\bigr]$ varies real-analytically.
%
%
\end{proof}

\section{Central Limit Theorem for the Entropy}
\label{sec:CLT-3}

The reasoning for the proof of the Central Limit Theorem for the asymptotic entropy, Theorem \ref{th:CLT-3}, is also similar to the proof of Theorem \ref{th:CLT} but needs some additional effort. We replace in Section \ref{sec:CLT} the graph metric $d(\cdot,\cdot)$ by the \textit{distance function} $d_L(o,x):=-\log L(o,x|1)$ for $x\in V$. Moreover, we assume that $P$ is $\varepsilon_0$-uniform for some $\varepsilon_0>0$. By \cite[Theorem 3.7]{gilch:11}, the asymptotic entropy $h$ satisfies
$$
h=\lim_{n\to\infty} \frac{d_L(o,X_n)}{n}= \lim_{n\to\infty}-\frac1n \log L(o,X_n|1) \quad \textrm{almost surely.}
$$
If $X_{\TT_{k-1}}=x\in V$ and $X_{\TT_{k}}=xy_kx_k\in V$ for $k\in\N$, $x_k\in V_1^\times$ and $y_k\in V_2^\times$, then we set $\mathbf{W}_k:=y_kx_k$.
Due to the structure of the free product any path from $o$ to $X_n$ has to pass through $X_{\TT_1},\ldots, X_{\TT_{\mathbf{k}(n)}}$. By
(\ref{eq:L-L}), we can rewrite
$$
L(o,X_n|1) = L(o,X_{\TT_0}|1)\cdot \prod_{k=1}^{\mathbf{k}(n)} \underbrace{L(X_{\TT_{k-1}},X_{\TT_k}|1)}_{=L(o,\mathbf{W}_k|1)}\cdot L(X_{\TT_{\mathbf{k}(n)}},X_n|1),
$$
which in turn yields
\begin{equation}\label{equ:dL-decomposition}
d_L(o,X_n) = d_L(o,X_{\TT_0})+ \sum_{k=1}^{\mathbf{k}(n)} d_L(o,\mathbf{W}_k|1) + d_L(X_{\TT_{\mathbf{k}(n)}},X_n).
\end{equation}
We remark that, for $x_1\in V_1^\times,y_1\in V_2^\times$, and  small $\delta\geq 0$ with $\xi_1(1+\delta)<1$ and $\xi_2(1+\delta)<1$, we have due to (\ref{lem:4-1equ-2})
\begin{eqnarray*}
\log L(o,y_1x_1|1+\delta) &\leq& \log \frac{1}{\bigl(1-\xi_1(1+\delta)\bigr)\bigl(1-\xi_2(1+\delta)\bigr)\nonumber}=:C_L(\delta)<\infty.\label{equ:L-bounded}
\end{eqnarray*}
For $\delta=0$, we just write $C_L:=C_L(0)$. 
\par
On the other hand side, $L(o,y_1x_1|1)\geq \varepsilon_0^{d(o,y_1x_1)}$. Hence,
\begin{equation}\label{equ:logL-bound}
\bigl|-\log L(o,y_1x_1|1)\bigr| \leq \max\bigl\{ (-\log \varepsilon_0) \cdot d(o,y_1x_1), C_L\bigr\}.
\end{equation}
We redefine for $k\in \N$
$$
\DD_k :=-\log L(o,\mathbf{W}_k|1),
$$
and set
$$
\widetilde \DD_k:= \DD_k - (\TT_k-\TT_{k-1})\cdot h = -\log L(o,\mathbf{W}_k|1)- (\TT_k-\TT_{k-1})\cdot h.
$$

\begin{Prop}~ \label{prop:D-iid-entropy}
\begin{enumerate}[label=(\roman*)]
\item $(\DD_k)_{k\in\N}=\bigl(d_L(o,\mathbf{W}_k|1)\bigr)_{k\in\N}$ is an i.i.d. sequence.
\item $\bigl(\widetilde \DD_k\bigr)_{k\in\N}$ is an i.i.d. sequence.
\end{enumerate}
\end{Prop}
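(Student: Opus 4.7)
The strategy is to reduce both statements to a single enhanced version of Proposition~\ref{prop:Tk-Tk-1-iid}: namely, that the sequence of pairs $\bigl((\mathbf{W}_k,\,\TT_k-\TT_{k-1})\bigr)_{k\in\N}$ is i.i.d. Once this is established, (i) follows because $\DD_k=-\log L(o,\mathbf{W}_k|1)$ is the image of $\mathbf{W}_k$ under the fixed deterministic map $w\mapsto -\log L(o,w|1)$ on $V$, and (ii) follows because $\widetilde\DD_k=-\log L(o,\mathbf{W}_k|1)-(\TT_k-\TT_{k-1})\cdot h$ is a deterministic function of $(\mathbf{W}_k,\TT_k-\TT_{k-1})$; measurable images of i.i.d. sequences are i.i.d.

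The plan for the joint i.i.d. claim is to retrace the proof of Proposition~\ref{prop:Tk-Tk-1-iid} with the extra bookkeeping of the increment shape. Recall that for every $k\geq 1$ the word length grows by exactly $2$ between $\TT_{k-1}$ and $\TT_k$, and the last letter of $X_{\TT_k}$ lies in $V_1^\times$; hence $\mathbf{W}_k$ always has the form $y_k x_k$ with $y_k\in V_2^\times$, $x_k\in V_1^\times$. For the one-dimensional marginal, I fix $m\in\N$ and $w=yx\in V_2^\times V_1^\times$ and compute $\P[\TT_k-\TT_{k-1}=m,\,\mathbf{W}_k=w]$ by decomposing according to the pair $(\TT_{k-1},X_{\TT_{k-1}})$, exactly as in Proposition~\ref{prop:Tk-Tk-1-iid}. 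The decisive step is Lemma~\ref{lem:cone-probabilities}: conditionally on $X_{\TT_{k-1}}=x'$, the piece of trajectory inside $C(x')$ from $x'$ to $x'w$ has the same law as a trajectory from $o$ to $w$ avoiding $V_1^\times$ until the last step. After this shift, the sum over starting configurations factors out and collapses to $\P[\TT_{k-1}<\infty]=1$, leaving precisely the expression for $\P[\TT_1-\TT_0=m,\,\mathbf{W}_1=w]$.

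Independence is handled analogously to the tail of the proof of Proposition~\ref{prop:Tk-Tk-1-iid}. For fixed $k\in\N$, $m_1,\dots,m_k\in\N$ and admissible words $w_1,\dots,w_k$ of the form $y_jx_j$, I decompose the event $\bigcap_{j=1}^k\{\TT_j-\TT_{j-1}=m_j,\,\mathbf{W}_j=w_j\}$ according to all the pairs $(\TT_j,X_{\TT_j})$, use Lemma~\ref{lem:cone-probabilities} successively to shift the increment at level $j$ into the cone rooted at $X_{\TT_{j-1}}$, and then collapse the intermediate sums via $\P[\TT_{j-1}<\infty]=1$. The resulting product is $\prod_{j=1}^k\P[\TT_1-\TT_0=m_j,\,\mathbf{W}_1=w_j]$, yielding both the identical distribution and independence of the pairs, and hence the proposition.

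There is no substantive obstacle beyond this: the argument is a routine refinement of Proposition~\ref{prop:Tk-Tk-1-iid}, the only mild point of care being to keep track of the increment word $\mathbf{W}_j=y_jx_j$ while performing the cone shifts so that the factorization aligns correctly after each application of Lemma~\ref{lem:cone-probabilities}.
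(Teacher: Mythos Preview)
Your proposal is correct and proceeds by the same mechanism as the paper: the cone-shift decomposition via Lemma~\ref{lem:cone-probabilities}, collapsing the sums over starting configurations to $\P[\TT_{k-1}<\infty]=1$. The only difference is packaging: the paper reruns the proof of Proposition~\ref{prop:Dk-iid} twice, replacing the constraint $d(o,y_jx_j)=n_j$ first by $d_L(o,y_jx_j)=z_j$ and then by $d_L(o,y_jx_j)-m_j h=z_j$, whereas you prove the single stronger statement that the pairs $(\mathbf{W}_k,\TT_k-\TT_{k-1})$ are i.i.d.\ and then project---a cleaner organization that yields both parts at once.
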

\begin{proof}
\begin{enumerate}[label=(\roman*)]
\item If $z\in\mathbb{R}$ with $\P[d_L(o,\mathbf{W}_k)=z]>0$, then we can show $\P[d_L(o,\mathbf{W}_k)=z]=\P[d_L(o,\mathbf{W}_1)=z]$ for all $k\in\N$ completely analogously as in  the proof of Proposition \ref{prop:Dk-iid}.(i) by replacing the condition $d(o,y_1x_1)=n$ with  $d_L(o,y_1x_1)=z$.
\par
Similarily, if we have $k\in\N$ and $z_1,\ldots,z_k\in\mathbb{R}$ satisfying 
$$
\P\biggl[\bigcap_{j=1}^k [d_L(o,\mathbf{W}_j)=z_j]\biggr]>0,
$$ 
then we can show 
$$
\P\biggl[\bigcap_{j=1}^k [d_L(o,\mathbf{W}_j)=z_j]\biggr]=\prod_{j=1}^k \P\bigl[ d_L(o,\mathbf{W}_j)=z_j\bigr]
$$
completely analogously as in  the proof of Proposition \ref{prop:Dk-iid}.(i) by replacing the conditions $d(o,y_jx_j)=n_j$ with  $d_L(o,y_jx_j)=z_j$.
\item Once again, the proof works completely analogously as the  proof of Proposition \ref{prop:Dk-iid}.(ii) by replacing the conditions $d(o,y_jx_j)-n\cdot \lambda=z_j$ with  $d_L(o,y_jx_j)-n\cdot h=z_j$.
\end{enumerate}
\end{proof}

\begin{Lemma}\label{lem:D1-exp-moments-entropy}
$\DD_1$ has exponential moments.
\end{Lemma}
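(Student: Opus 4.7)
The plan is to dominate $\DD_1$ by a constant multiple of $\TT_1-\TT_0$ and then invoke Proposition \ref{prop:T1-T0-exp-mom}. The ingredient that makes this work is the $\varepsilon_0$-uniformity of $P$.

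First I would observe that $\DD_1\ge 0$, since $L(o,\mathbf{W}_1|1)\le 1$ as it is a sub-probability. For the upper bound, I would use $\varepsilon_0$-uniformity: any path from $o$ to $\mathbf{W}_1$ of length equal to $d(o,\mathbf{W}_1)$ has probability at least $\varepsilon_0^{d(o,\mathbf{W}_1)}$, and such a path contributes to $L(o,\mathbf{W}_1|1)$ (since by shortest-path minimality it does not return to $o$ in between). Hence
\[
L(o,\mathbf{W}_1|1)\ge \varepsilon_0^{d(o,\mathbf{W}_1)},
\qquad\text{so}\qquad
\DD_1=-\log L(o,\mathbf{W}_1|1)\le (-\log \varepsilon_0)\cdot d(o,\mathbf{W}_1).
\]

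Next I would bound the distance by the number of steps. Between times $\TT_0$ and $\TT_1$ the random walk takes exactly $\TT_1-\TT_0$ steps, and because any path from $X_{\TT_0}$ to $X_{\TT_1}=X_{\TT_0}\mathbf{W}_1$ in the transition graph $\mathcal{X}$ must stay in $C(X_{\TT_0})$ after time $\TT_0$ (by the definition of the exit time and by the tree-like gluing structure of free products), one has $d(X_{\TT_0},X_{\TT_1})=d(o,\mathbf{W}_1)$ and
\[
d(o,\mathbf{W}_1)=d(X_{\TT_0},X_{\TT_1})\le \TT_1-\TT_0.
\]
Combining gives $\DD_1\le (-\log\varepsilon_0)(\TT_1-\TT_0)$ almost surely.

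Finally, I would conclude using Proposition \ref{prop:T1-T0-exp-mom}: there exists $\delta_0>0$ with $\E\bigl[(1+\delta_0)^{\TT_1-\TT_0}\bigr]<\infty$. Choose $t>0$ so small that $t\cdot(-\log\varepsilon_0)\le \log(1+\delta_0)$. Then
\[
\E\bigl[e^{t\DD_1}\bigr]\le \E\bigl[e^{t(-\log\varepsilon_0)(\TT_1-\TT_0)}\bigr]\le \E\bigl[(1+\delta_0)^{\TT_1-\TT_0}\bigr]<\infty,
\]
which proves existence of exponential moments of $\DD_1$. There is no serious obstacle here; the only subtle point is the identity $d(X_{\TT_0},X_{\TT_1})=d(o,\mathbf{W}_1)$, which follows from the fact that after time $\TT_0$ the walk never leaves $C(X_{\TT_0})$ and the cone $C(X_{\TT_0})$ is glued to the rest of $\mathcal{X}$ only at the single vertex $X_{\TT_0}$.
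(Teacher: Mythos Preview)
Your approach is essentially the paper's: dominate $\DD_1$ by a constant times $\TT_1-\TT_0$ via $\varepsilon_0$-uniformity and then quote Proposition~\ref{prop:T1-T0-exp-mom}. The upper bound $\DD_1\le(-\log\varepsilon_0)(\TT_1-\TT_0)$ and the step $d(o,\mathbf W_1)\le \TT_1-\TT_0$ are correct and match the paper exactly.

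There is, however, a slip in your lower bound. In this paper $L(x,y|z)$ is the \emph{last-exit} generating function, as one sees from the identity $G(x,y|z)=G(x,x|z)\,L(x,y|z)$ in~(\ref{eq:G-L}); thus $L(o,\mathbf W_1|1)$ is not a sub-probability and may exceed $1$, so the claim $\DD_1\ge 0$ is unjustified. The paper deals with this using the uniform bound~(\ref{lem:4-1equ-2}), which yields
\[
L(o,y_1x_1|1)\le \frac{1}{(1-\xi_1)(1-\xi_2)}=:e^{C_L},
\]
hence $\DD_1\ge -C_L$ for a deterministic constant $C_L$. With this replacement your argument goes through unchanged: $|\DD_1|\le \max\{C_L,(-\log\varepsilon_0)(\TT_1-\TT_0)\}$, and exponential moments of $\TT_1-\TT_0$ give the claim.
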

\begin{proof}
%
%
%
%
We have
\begin{eqnarray*}
0&\leq& \bigl|-\log L(o,\mathbf{W}_1|1)\bigr| \stackrel{(\ref{equ:logL-bound}) }{\leq} \max\bigl\{C_L,-\log  \varepsilon_0^{d(o,\mathbf{W}_1)}\bigr\} \\
&\leq& \max\bigl\{C_L,-\log  \varepsilon_0^{\TT_1-\TT_0}\bigr\}= \max\bigl\{C_L,(-\log\varepsilon_0) \cdot (\TT_1-\TT_0)\bigr\}.
\end{eqnarray*}
Since $C_L$, seen as a function in $p_1,\ldots,p_d$, 
and $\TT_1-\TT_0$ have exponential moments, the claim follows immediately.
\end{proof}

The last lemma guarantees that $\E[\DD_1]<\infty$, and together with Proposition \ref{prop:D-iid-entropy} and (\ref{equ:dL-decomposition}) we obtain then the analogue to Corollary \ref{cor:ell-formula}:
\begin{eqnarray}
h &=&\lim_{n\to\infty} \frac{d_L(o,X_n)}{n}= \lim_{n\to\infty}\frac{d_L(o, X_{\TT_{\mathbf{k}(n)}})}{\TT_{\mathbf{k}(n)}}= \lim_{n\to\infty}\underbrace{\frac{d_L(o, X_{\TT_{\mathbf{k}(n)}})}{\mathbf{k}(n)}}_{\to \E[\DD_1]} \underbrace{\frac{\mathbf{k}(n)}{\TT_{\mathbf{k}(n)}}}_{\to 1/\E[\TT_1-\TT_0]}\nonumber\\
&=& \frac{\E[\DD_1]}{\E[\TT_1-\TT_0]} \quad \textrm{ almost surely.}\label{equ:entropy-formula-expectations}
\end{eqnarray}
Moreover:
\begin{Lemma}
We have $\E\bigl[\widetilde\DD_1\bigr]=0$ and
$$
\bar\sigma_h^2 := \mathrm{Var}\bigl(\widetilde\DD_1\bigr)= \E\Bigl[\Bigl(-\log L(o,\mathbf{W}_1|1)-(\TT_1-\TT_0)\cdot h\Bigr)^2\Bigr]\in (0,\infty).
$$
\end{Lemma}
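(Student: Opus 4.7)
The plan is to imitate Lemma \ref{lem:variance-lambda>0}, replacing the drift $\lambda$ by the entropy $h$ and $d(X_{\TT_0},X_{\TT_1})$ by $-\log L(o,\mathbf{W}_1|1)$. The argument splits into three steps: centering, finiteness, and strict positivity.

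For centering, I would apply the formula (\ref{equ:entropy-formula-expectations}) directly. Since Lemma \ref{lem:D1-exp-moments-entropy} gives $\E[\DD_1]<\infty$, and Proposition \ref{prop:T1-T0-exp-mom} gives $\E[\TT_1-\TT_0]<\infty$, we may compute
\[
\E[\widetilde\DD_1]=\E[\DD_1]-h\cdot\E[\TT_1-\TT_0]=\E[\DD_1]-\frac{\E[\DD_1]}{\E[\TT_1-\TT_0]}\cdot\E[\TT_1-\TT_0]=0.
\]
The identity $\bar\sigma_h^2=\E\bigl[(-\log L(o,\mathbf{W}_1|1)-(\TT_1-\TT_0)h)^2\bigr]$ is then an immediate consequence of the definition of the variance.

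For finiteness, I would use that both $\DD_1=-\log L(o,\mathbf{W}_1|1)$ and $\TT_1-\TT_0$ have exponential moments (Lemma \ref{lem:D1-exp-moments-entropy} and Proposition \ref{prop:T1-T0-exp-mom}, respectively). Expanding the square and applying Cauchy--Schwarz to the mixed term yields $\bar\sigma_h^2\leq 2\E[\DD_1^2]+2h^2\E[(\TT_1-\TT_0)^2]<\infty$.

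The main obstacle is strict positivity. Here I would follow the construction used for the drift: fix $x_0\in V_1^\times$ with $p_1^{(m_0)}(o_1,x_0)>0$ and $p_1^{(n_0)}(x_0,x_0)>0$ (granted by the basic assumption on the existence of a state carrying a return loop), and pick $y\in V_2^\times$ with $p_2(o_2,y)>0$. Both of the events
\[
\Bigl\{X_{\TT_0}=x_0,\;X_{\TT_1}=x_0yx_0,\;\TT_1-\TT_0=m_0+1\Bigr\}
\]
and
\[
\Bigl\{X_{\TT_0}=x_0,\;X_{\TT_1-n_0}=X_{\TT_1}=x_0yx_0,\;\TT_1-\TT_0=m_0+1+n_0\Bigr\}
\]
occur with strictly positive probability, exactly as in the proof of Lemma \ref{lem:variance-lambda>0} (the second one is obtained from the first by inserting a loop of length $n_0$ at $x_0yx_0$ inside $C(x_0y)$). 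On both events we have $\mathbf{W}_1=yx_0$, so $-\log L(o,\mathbf{W}_1|1)$ is the same deterministic number, whereas $\TT_1-\TT_0$ differs by $n_0>0$. Because $h>0$ under the $\varepsilon_0$-uniformity assumption (cf.\ \cite[Theorem 3.7]{gilch:11}), the random variable $-\log L(o,\mathbf{W}_1|1)-(\TT_1-\TT_0)h$ takes two distinct values with positive probability, so $\bar\sigma_h^2>0$, completing the proof.
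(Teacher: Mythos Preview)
Your proposal is correct and follows essentially the same approach as the paper: the paper proves $\E[\widetilde\DD_1]=0$ via (\ref{equ:entropy-formula-expectations}), deduces $\bar\sigma_h^2<\infty$ from the exponential moments of $\DD_1$ and $\TT_1-\TT_0$, and for $\bar\sigma_h^2>0$ explicitly refers back to the construction in Lemma~\ref{lem:variance-lambda>0}, which is precisely the two-path argument you spell out.
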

\begin{proof}
We have
$$
\E\bigl[\widetilde\DD_1\bigr]= \E[\DD_1]- h \cdot \E[\TT_1-\TT_0]\stackrel{(\ref{equ:entropy-formula-expectations})}{=} \E[\DD_1]- \frac{\E[\DD_1]}{\E[\TT_1-\TT_0]} \cdot \E[\TT_1-\TT_0]=0,
$$
from which the proposed formula for $\bar\sigma_h^2$ follows. Furthermore,
since $\DD_1$ and $\TT_1-\TT_0$ have exponential moments, we have $\bar\sigma_h^2<\infty$. The same reasoning as in the proof of Lemma \ref{lem:variance-lambda>0} shows that $\bar\sigma_h^2>0$. 
\end{proof}

The analogous definitions of $\SS_k$ and $\widetilde \SS_k$ now become
$$
\SS_k:=\sum_{j=1}^k \DD_j = -\log L(o,X_{\TT_k}|1)-\log L(o,X_{\TT_0}|1)
$$
and
$$
\widetilde \SS_k:=\sum_{j=1}^k \widetilde \DD_j.
$$
We have then for $n\in\N$:
\begin{eqnarray*}
0&\leq & d_L(o, X_n) - \SS_{\mathbf{k}(n)} 
\leq -\log L(X_{\TT_{\mathbf{k}(n)}},X_n|1)-\log L(o,X_{\TT_0}|1)\\
&\leq& -\log \varepsilon_0^{n-\TT_{\mathbf{k}(n)}} -\log \varepsilon_0^{\TT_0} \leq (-\log\varepsilon_0)\cdot \bigl(\TT_{\mathbf{k}(n)+1}-\TT_{\mathbf{k}(n)}+\TT_0\bigr).
\end{eqnarray*}
Then we can show completely analogously to the proof of Lemma \ref{lem:overhang} that
\begin{equation}\label{equ:overhang-entropy}
\frac{d_L(o, X_n)  - \SS_{\mathbf{k}(n)}}{\sqrt{n}}\xrightarrow{\P} 0.
\end{equation}
Analogously to the reasoning in Section \ref{sec:CLT} we obtain
\begin{equation}\label{equ:convergence-entropy}
\frac{\widetilde\SS_{\mathbf{k}(n)}}{ \sigma_h \cdot \sqrt{n}} \xrightarrow{\mathcal{D}} N(0,1),
\end{equation}
where
\begin{equation}\label{equ:variance-h}
\sigma_h^2= \frac{\E\Bigl[\Bigl(-\log L(o,\mathbf{W}_1|1)-(\TT_1-\TT_0)\cdot h\Bigr)^2\Bigr]}{\E[\TT_1-\TT_0]}.
\end{equation}
It remains to prove the analogue of Proposition \ref{prop:overhang}:
\begin{Prop}\label{prop:overhang-entropy}
For every $\varepsilon >0$,
$$
\lim_{n\to\infty} \P\Bigl[ \Bigl|\widetilde \SS_{\mathbf{k}(n)} - \bigl( d_L(o, X_n) -n\cdot h\bigr) \Bigr|>\varepsilon \cdot  \sqrt{n}\Bigr] = 0.
$$
\end{Prop}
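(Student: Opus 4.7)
The plan is to imitate the proof of Proposition \ref{prop:overhang} with the graph distance $d(\cdot,\cdot)$ replaced by the distance function $d_L(\cdot,\cdot)$ and $\lambda$ replaced by $h$. The starting point is the identity
$$
\widetilde\SS_{\mathbf{k}(n)}-\bigl(d_L(o,X_n)-n\cdot h\bigr)
= -\bigl(d_L(o,X_n)-\SS_{\mathbf{k}(n)}\bigr)+h\cdot\bigl(n-(\TT_{\mathbf{k}(n)}-\TT_0)\bigr),
$$
which follows directly from $\widetilde\SS_{\mathbf{k}(n)}=\SS_{\mathbf{k}(n)}-(\TT_{\mathbf{k}(n)}-\TT_0)\cdot h$. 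By the triangle inequality it therefore suffices to show that each of the two summands on the right, divided by $\sqrt{n}$, converges to $0$ in probability.

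The first summand is controlled directly by (\ref{equ:overhang-entropy}), which already gives $\bigl(d_L(o,X_n)-\SS_{\mathbf{k}(n)}\bigr)/\sqrt{n}\xrightarrow{\P}0$; this is precisely where $\varepsilon_0$-uniformity is used (via the bound $d_L(o,X_n)-\SS_{\mathbf{k}(n)}\leq(-\log\varepsilon_0)(\TT_{\mathbf{k}(n)+1}-\TT_{\mathbf{k}(n)}+\TT_0)$). So the first term needs no further work.

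For the second summand, observe that by definition of $\mathbf{k}(n)$ we have $\TT_{\mathbf{k}(n)}\leq n<\TT_{\mathbf{k}(n)+1}$, and hence
$$
0\leq n-(\TT_{\mathbf{k}(n)}-\TT_0)\leq \TT_{\mathbf{k}(n)+1}-\TT_{\mathbf{k}(n)}+\TT_0.
$$
For any $\varepsilon>0$, splitting $\varepsilon\sqrt{n}/(2h)$ between the two summands yields
$$
\P\Bigl[h\bigl(n-(\TT_{\mathbf{k}(n)}-\TT_0)\bigr)>\tfrac{\varepsilon}{2}\sqrt{n},\,\mathbf{k}(n)\geq 1\Bigr]
\leq \P\Bigl[\exists k\in\{1,\dots,n\}: \TT_{k+1}-\TT_k>\tfrac{\varepsilon}{4h}\sqrt{n}\Bigr]
+\P\Bigl[\TT_0>\tfrac{\varepsilon}{4h}\sqrt{n}\Bigr].
$$
Applying the i.i.d.\ property from Proposition \ref{prop:Tk-Tk-1-iid}, a union bound, and Markov's inequality on the fourth power (exactly as in the proof of Proposition \ref{prop:overhang}) lets us bound the first probability by $n\cdot\E[(\TT_1-\TT_0)^4]/((\varepsilon/(4h))^4\cdot n^2)$ and the second by $\E[\TT_0]/((\varepsilon/(4h))\sqrt{n})$, both of which tend to $0$ since $\TT_1-\TT_0$ and $\TT_0$ have exponential moments by Propositions \ref{prop:T1-T0-exp-mom} and \ref{lem:T0-exp-mom}. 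Combined with $\mathbf{k}(n)\to\infty$ almost surely, this completes the argument.

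No genuine obstacle arises; the entire proof is a routine transcription of the proof of Proposition \ref{prop:overhang}. The only substantive difference is that the bound on the ``overhang'' $d_L(o,X_n)-\SS_{\mathbf{k}(n)}$ picks up a factor $-\log\varepsilon_0$ from the $\varepsilon_0$-uniformity assumption, but since this factor is harmlessly absorbed into (\ref{equ:overhang-entropy}), the rest of the reasoning goes through verbatim.
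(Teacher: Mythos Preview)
Your proposal is correct and follows essentially the same approach as the paper: both decompose $\widetilde\SS_{\mathbf{k}(n)}-(d_L(o,X_n)-nh)$ into the overhang term $d_L(o,X_n)-\SS_{\mathbf{k}(n)}$ (handled by (\ref{equ:overhang-entropy})) and the time term $h\bigl(n-(\TT_{\mathbf{k}(n)}-\TT_0)\bigr)$ (handled exactly as in Proposition \ref{prop:overhang} via the union bound and Markov's inequality on the fourth moment). The paper in fact only writes out the splitting and then says ``the rest follows as in the proof of Proposition \ref{prop:overhang}'', which is precisely what you carry out in detail.
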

\begin{proof}
Observe that
$$
\widetilde \SS_{\mathbf{k}(n)}=\SS_{\mathbf{k}(n)}  - (\TT_{\mathbf{k}(n)}-\TT_0)\cdot h.
$$
Therefore, for each $n\in\N$ and every $\varepsilon>0$, we get
\begin{eqnarray*}
&& \P\Bigl[ \Bigl|\widetilde \SS_{\mathbf{k}(n)}- \bigl( d_L(o, X_n) -n\cdot h\bigr) \Bigr| > \varepsilon\cdot \sqrt{n},\mathbf{k}(n)\geq 1 \Bigr] \\
&=& \P\Bigl[ \Bigl|  \SS_{\mathbf{k}(n)}  - (\TT_{\mathbf{k}(n)}-\TT_0)\cdot h- \bigl( d_L(o, X_n)  -n\cdot h\bigr) \Bigr| > \varepsilon\cdot \sqrt{n},\mathbf{k}(n)\geq 1 \Bigr] \\
&\leq & \P\Bigl[  \bigl| d_L(o, X_n)  - \SS_{\mathbf{k}(n)} \bigr|  > \frac{\varepsilon}{2}\cdot \sqrt{n},\mathbf{k}(n)\geq 1 \Bigr] \\
&&\quad + \P\Bigl[ h \cdot \bigl( n- (\TT_{\mathbf{k}(n)}-\TT_0) \bigr)  > \frac{\varepsilon}{2}\cdot \sqrt{n},\mathbf{k}(n)\geq 1 \Bigr].
\end{eqnarray*}
Once again, by   (\ref{equ:overhang-entropy}), $\lim_{n\to\infty} P\Bigl[  \bigl| d_L(o, X_n)  - \SS_{\mathbf{k}(n)} \bigr|  > \frac{\varepsilon}{2}\cdot \sqrt{n},\mathbf{k}(n)\geq 1 \Bigr]=0$.
The rest follows as in the proof of Proposition \ref{prop:overhang}.
\end{proof}

\begin{proof}[Proof of Theorem \ref{th:CLT-3}]
Proposition \ref{prop:overhang-entropy} and the convergence in (\ref{equ:convergence-entropy}) together with an application of the Lemma of Slutsky finally yield the proposed central limit theorem:
$$
\frac{d_L(o, X_n) -n\cdot h}{\sqrt{n}} = \underbrace{\frac{d_L(o, X_n) -\widetilde \SS_{\mathbf{k}(n)} - n\cdot h}{\sqrt{n}}}_{\xrightarrow{\P} 0} + \underbrace{\frac{\widetilde\SS_{\mathbf{k}(n)}}{\sqrt{n}}}_{\xrightarrow{\mathcal{D}} N(0, \sigma_h^2)}\xrightarrow{\mathcal{D}} N(0, \sigma_h^2).
$$
\end{proof}

It remains to prove Corollary \ref{cor:CLT-3}. From now on we consider free products of \textit{finite} graphs (that is, $V_1$ and $V_2$ are finite) and we assume that $P$ depends on finitely many parameters $p_1,\ldots,p_d$ only.
We have:
\begin{Lemma}\label{lem:logL-exp-mom}
$\E\bigl[-\log L(o,\mathbf{W}_1|1)\bigr]$ and $\E\bigl[-\log L(o,\mathbf{W}_1|1)(\TT_1-\TT_0)\bigr]$ vary real-analytically in $p_1,\ldots,p_d$.
\end{Lemma}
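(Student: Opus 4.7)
The plan is to exploit finiteness of $V_1$ and $V_2$ to reduce both expectations to \textit{finite} sums indexed by $(y_1, x_1) \in V_2^\times \times V_1^\times$, and then to verify real-analyticity of each summand. Since $|V_1|, |V_2| < \infty$, the random variable $\mathbf{W}_1$ ranges over the finite set $V_2^\times \times V_1^\times$, so
\begin{equation*}
\E\bigl[-\log L(o, \mathbf{W}_1|1)\bigr] = \sum_{y_1 \in V_2^\times, \, x_1 \in V_1^\times} \bigl(-\log L(o, y_1 x_1|1)\bigr) \cdot \P[\mathbf{W}_1 = y_1 x_1],
\end{equation*}
and analogously $\E\bigl[-\log L(o, \mathbf{W}_1|1)(\TT_1 - \TT_0)\bigr]$ is a finite sum with the weights $\P[\mathbf{W}_1 = y_1 x_1]$ replaced by $\E\bigl[(\TT_1 - \TT_0)\mathbf{1}_{\{\mathbf{W}_1 = y_1 x_1\}}\bigr]$. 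Hence it suffices to establish real-analyticity of the three ingredients $L(o, y_1 x_1|1)$, $\P[\mathbf{W}_1 = y_1 x_1]$ and $\E\bigl[(\TT_1 - \TT_0)\mathbf{1}_{\{\mathbf{W}_1 = y_1 x_1\}}\bigr]$ for each fixed $(y_1, x_1)$.

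For the first ingredient, since every path from $o$ to $y_1 x_1$ must pass through $y_1$, equations~(\ref{eq:L-L}) and (\ref{eq:G-xi}) together with Lemma~\ref{lem:cone-probabilities} yield
\begin{equation*}
L(o, y_1 x_1|1) = L_2(o_2, y_1|\xi_2) \cdot L_1(o_1, x_1|\xi_1),
\end{equation*}
where $\xi_i = \xi_i(1) < 1$. Finiteness of $V_i$ makes $L_i(o_i, \cdot|z) = G_i(o_i, \cdot|z)/G_i(o_i, o_i|z)$ a rational function of $z$ whose numerator and denominator polynomials have coefficients that are polynomial in $p_1,\ldots,p_d$ (via $(I - z P_i)^{-1}$). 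Since $\xi_i < 1$ lies strictly inside the radius of convergence of $L_i(o_i, \cdot|z)$, the evaluation $L_i(o_i, y_i|\xi_i)$ is real-analytic in the parameters. Moreover $\xi_i$ itself is real-analytic in $(p_1,\ldots,p_d) \in \mathcal{P}_d$ because the coefficients of $\xi_i(z)$ are polynomial in the parameters and the radius of convergence exceeds $1$ (using the same argument as in the proof of Proposition~\ref{prop:T1-T0-analytic}). Thus $L(o, y_1 x_1|1)$, a product of strictly positive real-analytic functions, is real-analytic, and so is $-\log L(o, y_1 x_1|1)$.

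For the weights, the decomposition in the proof of Proposition~\ref{prop:T1-T0-analytic} specializes to
\begin{equation*}
\P\bigl[\TT_1 - \TT_0 = n, \mathbf{W}_1 = y_1 x_1\bigr] = \P\bigl[X_{n-1} \notin C(y_1 x_1),\, X_n = y_1 x_1,\, \forall t < n: X_t \notin V_1^\times\bigr],
\end{equation*}
which has the form (\ref{equ:form-n-path}) as a polynomial of degree $n$ in $p_1,\ldots,p_d$ with non-negative coefficients. The associated power series in $z$ is term-by-term dominated by $\sum_n \P[\TT_1 - \TT_0 = n] z^n$, which by Proposition~\ref{prop:T1-T0-exp-mom} has radius of convergence strictly greater than $1$. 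Consequently, for sufficiently small $\delta>0$, both $\sum_n \P[\TT_1 - \TT_0 = n, \mathbf{W}_1 = y_1 x_1] (1+\delta)^n$ and $\sum_n n \cdot \P[\TT_1 - \TT_0 = n, \mathbf{W}_1 = y_1 x_1] (1+\delta)^n$ are finite, and the same reasoning as in Propositions~\ref{prop:T1-T0-analytic} and \ref{prop:Dk-iid}.(iii) shows that their values at $\delta = 0$, namely $\P[\mathbf{W}_1 = y_1 x_1]$ and $\E\bigl[(\TT_1 - \TT_0)\mathbf{1}_{\{\mathbf{W}_1 = y_1 x_1\}}\bigr]$, depend real-analytically on $(p_1,\ldots,p_d) \in \mathcal{P}_d$.

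Assembling the three paragraphs, each of the two expectations in the lemma is a finite sum of products of real-analytic functions on $\mathcal{P}_d$, hence itself real-analytic. The main obstacle, conceptually, is the interaction of parameter dependence with the evaluation at the inner point $z = \xi_i$: one must ensure that $\xi_i$ stays strictly below the radius of convergence of $L_i(o_i, \cdot|z)$ as the parameters vary locally in $\mathcal{P}_d$. This is however guaranteed by continuity of $\xi_i(\cdot)$ together with $\xi_i < 1$ and the basic assumption $\varrho < 1$, both of which survive sufficiently small perturbations of the parameters.
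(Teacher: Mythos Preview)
Your proof is correct and follows essentially the same approach as the paper: both decompose the expectations according to the finitely many values of $\mathbf{W}_1$, both use the path-based representation $\P[\mathbf{W}_1=y_1x_1,\TT_1-\TT_0=n]$ in the form~(\ref{equ:form-n-path}), and both invoke the exponential-moment bound on $\TT_1-\TT_0$ to control the resulting series.

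One minor stylistic difference is worth noting. For the real-analyticity of $L(o,y_1x_1|1)$ you go through the factorization $L_2(o_2,y_1|\xi_2)\cdot L_1(o_1,x_1|\xi_1)$, argue that the $L_i$ are rational functions because $V_i$ is finite, and then worry about whether $\xi_i$ stays inside the region of analyticity of $L_i$ under parameter perturbation. The paper instead uses the simpler direct observation that $L(o,y_1x_1|z)$ is a power series in $z$ with coefficients polynomial in $p_1,\ldots,p_d$ and with radius of convergence strictly bigger than $1$, so its value at $z=1$ is real-analytic in the parameters; no composition with $\xi_i$ is needed. Your final paragraph about the ``main obstacle'' is therefore an artifact of the route you chose rather than an intrinsic difficulty. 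Both arguments are valid, but the paper's is shorter.
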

\begin{proof}
We follow the reasoning as in the proof of Proposition \ref{prop:Dk-iid}.(iii). For $n\in\N$ and $x_1\in V_1^\times$, $y_1\in V_1^\times$,
we  rewrite as in the calculations in the proof of Proposition \ref{prop:T1-T0-analytic}:
$$
\P\Bigl[\mathbf{W}_1=y_1x_1,\TT_1-\TT_0=n\Bigr] = \P\left[\begin{array}{c} X_{n-1}\notin C(y_1x_1),\\ X_n=y_1x_1,\\ \forall t<n: X_t\notin V_1^\times\end{array}\right].
$$
The summands in the right sum depend only on paths of length $n\in\N$. Therefore, we can rewrite $\P\bigl[\mathbf{W}_1=y_1x_1,\TT_1-\TT_0=n\bigr]$ as a sum
$$
\sum_{\substack{n_1,\ldots,n_d\geq 0:\\ n_1+\ldots + n_d=n }} c(n_1,\ldots,n_d)\cdot p_1^{n_1}\cdot \ldots \cdot p_d^{n_d},
$$
where $c(n_1,\ldots,n_d)\in\N$. For sufficiently small $\delta>0$, we have then
$$
\sum_{\substack{n\in\N,\\ x_1\in V_1^\times, y_1\in V_2^\times}} \P\left[ \begin{array}{c} \mathbf{W}_1=y_1x_1,\\ \TT_1-\TT_0=n\end{array}\right]\cdot (1+\delta)^n \leq \sum_{n\in\N} \P\bigl[\TT_1-\TT_0=n\bigr]\cdot (1+\delta)^n<\infty.
$$
Fix now any $\underline{p}=(p_1,\ldots,p_d)\in \mathcal{P}_d$. For $x_1\in V_1^\times, y_1\in V_2^\times$, the function $L(o,y_1x_1|1)$ is obviously  real-analytic in $\underline{p}$, since $L(o,y_1x_1|z)$ has radius of convergence strictly bigger than $1$. Since $L(o,y_1x_1|1)$ is continuous in $\underline{p}$ and due to \mbox{$L(o,y_1x_1|1)\geq \varepsilon_0^N$,} where $N:=|V_1|+|V_2|$, there exists a complex neighbourhood $\mathcal{U}(\underline{p})$ of $\underline{p}$, where $L(o,y_1x_1|1)$ does not take values in $(-\infty,0]$. Due to finiteness of $V_1$ and $V_2$ this neighbourhood $\mathcal{U}(\underline{p})$ can be choosen independently of $x_1,y_1$. In particular, $-\log L(o,y_1x_1|1)$ varies real-analytically on $\mathcal{U}(\underline{p})$.
This in turn implies that $\bigl|-\log L(o,y_1x_1|1)\Bigr|$, when seen as a function in $p_1,\ldots,p_d$, is uniformly bounded on $\mathcal{U}(\underline{p})$ by some constant $M_0$. Then
\begin{eqnarray*}
&& \sum_{\substack{n\in\N,\\ x_1\in V_1^\times,y_1\in V_2^\times}}  |-\log L(o,y_1x_1|1)|\cdot \P\left[\begin{array}{c}\mathbf{W}_1=y_1x_1,\\ \TT_1-\TT_0=n\end{array}\right]\cdot (1+\delta)^n \\
&\leq & \frac{\partial}{\partial z}\biggl[ \sum_{\substack{n\in\N,\\ x_1\in V_1^\times,y_1\in V_2^\times}} M_0 \cdot  \P\left[\begin{array}{c}\mathbf{W}_1=y_1x_1,\\ \TT_1-\TT_0=n\end{array}\right]\cdot z^n\biggr]\Biggl|_{z=1+\delta}<\infty,
\end{eqnarray*}
and together with the same reasoning as in the proof of Proposition \ref{prop:Dk-iid}.(iii)
we obtain that $\E[\DD_1]=\E\bigl[-\log L(o,\mathbf{W}_1|1)\bigr]$ varies real-analytically in the random walk parameters $(p_1,\ldots,p_d)\in\mathcal{P}_d$.

Real-analyticity of $\E\bigl[  -\log L(o,\mathbf{W}_1|1) (\TT_1-\TT_{0})\bigr]$ follows  due to
\begin{eqnarray*}
&& \sum_{\substack{n\in\N,\\ x_1\in V_1^\times, y_1\in V_2^\times}} \bigl|-\log L(o,y_1x_1|1)\bigr|\cdot n\cdot \P\left[\begin{array}{c}\mathbf{W}_1=y_1x_1,\\ \TT_1-\TT_0=n\end{array}\right]\cdot (1+\delta)^n \\
&\leq & M_0\cdot \sum_{n\in\N} n \cdot \P\bigl[\TT_1-\TT_0=n\bigr]\cdot (1+\delta)^n <\infty,
\end{eqnarray*}
and with exponential moments of $\TT_1-\TT_0$ together with the fact that we can rewrite $\P\bigl[\TT_1-\TT_0=n\bigr]$  in the form (\ref{equ:n-form}).
\end{proof}

\begin{proof}[Proof of Corollary \ref{cor:CLT-3}]
In view of  formula  (\ref{equ:variance-h}) and real-analyticity of the expectation $\E[\TT_1-\TT_0]$,
it suffices to prove that the mapping
$$
\mathcal{P}_d\ni (p_1,\ldots,p_d)\mapsto \E\Bigl[ \Bigl( -\log L(o,\mathbf{W}_1|1)  -(\TT_1-\TT_{0}) \cdot h\Bigr)^2\Bigr]
$$
varies real-analytically. By Lemma \ref{lem:logL-exp-mom}, 
$$
\E\bigl[-\log L(o,\mathbf{W}_1|1)\bigr]\quad \textrm{ and }\quad \E\bigl[-\log L(o,\mathbf{W}_1|1)(\TT_1-\TT_0)\bigr]
$$ 
vary real-analytically in $p_1,\ldots,p_d$. Following the same reasoning as above, we can see that the mapping
$$
\mathcal{P}_d\ni (p_1,\ldots,p_d)\mapsto \E\bigl[\bigl(-\log L(o,\mathbf{W}_1|1)\bigr)^2\bigr]
$$
varies real-analytically, since for sufficiently small $\delta>0$ we have
\begin{eqnarray*}
&&\sum_{\substack{n\geq 1,\\ x_1\in V_1^\times, y_1\in V_2^\times}} \bigl| -\log L(o,y_1x_1|1)\bigr|^2\cdot \P\left[\begin{array}{c}\mathbf{W}_1=y_1x_1,\\ \TT_1-\TT_0=n\end{array}\right]\cdot (1+\delta)^{n} \\
&=& M_0^2\cdot  \biggl[\sum_{n\geq 1}   \P\bigl[\TT_1-\TT_0=n\bigr]\cdot z^{n} \biggr]\Biggl|_{z=1+\delta} <\infty.
\end{eqnarray*}
where the constant $M_0$ is as in the proof of Lemma \ref{lem:logL-exp-mom}.

This shows that $\sigma_h^2$ varies real-analytically in $(p_1,\ldots,p_d)\in\mathcal{P}_d$, which
  finishes the proof of Corollay \ref{cor:CLT-3}.
\end{proof}

\begin{rem}
In Corollary \ref{cor:CLT-3} we restricted ourselves to the case of free products of \textit{finite} graphs. This lies in the fact that the terms 
$$
-\log L(o,y_1x_1|1),\quad x_1\in V_1^\times, Y_1\in V_2^\times
$$ 
do also depend on the parameters $p_1\ldots,p_d$. In the case of \textit{infinite} free factors existence of a uniform neighbourhood $\mathcal{U}(\underline{p})$ in the proof of Lemma \ref{lem:logL-exp-mom} can not be guaranteed a-priori. For answering the question of real-analyticity of $\sigma_h^2$ in the infinite case a deeper analysis of the functions 
$$
\mathcal{P}_d \ni (p_1,\ldots,p_d)\mapsto -\log L(o,y_1x_1|1)
$$ 
would be necessary, which goes far beyond the scope of this article.
\end{rem}

\vfill

\pagebreak[4]
\appendix

\section{Proof of Proposition \ref{prop:Dk-iid}}
\label{app:proof-Dk-iid}

\begin{proof}[Proof of Proposition \ref{prop:Dk-iid}.(i)]
First, we show that $\DD_k$, $k\in\N$, has the same distribution as $\DD_1$. Recall that $\mathcal{W}_k$ denotes the support of $X_{\TT_k}$. Let be $n\in\N$. Then:
\begin{eqnarray*}
&& \P\bigl[  \DD_k=n\bigr] \\[1ex]
&=& \sum_{x\in \mathcal{W}_{k-1}, x_1\in V_1^\times, y_1\in V_2^\times: d(o,y_1x_1) =n} \P\bigl[ X_{\TT_{k-1}}=x, X_{\TT_k}=xy_1x_1\bigr] \\
&=& \sum_{\substack{l\in\N,\\ x\in \mathcal{W}_{k-1}}} \P\left[\begin{array}{c} X_{l-1}\notin C(x),\\ X_l=x\end{array}\right] \\
&&\quad 
\cdot \sum_{\substack{m\in\N,\\ x_1\in V_1^\times, y_1\in V_2^\times:\\ d(o,y_1x_1) =n}} \P_x\left[\begin{array}{c} X_{m-1}\notin C(xy_1x_1), \\ X_m=xy_1x_1,\\ \forall m'<m: X_{m'}\in C(x) \end{array}\right]\cdot (1-\xi_1)\\
&\stackrel{\textrm{Lemma \ref{lem:cone-probabilities}}}{=}&\underbrace{\sum_{l\in\N, x\in \mathcal{W}_{k-1}} \P\left[\begin{array}{c} X_{l-1}\notin C(x),\\ X_l=x\end{array}\right] \cdot (1-\xi_1)}_{=\P[\TT_{k-1}<\infty]=1}\\
&&\quad \cdot \sum_{\substack{m\in\N,\\ x_1\in V_1^\times, y_1\in V_2^\times:\\ d(o,y_1x_1)=n}} \P\left[\begin{array}{c} X_{m-1}\notin C(y_1x_1),\\ X_m=y_1x_1,\\ \forall m'<m: X_{m'}\notin V_1^\times\end{array}\right]\\
&=& \underbrace{\sum_{l\in\N, x\in \mathcal{W}_0} \P\left[\begin{array}{c} X_{l-1}\notin C(x),\\ X_l=x\end{array}\right]\cdot (1-\xi_1)}_{=\P[\TT_0<\infty]=1} \\
&&\quad \cdot \sum_{\substack{m\in\N,\\ x_1\in V_1^\times, y_1\in V_2^\times:\\ d(o,y_1x_1) =n}} \P\left[\begin{array}{c} X_{m-1}\notin C(y_1x_1),\\ X_m=y_1x_1,\\ \forall m'<m: X_{m'}\notin V_1^\times \end{array}\right]\\[1ex]
&\stackrel{\textrm{Lemma \ref{lem:cone-probabilities}}}{=}& \sum_{\substack{l\in\N,\\ x\in \mathcal{W}_0}} \P\left[\begin{array}{c} X_{l-1}\notin C(x),\\ X_l=x\end{array}\right] \\
&&\quad \cdot \sum_{\substack{m\in\N,\\ x_1\in V_1^\times, y_1\in V_2^\times:\\ d(o,y_1x_1) =n}} \P_x\left[\begin{array}{c} X_{m-1}\notin C(xy_1x_1),\\ X_m=xy_1x_1,\\ \forall m'<m: X_{m'}\in C(x) \end{array}\right]\cdot (1-\xi_1)\\[1ex]
&=& \sum_{\substack{x\in \mathcal{W}_{0}, x_1\in V_1^\times, y_1\in V_2^\times: \\ d(o,y_1x_1) =n}} \P\bigl[ X_{\TT_{0}}=x, X_{\TT_1}=xy_1x_1\bigr]\\
&=& \P\bigl[ \DD_1=n\bigr].
\end{eqnarray*}
This shows that the sequence $\bigl( \DD_k\bigr)_{k\in\N}$ is identically distributed.
\par
The proof of independence of $\bigl( \DD_k\bigr)_{k\in\N}$ follows the same reasoning as in Proposition \ref{prop:Tk-Tk-1-iid}, from which we take the notation of $w_0:=o,w_1,\ldots,w_k$. Let be $k\in\N$ and $n_1,\ldots,n_k\in\N$.  For $j\in\{1,\ldots,k\}$, we can write
\begin{eqnarray*}
\P[\DD_j=n_j] &=&\sum_{\substack{t\in\N,\\ w\in \mathcal{W}_{j-1}}} \P\left[\begin{array}{c} X_{t-1}\notin C(w),\\ X_t=w\end{array}\right] \\
&&\cdot 
\sum_{\substack{m_j\in \N,\\ x_j\in V_1^\times, y_j\in V_2^\times:\\ d(o,y_jx_j)= n_j}}  \P_{w}\left[\begin{array}{c} X_{m_j-1}\notin C(wy_jx_j), \\ X_{m_j}=wy_jx_j,\\ \forall m'<m_j: X_{m'}\in C(w) \end{array}\right] \cdot (1-\xi_1).
\end{eqnarray*}
Then independence is obtained as follows:
\begin{eqnarray*}
&& \P\bigl[  \DD_1=n_1,\ldots, \DD_k=n_k\bigr]\\[1ex]
&=& \sum_{\substack{x\in \mathcal{W}_0,\\ x_1,\ldots, x_k\in V_1^\times,\\ y_1,\ldots,y_k\in V_2^\times:\\ d(o,y_ix_i) = n_i}} \P\Bigl[X_{\TT_0}=x,X_{\TT_1}=xy_1x_1,\ldots,X_{\TT_k}=xy_1x_1\ldots y_kx_k\Bigr]\\
&=& \sum_{\substack{l\in\N,\\ x\in \mathcal{W}_0}} \P\left[\begin{array}{c} X_{l-1}\notin C(x),\\ X_l=x\end{array}\right] \\
&&\quad \cdot \sum_{\substack{m_1,\ldots, m_k\in\N,\\ x_1,\ldots, x_k\in V_1^\times,\\ y_1,\ldots,y_k\in V_2^\times:\\ d(o,y_ix_i) = n_i}} \prod_{j=1}^k \P_{xw_{j-1}}\left[\begin{array}{c} X_{m_j-1}\notin C(xw_j), \\ X_{m_j}=xw_j,\\ \forall m'<m_j: X_{m'}\in C(xw_{j-1}) \end{array}\right]\cdot (1-\xi_1)\\
&\stackrel{\textrm{Lemma \ref{lem:cone-probabilities}}}{=}& \sum_{\substack{l\in\N,\\ x\in \mathcal{W}_0}} \P\left[\begin{array}{c} X_{l-1}\notin C(x),\\ X_l=x\end{array}\right] \cdot
\sum_{\substack{m_1\in\N,\\ x_1\in V_1^\times, y_1\in V_2^\times:\\ d(o,y_1x_1)=n_1}} \P_x\left[\begin{array}{c} X_{m_1-1}\notin C(xy_1x_1), \\ X_{m_1}=xy_1x_1,\\ \forall m'<m_1: X_{m'}\in C(x) \end{array}\right]\\
&&\quad \cdot \sum_{\substack{m_2,\ldots,m_k\in\N,\\ x_2,\ldots, x_k\in V_1^\times,\\ y_2,\ldots,y_k\in V_2^\times:\\ d(o,y_ix_i)= n_i}} \prod_{j=2}^k 
\underbrace{\sum_{\substack{t\in\N,\\ w\in \mathcal{W}_{j-1}}} \P\left[\begin{array}{c} X_{t-1}\notin C(w),\\ X_t=w\end{array}\right] \cdot (1-\xi_1)}_{=\P[\TT_{j-1}<\infty]=1}\\
&& \quad\quad \cdot \P\left[\begin{array}{c} X_{m_j-1}\notin C(y_jx_j), \\ X_{m_j}=y_jx_j,\\ \forall m'<m_j: X_{m'}\notin V_1^\times  \end{array}\right]\cdot (1-\xi_1)
\end{eqnarray*}
\begin{eqnarray*}
&\stackrel{\textrm{Lemma \ref{lem:cone-probabilities}}}{=}& \sum_{\substack{l\in\N,\\ x\in \mathcal{W}_0}} \P\left[\begin{array}{c} X_{l-1}\notin C(x),\\ X_l=x\end{array}\right] \\
&&\quad \cdot
\sum_{\substack{m_1\in\N,\\ x_1\in V_1^\times, y_1\in V_2^\times}} \P_x\left[\begin{array}{c} X_{m_1-1}\notin C(xy_1x_1), \\ X_{m_1}=xy_1x_1,\\ \forall m'<m_1: X_{m'}\in C(x) \end{array}\right]\cdot (1-\xi_1)\\
&&\quad\quad \cdot \prod_{j=2}^k \Biggl( \sum_{\substack{m_j\in \N,\\ x_j\in V_1^\times, y_j\in V_2^\times:\\ d(o,y_jx_j)= n_j}}  
\sum_{\substack{t\in\N,\\ w\in \mathcal{W}_{j-1}}} \P\left[\begin{array}{c} X_{t-1}\notin C(w),\\ X_t=w\end{array}\right]  \\
&&\quad\quad\quad \cdot \P_{wy_{j-1}x_{j-1}}\left[\begin{array}{c} X_{m_j-1}\notin C(wy_jx_j), \\ X_{m_j}=wy_jx_j,\\ \forall m'<m_j: X_{m'}\in C(w) \end{array}\right] \cdot (1-\xi_1)\Biggr)\\
&=& \prod_{j=1}^k \P\bigl[  \DD_j=n_j\bigr].
\end{eqnarray*}
This proves independence of $\bigl( \DD_k\bigr)_{k\in\N}$.
\end{proof}


\begin{proof}[Proof of Proposition \ref{prop:Dk-iid}.(ii)]
The proof works completely analogously to the proof of Proposition \ref{prop:Tk-Tk-1-iid} and \ref{prop:Dk-iid}.(i); nonetheless, we sketch the proof for sake of completeness. Once again, we start with showing that $\widetilde \DD_k$, $k\in\N$, has the same distribution as $\widetilde \DD_1$. Observe that $\widetilde \DD_k$ is a discrete random variable with support
$$
\widetilde{\mathcal{W}}_k := \Bigl\{ z\in\mathbb{R} \,\Bigl|\, \P\bigl[\widetilde \DD_k=z\bigr]>0\Bigr\}.
$$
Recall also the definition of $\mathcal{W}_k$, the support of $X_{\TT_k}$.
For $z\in\widetilde{\mathcal{W}}_k$, we have then:
\begin{eqnarray*}
&& \P\bigl[ \widetilde \DD_k=z\bigr] \\[1ex]
&=& \sum_{\substack{m\in\N, x\in \mathcal{W}_{k-1},x_1\in V_1^\times,y_1\in V_2^\times:\\ d(o,y_1x_1)-m\cdot \lambda=z}} \P\bigl[ X_{\TT_{k-1}}=x, X_{\TT_k}=xy_1x_1, \TT_k-\TT_{k-1}=m\bigr]\\
&=& \sum_{\substack{l\in\N,\\ x\in \mathcal{W}_{k-1}}} \P\left[\begin{array}{c} X_{l-1}\notin C(x),\\ X_l=x\end{array}\right] \\
&&\quad 
\cdot \sum_{\substack{m\in\N,\\ x_1\in V_1^\times, y_1\in V_2^\times:\\ d(o,y_1x_1)-m\cdot\lambda =z}} \P_x\left[\begin{array}{c} X_{m-1}\notin C(xy_1x_1), \\ X_m=xy_1x_1,\\ \forall m'<m: X_{m'}\in C(x) \end{array}\right]\cdot (1-\xi_1)
\end{eqnarray*}
\begin{eqnarray*}
&\stackrel{\textrm{Lemma \ref{lem:cone-probabilities}}}{=}&\underbrace{\sum_{\substack{l\in\N,\\ x\in \mathcal{W}_{k-1}}} \P\left[\begin{array}{c} X_{l-1}\notin C(x),\\ X_l=x\end{array}\right] \cdot (1-\xi_1)}_{=\P[\TT_{k-1}<\infty]=1}\\
&&\quad \cdot \sum_{\substack{m\in\N,\\ x_1\in V_1^\times, y_1\in V_2^\times:\\ d(o,y_1x_1)-m\cdot \lambda=z}} \P\left[\begin{array}{c} X_{m-1}\notin C(y_1x_1),\\ X_m=y_1x_1,\\ \forall m'<m: X_{m'}\notin V_1^\times\end{array}\right]\\
&=&\underbrace{\sum_{\substack{l\in\N,\\ x\in \mathcal{W}_{0}}} \P\left[\begin{array}{c} X_{l-1}\notin C(x),\\ X_l=x\end{array}\right] \cdot (1-\xi_1)}_{=\P[\TT_{0}<\infty]=1}\\
&&\quad \cdot \sum_{\substack{m\in\N,\\ x_1\in V_1^\times, y_1\in V_2^\times:\\ d(o,y_1x_1)-m\cdot \lambda=z}} \P\left[\begin{array}{c} X_{m-1}\notin C(y_1x_1),\\ X_m=y_1x_1,\\ \forall m'<m: X_{m'}\notin V_1^\times\end{array}\right]\\
&\stackrel{\textrm{Lemma \ref{lem:cone-probabilities}}}{=}& \sum_{\substack{l\in\N,\\ x\in \mathcal{W}_0}} \P\left[\begin{array}{c} X_{l-1}\notin C(x),\\ X_l=x\end{array}\right] \\
&&\quad \cdot \sum_{\substack{m\in\N,\\ x_1\in V_1^\times, y_1\in V_2^\times:\\ d(o,y_1x_1)-m\cdot\lambda =z}} \P_x\left[\begin{array}{c} X_{m-1}\notin C(xy_1x_1),\\ X_m=xy_1x_1,\\ \forall m'<m: X_{m'}\in C(x) \end{array}\right]\cdot (1-\xi_1)\\[1ex]
&=& \sum_{\substack{m\in\N, x\in \mathcal{W}_{0},x_1\in V_1^\times,y_1\in V_2^\times:\\ d(o,y_1x_1)-m\cdot \lambda=z}} \P\bigl[ X_{\TT_{0}}=x, X_{\TT_1}=xy_1x_1, \TT_1-\TT_{0}=m\bigr]\\
&=& \P\bigl[\widetilde \DD_1=z\bigr].
\end{eqnarray*}
This shows that the sequence $\bigl(\widetilde \DD_k\bigr)_{k\in\N}$ is identically distributed.
\par
The proof of independence of $\bigl(\widetilde \DD_k\bigr)_{k\in\N}$ follows the same reasoning as in the proof of Proposition \ref{prop:Dk-iid}.(i);   we use again the notion of $w_0=o,w_1\ldots,w_k$. Let be $k\in\N$ and $z_1,\ldots,z_k\in\widetilde{\mathcal{W}}_1$.  Observe that, for $j\in\{1,\ldots,k\}$,
\begin{eqnarray*}
&&\P\bigl[\widetilde \DD_j=z_j\bigr] \\
&=&\sum_{\substack{t\in\N,\\ w\in \mathcal{W}_{j-1}}} \P\left[\begin{array}{c} X_{t-1}\notin C(w),\\ X_t=w\end{array}\right] \\
&&\quad \cdot \sum_{\substack{m_j\in \N,\\ x_j\in V_1^\times, y_j\in V_2^\times:\\ d(o,y_jx_j)-m_j\cdot \lambda = z_j}} \P_{w}\left[\begin{array}{c} X_{m_j-1}\notin C(wy_jx_j), \\ X_{m_j}=wy_jx_j,\\ \forall m'<m_j: X_{m'}\in C(w) \end{array}\right] \cdot (1-\xi_1).
\end{eqnarray*}
Then:
\begin{eqnarray*}
&& \P\Bigl[ \widetilde \DD_1=z_1,\ldots,\widetilde \DD_k=z_k\Bigr]\\[1ex]
&=& \sum_{\substack{m_1,\ldots, m_k\in\N,\\ x\in\mathcal{W}_0,\\ x_1,\ldots, x_k\in V_1^\times,\\ y_1,\ldots,y_k\in V_2^\times:\\ d(o,y_ix_i)-m_i\cdot \lambda = z_i}}\P\left[ \begin{array}{c}X_{\TT_0}=x,X_{\TT_1}=xy_1x_1,\ldots,X_{\TT_k}=xy_1x_1 \ldots y_kx_k,\\ \TT_1-\TT_0=m_1,\ldots, \TT_k-\TT_{k-1}=m_k\end{array}\right]\\
&=& \sum_{\substack{l\in\N,\\ x\in \mathcal{W}_0}} \P\left[\begin{array}{c} X_{l-1}\notin C(x),\\ X_l=x\end{array}\right] \\
&&\quad \cdot \sum_{\substack{m_1,\ldots, m_k\in\N,\\ x_1,\ldots, x_k\in V_1^\times,\\ y_1,\ldots,y_k\in V_2^\times:\\ d(o,y_ix_i)-m_i\cdot \lambda = z_i}} \prod_{j=1}^k \P_{xw_{j-1}}\left[\begin{array}{c} X_{m_j-1}\notin C(xw_j), \\ X_{m_j}=xw_j,\\ \forall m'<m_j: X_{m'}\in C(xw_{j-1}) \end{array}\right]\cdot (1-\xi_1)\\
&\stackrel{\textrm{Lemma \ref{lem:cone-probabilities}}}{=}& \sum_{\substack{l\in\N,\\ x\in \mathcal{W}_0}} \P\left[\begin{array}{c} X_{l-1}\notin C(x),\\ X_l=x\end{array}\right] \cdot
\sum_{\substack{m_1\in\N,\\ x_1\in V_1^\times, y_1\in V_2^\times:\\ d(o,y_1x_1)-m_1\cdot \lambda=z_1}} \P_x\left[\begin{array}{c} X_{m_1-1}\notin C(xy_1x_1), \\ X_{m_1}=xy_1x_1,\\ \forall m'<m_1: X_{m'}\in C(x) \end{array}\right]\\
&&\quad \cdot \sum_{\substack{m_2,\ldots,m_k\in\N,\\ x_2,\ldots, x_k\in V_1^\times,\\ y_2,\ldots,y_k\in V_2^\times:\\ d(o,y_ix_i)-m_i\cdot \lambda = z_i}} \prod_{j=2}^k 
\underbrace{\sum_{\substack{t\in\N,\\ w\in \mathcal{W}_{j-1}}} \P\left[\begin{array}{c} X_{t-1}\notin C(w),\\ X_t=w\end{array}\right] \cdot (1-\xi_1)}_{=\P[\TT_{j-1}<\infty]=1}\\
&& \quad\quad \cdot \P\left[\begin{array}{c} X_{m_j-1}\notin C(y_jx_j), \\ X_{m_j}=y_jx_j,\\ \forall m'<m_j: X_{m'}\notin V_1^\times \end{array}\right]\cdot (1-\xi_1)\\
&\stackrel{\textrm{Lemma \ref{lem:cone-probabilities}}}{=}& \sum_{\substack{l\in\N,\\ x\in \mathcal{W}_0}} \P\left[\begin{array}{c} X_{l-1}\notin C(x),\\ X_l=x\end{array}\right] \cdot
\sum_{\substack{m_1\in\N,\\ x_1\in V_1^\times,\\ y_1\in V_2^\times}} \P_x\left[\begin{array}{c} X_{m_1-1}\notin C(xy_1x_1), \\ X_{m_1}=xy_1x_1,\\ \forall m'<m_1: X_{m'}\in C(x) \end{array}\right]\cdot (1-\xi_1)\\
&&\quad \cdot \prod_{j=2}^k \Biggl(\sum_{\substack{m_j\in \N,\\ x_j\in V_1^\times, y_j\in V_2^\times:\\ d(o,y_jx_j)-m_j\cdot \lambda = z_j}}  
\sum_{\substack{t\in\N,\\ w\in \mathcal{W}_{j-1}}} \P\left[\begin{array}{c} X_{t-1}\notin C(w),\\ X_t=w\end{array}\right] \\
&&\quad\quad \cdot \P_{w}\left[\begin{array}{c} X_{m_j-1}\notin C(wy_jx_j), \\ X_{m_j}=wy_jx_j,\\ \forall m'<m_j: X_{m'}\in C(w) \end{array}\right] \cdot (1-\xi_1)\Biggr)\\
&=& \prod_{j=1}^k \P\Bigl[ \widetilde \DD_j=z_j\Bigr].
\end{eqnarray*}
This proves independence of $\bigl(\widetilde \DD_k\bigr)_{k\in\N}$.
\end{proof}

\bibliographystyle{abbrv}
\bibliography{literatur}

\begin{thebibliography}{10}

\bibitem{billingsley:99}
P.~Billingsley.
\newblock {\em Convergence of Probability Measures}.
\newblock Wiley, 1999.

\bibitem{candellero-gilch}
E.~Candellero and L.~Gilch.
\newblock Phase transitions for random walk asymptotics on free products of
  groups.
\newblock {\em Random Structures \& Algorithms}, 40(2):150--181, 2009.

\bibitem{cartwright-soardi}
D.~Cartwright and P.~Soardi.
\newblock Random walks on free products, quotients and amalgams.
\newblock {\em Nagoya Math. J.}, 102:163--180, 1986.

\bibitem{gerl-woess}
P.~Gerl and W.~Woess.
\newblock Local limits and harmonic functions for nonisotropic random walks on
  free groups.
\newblock {\em Probab. Theory Rel. Fields}, 71:341--355, 1986.

\bibitem{gilch}
L.~A. Gilch.
\newblock {\em Rate of Escape of Random Walks}.
\newblock PhD thesis, University of Technology Graz, Austria, 2007.

\bibitem{gilch:07}
L.~A. Gilch.
\newblock Rate of escape of random walks on free products.
\newblock {\em J. Aust. Math. Soc.}, 83(I):31--54, 2007.

\bibitem{gilch:11}
L.~A. Gilch.
\newblock Asymptotic entropy of random walks on free products.
\newblock {\em Electron. J. Probab.}, 16:76--105, 2011.

\bibitem{gilch:22}
L.~A. Gilch.
\newblock Range of random walks on free products.
\newblock {\em Stochastic Processes and their Applications}, 149:369--403,
  2022.

\bibitem{lalley93}
S.~Lalley.
\newblock Finite range random walk on free groups and homogeneous trees.
\newblock {\em Ann. Probab.}, 21(4):2087--2130, 1993.

\bibitem{lalley:04}
S.~P. Lalley.
\newblock Algebraic systems of generating functions and return probabilities
  for random walks.
\newblock In {\em Dynamics and Randomness II}, pages 81--122, Dordrecht, 2004.
  Springer Netherlands.

\bibitem{mairesse1}
J.~Mairesse and F.~Math\'eus.
\newblock Random walks on free products of cyclic groups.
\newblock {\em J. London Math. Soc.}, 75(1):47--66, 2007.

\bibitem{sawyer78}
S.~Sawyer.
\newblock Isotropic random walks in a tree.
\newblock {\em Zeitschrift f. Wahrscheinlichkeitstheorie}, Verw. Geb.
  42:279--292, 1978.

\bibitem{sidoravicius:18}
Z.~Shi, V.~Sidoravicius, H.~Song, L.~Wang, and K.~Xiang.
\newblock On spectral radius of biased random walks on infinite graphs.
\newblock {\em arXiv, https://arxiv.org/abs/1805.01611}, 2018.

\bibitem{stallings:71}
J.~Stallings.
\newblock {\em Group theory and three-dimensional manifolds}.
\newblock Yale Mathematical Monographs, Yale University Press, New Haven,
  Conn.-London, a {J}ames {K}. {W}hittemore {L}ecture in {M}athematics given at
  {Y}ale {U}niversity, 1969 edition, 1971.

\bibitem{woess3}
W.~Woess.
\newblock Nearest neighbour random walks on free products of discrete groups.
\newblock {\em Boll. Un. Mat. Ital.}, 5-B:961--982, 1986.

\bibitem{woess}
W.~Woess.
\newblock {\em Random Walks on Infinite Graphs and Groups}.
\newblock Cambridge University Press, 2000.

\end{thebibliography}

\end{document}